\documentclass{amsart}
\usepackage[utf8]{inputenc}  
\usepackage[T1]{fontenc}     
\usepackage{lmodern}
\usepackage{amssymb, graphics, color, enumitem, mathrsfs}
\usepackage[all]{xy}
\usepackage{hyperref}
\usepackage[cyr]{aeguill}
\usepackage{amsmath}
\usepackage[margin=1in]{geometry}

\newtheorem{theo}{Theorem}[section]
\newtheorem{mydef}[theo]{Definition}
\newtheorem{myexmp}[theo]{Example}
\newtheorem{cor}[theo]{Corollary}
\newtheorem{prop}[theo]{Proposition}
\newtheorem{lemm}[theo]{Lemma}
\newtheorem{remq}[theo]{Remark}
\newtheorem{eg}[theo]{Example}

\newtheorem*{Acknowledgement}{Acknowledgements}

\newcommand{\QAC}{\mathrm{QAC}}  

\begin{document}
	
	\title[An Isomorphism Theorem for Some Linear Elliptic Differential Operators on $\QAC$-manifolds]
	{An Isomorphism Theorem for Some Linear Elliptic Differential Operators on Quasi-Asymptotically Conical Manifolds}
	
	\author{Mohamed Nouidha}
	\address{D\'epartement de Math\'ematiques, Universit\'e du Qu\'ebec \`a Montr\'eal}
	\email{Nouidha.Mohamed@uqam.ca}
	
	\begin{abstract}
		We study a linear elliptic differential operator of the form $\mathcal{P}=\Delta + V - \lambda$ on a quasi-asymptotically conical (\textbf{QAC}) manifold $(M, g)$, where $g$ is a polyhomogeneous metric and $V$ is a $b$-vector field that is unbounded with respect to the metric $g$.
		
		In Section \ref{section_lunadi_theorem}, we study a more general elliptic operator (with unbounded coefficients) of the form $\mathcal{A} = \Delta + V + r - \lambda$, where $r$ is a function bounded above, and prove a global Schauder estimate for the operator $\mathcal{A}$ on a non-compact Riemannian manifold.
		
		Then, in Section \ref{section_isomorphism_theorem}, we develop weighted H\"older spaces that take into account the asymptotic behavior of the operator $\mathcal{P}$ on \textbf{QAC} manifolds, and prove an isomorphism theorem on the defined weighted spaces using the result proved in Section \ref{section_lunadi_theorem}.
		
		Section \ref{section_QAC} contains some results on polyhomogeneous tensors. For lack of a reference, we include proofs of results needed in Chapter 3. For example, we show that the Hodge-Laplace operator of a polyhomogeneous $QAC$-metric is of the form $\Delta = x_{\max}^2 \operatorname{P}{\mathcal{V}{Qb}}$, where $\operatorname{P}{\mathcal{V}{Qb}}$ is a second-order polynomial in $Qb$-vector fields with polyhomogeneous coefficients and without a zeroth-order term.
	\end{abstract}
	
	\maketitle
	\tableofcontents
	\pagenumbering{arabic}
	\section{introduction}

Let $\left(M, g\right)$ be a complete Riemannian manifold, and $\mathcal{P}$ a linear elliptic operator of second order defined by $\mathcal{P} = \Delta + V -\lambda$, where $V$ is a smooth vector field on $M$. When $M$ is compact, the mapping properties of $\mathcal{P}$ are relatively well understood (see for instance theorem \ref{theorem_elliptic_operator}). This ceases to be true on non compact manifolds, where we need \textbf{weighted} Hölder spaces adapted to the asymptotic behavior of $\Delta$ and $V$ at infinity. One of the issues with the operator $\mathcal{P}$ is that the vector $V$ is potentially unbounded. For instance, in \cite{ChaljubSimon1979ProblmesED} they consider a similar operator on \textbf{AE} manifolds but they require that the coefficients of order 1 and 0 be decreasing at infinity.

The theory of elliptic operators on weighted Hölder spaces was introduced by \cite{NIRENBERG1973271} and studied extensively by \cite{Lockhart} and \cite{McOwen}.
It was also used by \cite{ChaljubSimon1979ProblmesED} to study regularity of linear elliptic operator of second order on \textbf{asymptotically euclidean} (\textbf{AE}) manifolds, the work of whom was adapted by Joyce to study \textbf{asymptotically locally euclidean} (\textbf{ALE}) manifolds, and \textbf{quasi-asymptotically locally euclidean} (\textbf{QALE}) manifolds.\\
More recently, \cite{Degeratu_2017} proved Fredholm results of generalized Laplace-type operators for weighted Sobolev and Hölder spaces on \textbf{quasi-asymptotically conical} (\textbf{QAC}) manifolds. We also mention the work  of Conlon, Degeratu and Rochon \cite{rochon_QAC}, where such a result is used to solve a complex Monge-Ampère equation on weighted  Hölder spaces in order to build Ricci-flat $QAC-$metrics.

Before we state our results, we would like to explain the motivation behind the choice of the differential operator $\mathcal{P}$, which is related to the existence of  Kähler Ricci solitons on non compact manifolds.

\subsection{Riemannian Ricci soliton}

On a Riemannian manifold $(X, g_0)$, the Ricci flow is a heat-like equation of the form:
\begin{equation}
		\label{definition_ricci_flow}
	\begin{aligned}
		&\frac{\partial}{\partial t} g(t) = -2\operatorname{Ric}(g(t)),\\
		&g(0) = g_0,
	\end{aligned}
\end{equation}

where $\operatorname{Ric}(g(t))$ denotes the Ricci curvature of the metric $g(t)$. A \textbf{Ricci soliton} is a self-similar solution of this equation. More precisely,  $(M, g(t))$ is called a Ricci soliton if 
\begin{equation*}
	g(t) = \sigma(t)\phi_t^*g(0),
\end{equation*}
where $\phi_t:M\to M$ is a time dependent family of diffeomorphisms of $M$, and $\sigma(t)$ a time dependent scale factor, satisfying   $\phi_0 = \operatorname{Id}$ and $\sigma(0)=1$. If we plug $g(t)$ in equation $(\ref{definition_ricci_flow})$ (and evaluate at $t=0$), we obtain: 
\begin{align*}
	&-2 \phi_t^*\operatorname{Ric}(g_0) = \sigma'(t) \phi_t^*g_0 - \sigma(t)\phi_t^*\mathcal{L}_X g_0,\\
	&\operatorname{Ric}(g_0) -\frac{1}{2}\mathcal{L}_X g_0 +\frac{\sigma'(0)}{2} g_0 = 0,
\end{align*}
where $X = -\frac{d}{dt}_{\vert_{t=0}}\phi_t$.  Letting $\lambda = \frac{\sigma'(0)}{2}$, the soliton is called shrinking, steady or expanding if $\lambda<0,\lambda = 0$ or $\lambda>0$ respectively.\\
We also could define a \textbf{Ricci soliton} as a triple $\left(M, g, X\right)$ where $\left(M, g\right)$ is a Riemannian manifold and $X$ is a smooth vector field satisfying
\begin{equation}
	\operatorname{Ric}(g)-\frac{1}{2}\mathcal{L}_X g +\lambda g = 0,
\end{equation}
for $\lambda\in\left\{-1, 0, 1\right\}$. When $X = \nabla^g f$, we say that $\left(M, g, X\right)$ is a \textbf{gradient Ricci soliton}, and the above equation translates to 
\begin{equation}
	\operatorname{Ric}(g)- \operatorname{Hess}(f) +\lambda g = 0.
\end{equation}

Suppose now that $(M, J, g)$ is a Kähler manifold. If $(M, g)$ is a Ricci soliton and the vector field $X$ is real holomorphic, then the soliton equation can be rewritten as:
\begin{equation}
	\label{kahler_ricci_equation}
	\rho_{\omega}-\frac{1}{2}\mathcal{L}_X \omega +\lambda \omega = 0,
\end{equation}
$\omega(.,.) = g(J.,.)$ being the Kähler form, and $\rho_{\omega}(.,.)=\operatorname{Ric}(J.,.)$ the Ricci form. We say that $(M, g, X)$ is a \textbf{Kähler Ricci soliton}.

\subsubsection{Asymptotically conical Kähler-Ricci soliton}
Let $\pi:M\to C$ be an equivariant (with respect to the real holomorphic torus action generated by the Reeb vector field) crepant resolution of a  Kähler cone $(C, J_C, g_C)$, and let $X$ denote the holomorphic lift of the radial vector field on the cone. If we impose certain topological conditions on $M$ (see propositions 3.1 and 3.2 of \cite{conlon2016expanding}) we could define an asymptotically conical Kähler metric $g$ on $M$ and a function $F\in C^{\infty}(M)$ such that
\begin{align*}
	&\mathcal{L}_{JX}\omega_g = 0, \\
	&\mathcal{L}_{JX}F= 0, \\
\end{align*}
($\omega_g$ being the Kähler form of $g$) and such that $F$ satisfies the following equation
\begin{equation}
	\label{soliton_function_F}
	\rho_{\omega_g}-\frac{1}{2}\mathcal{L}_X \omega_{g} + \omega_{g} = i\partial\overline{\partial} F. 
\end{equation}
Furthermore, the function $F$ can be chosen such that it decays at infinity together with its derivatives.
The metric $g$ which is sometimes referred to as a \textbf{background metric} can also be chosen such that $g= \pi^*(g_C+\operatorname{Ric(g_C)})$ outside of a compact set.
Suppose now that $\omega_{\phi} = \omega_{g}+i\partial\overline{\partial}\phi$ is a Kähler metric that solve the soliton equation $(\ref{kahler_ricci_equation})$. Then, combining equations $(\ref{soliton_function_F})$ and $(\ref{kahler_ricci_equation})$ we obtain that
\begin{equation*}
	i\partial\overline{\partial}\left(\log\frac{\omega_{\phi}^n}{\omega_g^n}+\frac{1}{2}X\phi - \lambda\phi\right) = i\partial\overline{\partial} F.
\end{equation*}
Hence, if $\phi$ satisfies the following complex Monge-Ampère equation 
\begin{equation}
	\label{soliton_monge_ampere}
\log\frac{\omega_{\phi}^n}{\omega_g^n}+\frac{1}{2}X\phi - \lambda\phi = F,
\end{equation}
then $\omega_{\phi}$ is automatically a solution of equation $(\ref{kahler_ricci_equation})$.\\
Then we ask the following question: does there exist a smooth function $\phi$ that solves the complex Monge-Ampère equation $(\ref{soliton_monge_ampere})$?\\
It turns out that if we choose our weighted spaces carefully, we should be able to solve it.
To show the existence of a solution we usually use the continuity method.  The openness follows from the fact that the linearization of the previous operator is exactly the operator $\mathcal{P}$ which is an isomorphism. The closedness is the more difficult part, since we are dealing with an operator with unbounded coefficients on a non-compact manifold. \\
As examples of Asymptotically conical Kähler-Ricci solitons, Conlon and Deruelle show that given any negative line bundle $L$ over a projective manifold $D$, the total space of $L^{\otimes p}$ admits an asymptotically conical expanding gradient Kähler-Ricci soliton for any $p$ such that $c_1\left(K_D\bigotimes\left(L^*\right)^{\otimes p}\right)>0$. This is actually the particular case of a more general construction described in Corollary B of \cite{conlon2016expanding}.

\subsection{Motivation}
Let $L$ be holomorphic line bundle over a compact complex orbifold $D$. The total space of $L$ has singularities going off to infinity. In this case, the crepant resolution of $L$ will introduce some topology at infinity, which make it harder to build $QAC$ solitons using the same technique as in the $AC$ case. For instance, if we take the canonical bundle $K_X$ over a complex orbifold $X$ with isolated singularities, then, the canonical bundle over the crepant resolution of $X$ is a crepant resolution of $K_X$.\\
 Conlon, Degeratu and Rochon solve this problem by using a natural compactification of $L$ into an \textbf{orbifold with fibred corners} $\widetilde{L}$. Then, given a background metric of the right type (in their case a Calabi-Yau conic orbifold metric) they proceed by gluing suitable local models near each singularity.\\
We think that it is possible to proceed in the same manner in order to build $QAC$ Kähler-Ricci soliton. In fact we could use the same technique as in the $AC$ case to solve the soliton equation away from the singular set, and use the technique in \cite{rochon_QAC} to glue suitable models near each singularity. \\
In this setting, the radial vector field with respect to the (background) conique metric on $L\backslash D$ is a $b-$vector field on $\widetilde{L}$.\\
This work focuses on solving one of the needed steps in order to build examples of QAC Kähler-Ricci soliton.

\subsubsection{Main results}

In section \ref{section_lunadi_theorem} we prove a version of Lunardi's theorem \cite{lunardi_estimate} which is a global Schauder estimate that is essential in the proof of our main result. We modify and expand the proof in \cite{smoothing_deruelle} to obtain a more general version of the theorem. See section \ref{section_isomorphism_theorem} (subsection \ref{section_functional_spaces}) for the definition of the functional spaces mentioned below.

\begin{theo}[Lunardi]
	Let $\left(M^n, g\right)$ be a complete Riemannian manifold with positive injectivity radius, and $V$ be a smooth vector field on $M$. Let $\mathcal{A}$ be an elliptic differential operator acting on tensors over $M$ such that:
	\begin{equation*}
		\mathcal{A} = \underbrace{\Delta +\nabla_V}_{\Delta_V}+ r(x)\:,\:r\in C^3(M).
	\end{equation*}
	Suppose that $sup_{x\in M} r(x)=r_0<\infty$ and that there exists a positive constant $C$ such that $\sum_{i=1}^{3}\vert\vert\nabla^i r\vert\vert<C$. Assume also that there exists a positive constant $K$ such:\\
	\begin{equation*}
		\vert\vert Rm(g)\vert\vert_{C^3(M,E)}+\vert\vert Rm(g)\ast V\vert\vert_{C^3(M,E)}+\vert\vert \nabla V\vert\vert_{C^{2}(M,E)}\le K,
	\end{equation*}
	where $Rm(g)\ast V=Rm(g)(V,.,.,.)$. Assume also that there exists a function $\phi\in C^2(M)$ and a constant $\lambda_0\ge r_0$ such that:
	\begin{equation*}
		\lim_{x\to\infty}\phi (x) = +\infty, \: sup_{x\in M}\left(\mathcal{A}(\phi)(x)-\lambda_0\phi\left(x\right)\right)<\infty.
	\end{equation*}
	Then:
	\begin{enumerate}
		\item For any  $\lambda>r_0$, there exists a positive constant $C$ such that for any $H\in C^0\left(M, E\right)$, there exists a unique tensor $h\in D^2_{\mathcal{A}}\left(M, E\right)$, satisfying:
		\begin{equation*}
			\mathcal{A}(h) -\lambda h= H,\: \vert\vert h\vert\vert_{D^2_{\mathcal{A}}\left(M,E\right)}\le C \vert\vert H\vert\vert_{C^0\left(M, E\right)}.
		\end{equation*}
		Moreover $D^2_{\mathcal{A}}\left(M, E\right)$ is continuously embedded in $C^{\theta}\left(M,E\right)$ for any $\theta\in\left(0,2\right)$, i.e. there exists a positive constant $C(\theta)$ such that for any $h\in D^2_{\mathcal{A}}\left(M, E\right)$, 
		\begin{equation*}
			\vert\vert h\vert\vert_{C^{\theta}(M, E)}\le C(\theta)\vert\vert h\vert\vert^{\frac{\theta}{2}}_{D^2_{\mathcal{A}}\left(M, E\right)} \vert\vert h\vert\vert^{1-\frac{\theta}{2}}_{C^0(M, E)}.
		\end{equation*}
		\item For any $\lambda>r_0$, there exists a positive constant $C$ such that for any $H\in C^{0,\theta}\left(M, E\right)$, $\theta\in \left(0,1\right)$ , there exists a unique tensor $h\in C^{2,\theta}\left(M, E\right)$ satisfying:
		\begin{equation*}
			\mathcal{A}(h) - \lambda h= H,\: \vert\vert h\vert\vert_{C^{2,\theta}\left(M, E\right)}\le C \vert\vert H\vert\vert_{C^{0,\theta}\left(M, E\right)}.
		\end{equation*}
	\end{enumerate}
\end{theo}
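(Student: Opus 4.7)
The plan is to follow Lunardi's semigroup strategy \cite{lunardi_estimate} adapted to the bounded-geometry setting as in \cite{smoothing_deruelle}. The global Schauder statement decomposes into three independent ingredients: a uniform $C^0$ a priori estimate powered by the Lyapunov function $\phi$, an existence proof by exhaustion of $M$ with Dirichlet boundary data, and real-interpolation/semigroup theory that upgrades part (1) to the Hölder embedding and part (2).

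\smallskip

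First I would establish the $C^0$ bound (uniqueness falls out as a by-product). Kato's inequality reduces the tensor equation $(\mathcal{A} - \lambda) h = H$ to a scalar differential inequality of the form
\begin{equation*}
	(\Delta + \nabla_V + r - \lambda)\, |h| \;\ge\; -|H|
\end{equation*}
valid where $h \neq 0$. The barrier enters as follows: set $w := |h| - \varepsilon \phi - (\lambda - r_0)^{-1}\|H\|_{C^0}$ and use the hypotheses $\mathcal{A}\phi - \lambda_0 \phi \le C_0$ and $\lambda \ge \lambda_0 > r_0$. Because $\phi(x) \to +\infty$ at infinity while $|h|$ is bounded, $w$ cannot attain a positive supremum at an interior point (the elliptic inequality for $w$ rules it out) nor at infinity, so $\sup w \le 0$; letting $\varepsilon \to 0$ gives $\|h\|_{C^0} \le (\lambda - r_0)^{-1}\|H\|_{C^0}$. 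Uniqueness is the special case $H = 0$.

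\smallskip

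For existence, fix a smooth exhaustion $M = \bigcup_n \Omega_n$ by relatively compact open sets with smooth boundary, and solve the Dirichlet problem $(\mathcal{A} - \lambda) h_n = H$ on $\Omega_n$ with $h_n|_{\partial \Omega_n} = 0$ using the standard Fredholm theory for second-order linear elliptic systems on compact manifolds with boundary. The same barrier argument, applied on each $\Omega_n$, produces a uniform bound $\|h_n\|_{C^0(\Omega_n)} \le C\|H\|_{C^0}$ independent of $n$. Interior Schauder estimates, whose constants depend only on the assumed $C^3$-bounds on $\mathrm{Rm}(g)$, $\nabla V$, $\mathrm{Rm}(g) \ast V$, and $r$, together with the positive injectivity radius, then yield uniform $C^{2,\alpha}_{\mathrm{loc}}$ bounds, and a standard diagonal extraction produces a limit $h \in D^2_{\mathcal{A}}(M,E)$ satisfying the equation with $\|h\|_{D^2_{\mathcal{A}}} \le C\|H\|_{C^0}$.

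\smallskip

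For the interpolation inequality in part (1), the uniform resolvent estimate above means that $\mathcal{A}$ is sectorial on $C^0(M,E)$, hence generates an analytic semigroup on that space. The Da Prato--Grisvard characterization of real-interpolation spaces then identifies $(C^0, D^2_{\mathcal{A}})_{\theta/2,\infty}$ with $C^\theta(M,E)$ up to equivalent norms, which is precisely the sought multiplicative inequality. Part (2) follows from part (1) combined with local Schauder estimates whose constants are uniform in the base point, globalized by a uniform normal-coordinate covering produced by the positive injectivity radius and curvature bounds. The main obstacle is two-fold: making the Kato reduction rigorous where $h$ vanishes (regularize $|h|$ by $(|h|^2 + \delta)^{1/2}$ and pass to the limit, or argue in the viscosity sense), and transporting the Euclidean Da Prato--Grisvard identification of Hölder spaces to the manifold $M$, where one must check that the bounded-geometry hypotheses make the semigroup representation formulas globally meaningful.
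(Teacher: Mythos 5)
Your uniqueness/$C^0$ step matches the paper's: the regularization $(\vert\vert h\vert\vert^2+\epsilon^2)^{1/2}$ is exactly the paper's $h_\epsilon$ in proposition \ref{lunardi_injectivite}, and the Lyapunov function $\phi$ is used the same way. Your existence-by-Dirichlet-exhaustion is a legitimate alternative to the paper's construction (which instead cuts off the \emph{coefficients} $V_s=\psi_s V$, $r_s=\psi_s r$, solves the parabolic Cauchy problem, and obtains $h$ as the Laplace transform $\int_0^\infty e^{-\lambda t}T(t)H\,dt$), and it would produce an element of $D^2_{\mathcal{A}}$ with the $C^0$ bound. The problem is that everything after that point in your proposal rests on machinery you have not actually built, and the two places where you wave at it are precisely the technical core of the theorem.

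First, the identification $(C^0(M,E),D^2_{\mathcal{A}}(M,E))_{\theta/2,\infty}=C^{\theta}(M,E)$ is not a soft consequence of the real-axis resolvent bound $\vert\vert R(\lambda)\vert\vert\le(\lambda-r_0)^{-1}$; that bound does not even give sectoriality (one needs the resolvent estimate on a complex sector), and the Da Prato--Grisvard theory only identifies the interpolation space \emph{abstractly} with $D_A(\theta,\infty)$. To land in the concrete H\"older scale one needs the quantitative smoothing estimates $\vert\vert T(t)\vert\vert_{\mathcal{L}(C^s,C^k)}\le Ce^{\omega t}t^{-(k-s)/2}$ for $0\le s\le k\le 3$. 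These are exactly what the paper spends most of Section \ref{section_lunadi_theorem} proving (theorem \ref{theoreme_estimation_lunardi}), via the Bernstein-type quantity $s(t,x)=\vert\vert u\vert\vert^2+\sum_i\frac{(\alpha t)^i}{i}\vert\vert\nabla^i u\vert\vert^2$, the commutator identities of lemma \ref{technical_lemma_about_commuting_convariant_derivative} for $[\nabla^k,\Delta]$ and $[\nabla^k,\nabla_V]$ (this is where the hypotheses on $Rm(g)$, $Rm(g)\ast V$ and $\nabla V$ enter), and the parabolic maximum principle. You list this as ``the main obstacle'' but do not resolve it; without it neither the embedding in part (1) nor part (2) follows.

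Second, your route to part (2) --- ``local Schauder estimates whose constants are uniform in the base point, globalized by a covering'' --- fails for this operator. The local Schauder constant for $\Delta+\nabla_V+r-\lambda$ on a unit ball depends on the $C^{0,\theta}$ norm of the first-order coefficient on that ball, i.e.\ on $\vert\vert V\vert\vert_{C^{0,\theta}(B)}$, and $V$ is unbounded on $M$ (only $\nabla V$ and $Rm(g)\ast V$ are assumed bounded); this is the entire reason classical weighted-Schauder arguments do not apply and Lunardi's method is needed. The paper instead obtains $C^{2,\theta}$ regularity by splitting $h=\int_0^{\epsilon}e^{-\eta t}T(t)\tilde H\,dt+\int_{\epsilon}^{\infty}e^{-\eta t}T(t)\tilde H\,dt$, estimating the two pieces in $C^{\alpha}$ and $C^{2,\alpha}$ via corollary \ref{corollaire_estimation_T}, and invoking the reiteration theorem together with proposition \ref{deruelle_espace_interpolation} to conclude $h\in(C^{\alpha},C^{2,\alpha})_{\gamma,\infty}=C^{2,\theta}$. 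You would need to replace your covering argument with something of this kind.
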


Then in chapter 3 we prove the main results. Given a $QAC-$manifold $(X, g)$ such that $g$ is a polyhomogeneous metric, and a $b-$vector field $V$ on $X$ (we will denote by $M = \mathring{X}$), we obtain the following Schauder estimates for the linear elliptic operator $\mathcal{P}_{\alpha}$ (described in equation $(\ref{conjugated_operator_A})$ and remark $\ref{remark_original_operator_weighted_operator}$):

\begin{theo}
	Let $\mathcal{C}^{k;j,\theta}(M, E)$ be the functional space defined by:
	\begin{equation*}
		\mathcal{C}^{k;j,\theta}(M, E) = \left\{h\in C^{k+j+\lfloor\theta\rfloor,\theta-\lfloor\theta\rfloor}_{loc}(M, E)\:\vert\: x_{max}^{-i}\nabla^ih\in C^{j+\lfloor\theta\rfloor,\theta-\lfloor\theta\rfloor}(M, E),\:\:\forall i=0,\dots,k\right\}.
	\end{equation*}
	such that $\theta\in(0,2)$, and endowed with the norm:
	\begin{equation*}
		\vert\vert h\vert\vert_{\mathcal{C}^{k;j,\theta}(M, E)} = \sum_{i=0}^{k}\vert\vert x_{max}^{-i}\nabla^ih\vert\vert_{C^{j+\lfloor\theta\rfloor,\theta-\lfloor\theta\rfloor}(M, E)}
	\end{equation*}
	Suppose also that:
	\begin{equation*}
		\vert\vert Rm(g)\ast V\vert\vert_{C^0(M,E)}+\vert\vert \nabla V\vert\vert_{C^{0}(M,E)}< \infty
	\end{equation*}
	Then, for any constant $\lambda\in \mathbb{R}$ such that:
	\begin{equation*}
		\lambda>\max\left(\sup\limits_{M}V\ln(v^{\alpha}x_{max}^k),\sup\limits_{M}V\ln(v^{\alpha}x_{max}^{k-1})\right)
	\end{equation*}
	we have that:
	\begin{itemize}
		\item There exists a positive constant $C$ such that for any $H\in C^{k,\theta}_{Qb}(M, E)$ there exists a unique $h\in D^{k+2,\theta}_{\mathcal{P}_{\alpha}}(M, E)$ satisfying:
		\begin{equation*}
			\mathcal{P}_{\alpha}(h) - \lambda h=H,\: \vert\vert h\vert\vert_{D^{k+2,\theta}_{\mathcal{P}_{\alpha}}(M, E)}\le C\vert\vert H\vert\vert_{C^{k,\theta}_{Qb}(M, E)};\: \theta\in [0,1)
		\end{equation*}
		i.e. the operator 
		\begin{equation*}
			\mathcal{P}_{\alpha}-\lambda: D^{k+2,\theta}_{\mathcal{P}_{\alpha}}(M, E)\to C^{k,\theta}_{Qb}(M, E)
		\end{equation*}
		is an isomorphism of Banach spaces. Moreover, $D^{k+2}_{\mathcal{P}_{\alpha}}(M, E)$ embeds continuously in $\mathcal{C}^{k;0,\theta}(M, E)$ for any $\theta\in(0,2)$, i.e there exists a positive constant C such that for any $h\in D^{k+2}_{\mathcal{P}_{\alpha}}(M, E)$,
		\begin{equation*}
			\vert\vert h\vert\vert_{\mathcal{C}^{k;0,\theta}(M, E)}\le C \vert\vert h\vert\vert^{\frac{\theta}{2}}_{D^{k+2}_{\mathcal{P}_{\alpha}}(M, E)}\vert\vert h\vert\vert^{1-\frac{\theta}{2}}_{C^k_{Qb}(M, E)}
		\end{equation*}
		\item There exists a positive constant $C$ such that, for $\theta\in(0,1)$
		\begin{equation*}
			\vert\vert h\vert\vert_{\mathcal{C}^{k;2,\theta}(M, E)} \le C\vert\vert H\vert\vert_{C^{k,\theta}_{Qb}(M, E)}
		\end{equation*}
	\end{itemize}
\end{theo}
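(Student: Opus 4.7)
The plan is to deduce both bullets from the Lunardi-type theorem of Section \ref{section_lunadi_theorem}, applied to the conjugated operator $\mathcal{P}_{\alpha}$, followed by a bootstrap along the $Qb$-calculus. I begin with the case $k=0$. Writing $\mathcal{P}_{\alpha}=\Delta+W-\lambda$, the assumption $\|\mathrm{Rm}(g)\ast V\|_{C^0}+\|\nabla V\|_{C^0}<\infty$ together with the polyhomogeneity of $g$ supplies the curvature and drift bounds required by Lunardi's theorem, since $W$ differs from $V$ only by polyhomogeneous terms controlled by the conjugating weight $v^{\alpha}x_{\max}^{k}$. The critical ingredient is the Lyapunov function: I take $\phi=\ln(v^{\alpha}x_{\max}^{k})+C_0$, which is proper on the QAC end. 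Then $V\phi=V\ln(v^{\alpha}x_{\max}^{k})$ is bounded above by the quantity featured in the hypothesis on $\lambda$, while the structure theorem $\Delta=x_{\max}^{2}P_{\mathcal{V}_{Qb}}$ (with no zeroth-order term) guarantees that $\Delta\phi$ is bounded, because $Qb$-derivatives applied to logarithms of polyhomogeneous defining functions stay bounded. Hence $\sup_{M}(\mathcal{P}_{\alpha}\phi-\lambda\phi)<\infty$ for $\lambda$ in the stated range, and Lunardi's theorem yields the $k=0$ isomorphism together with the interpolation embedding into $\mathcal{C}^{0;0,\theta}$ for $\theta\in(0,2)$.

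I then raise the order by induction on $k$. Given $Y_1,\dots,Y_k\in\mathcal{V}_{Qb}$, the structure theorem places $\Delta$ inside the $Qb$-calculus, and $V$ is already $Qb$ since it is a $b$-vector field, so the commutator $[\mathcal{P}_{\alpha},Y_1\cdots Y_k]$ is a differential operator of order $k+1$ in $Qb$-vector fields with polyhomogeneous bounded coefficients. Applying $Y_1\cdots Y_k$ to $(\mathcal{P}_{\alpha}-\lambda)h=H$ yields
\begin{equation*}
(\mathcal{P}_{\alpha}-\lambda)(Y_1\cdots Y_k h)=Y_1\cdots Y_k H+R_k(h),
\end{equation*}
where by the induction hypothesis $R_k(h)\in C^{0,\theta}_{Qb}$ with norm controlled by $\|h\|_{D^{k+1,\theta}_{\mathcal{P}_{\alpha}}}$. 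Applying the $k=0$ case to $Y_1\cdots Y_k h$ closes the induction and produces the isomorphism $\mathcal{P}_{\alpha}-\lambda:D^{k+2,\theta}_{\mathcal{P}_{\alpha}}\to C^{k,\theta}_{Qb}$. The embedding into $\mathcal{C}^{k;0,\theta}$ follows along the same lines after noting that $x_{\max}^{-i}\nabla^{i}$ is expressible as a polynomial of degree $i$ in $Qb$-vector fields with bounded polyhomogeneous coefficients, so the $k=0$ Lunardi embedding applied termwise delivers the stated bound.

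For the second bullet I combine the above with local $Qb$-Schauder estimates: since $\mathcal{P}_{\alpha}$ is uniformly elliptic in a $Qb$-frame with coefficients in $C^{0,\theta}_{Qb}$, one has $\|u\|_{C^{2,\theta}_{Qb}}\le C(\|\mathcal{P}_{\alpha}u\|_{C^{0,\theta}_{Qb}}+\|u\|_{C^0})$ at each scale of the QAC geometry, and applying this with $u=Y_1\cdots Y_k h$ and summing produces $\|h\|_{\mathcal{C}^{k;2,\theta}}\le C\|H\|_{C^{k,\theta}_{Qb}}$. The main obstacle is the first step: one must verify that conjugation by $v^{\alpha}x_{\max}^{k}$ yields a drift $W$ for which $V\phi$ still absorbs the linear term $\lambda\phi$, and the two suprema in the hypothesis on $\lambda$ are tailored precisely to this---one for the weight $v^{\alpha}x_{\max}^{k}$ itself, and one for $v^{\alpha}x_{\max}^{k-1}$, which surfaces when one commutes $\mathcal{P}_{\alpha}$ with a $k$-th $Qb$-derivative. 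That no further correction is required hinges crucially on the \emph{absence of a zeroth-order term} in the $Qb$-expansion $\Delta=x_{\max}^{2}P_{\mathcal{V}_{Qb}}$.
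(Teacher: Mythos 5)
Your overall strategy---induct on $k$, feed the commutator error terms into Lunardi's theorem, then bootstrap---is the same as the paper's. But you have skipped the step that actually forces the second supremum $\sup_M V\ln(v^{\alpha}x_{\max}^{k-1})$ into the hypothesis on $\lambda$, and the argument does not close without it. When you apply the $k=0$ case to the right-hand side $Y_1\cdots Y_k H+R_k(h)$ (equivalently, in the paper's notation, to the equation satisfied by $h_k=x_{\max}^{-k}\nabla^k h$), Lunardi produces \emph{some} solution $\tilde h_k\in D^{2,\theta}_{\mathcal{P}_\alpha}$; it does not tell you that $\tilde h_k$ equals $x_{\max}^{-k}\nabla^k h$. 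Uniqueness in Lunardi's theorem only holds inside the space of bounded tensors, while the induction hypothesis gives you control of $x_{\max}^{-(k-1)}\nabla^k h$ only, so the difference $T=\tilde h_k-h_k$ is a priori of size $O(x_{\max}^{-1})$ near the maximal hypersurfaces and may blow up. The paper resolves this with Proposition \ref{prop_non_bounded_function_maximum_principal} and Corollary \ref{unbounded_tensor_maximum_principal}: a maximum principle for tensors satisfying the homogeneous equation and growing like $x_{\max}^{-1}$, proved with the barrier $x_{\max}^{\theta}$ for $\theta\in(-2,-1)$. Lemma \ref{remark_on_existence_of_theta} is where the condition $\lambda>\sup_M V\ln(v^{\alpha}x_{\max}^{k-1})$ is consumed---it guarantees such a $\theta$ exists. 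Your explanation that this supremum ``surfaces when one commutes $\mathcal{P}_\alpha$ with a $k$-th $Qb$-derivative'' is not the real mechanism; the commutator only produces the shift by $V\ln(x_{\max}^{k})$, which is covered by the \emph{first} supremum. Without an a-priori-unbounded maximum principle your identification of the Lunardi solution with the actual $k$-th weighted derivative is a genuine gap.

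Two smaller points. First, your Lyapunov function has the wrong sign (you need $\phi\to+\infty$ at the boundary, so $\phi=-\ln v$ as in Proposition \ref{exaustion_function_qac_manifold}, not $\ln(v^{\alpha}x_{\max}^{k})$, which tends to $-\infty$). Second, the assertion that ``$V$ is already $Qb$ since it is a $b$-vector field'' is false: $\mathcal{V}_{Qb}$ is a proper subalgebra of $\mathcal{V}_b$, and the whole point of the construction is that $V$ is merely a $b$-vector field, hence unbounded with respect to $g_{QAC}$. The commutator bookkeeping still works, but not for the reason you give; it relies on Lemma \ref{technical_lemma_about_commuting_convariant_derivative} together with the polyhomogeneity package (Corollaries \ref{covariant_polyhomogeneous_tensor} and \ref{polyho_lunardi_condition}) to control $\nabla^j V$ and $\nabla^j(Rm\ast V)$ with the correct powers of $x_{\max}$.
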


As a consequence of the previous theorem, we prove the following result:

\begin{theo}[Isomorphism theorem]
Le $(X, g)$ be a $QAC-$manifold such that $g$ is polyhomogeneous. We will denote by $M = \mathring{X}$.   Let $V$ be a $b-$vector field on $X$ such that:
\begin{equation*}
	\vert\vert Rm(g)\ast V\vert\vert_{C^0(M,E)}+\vert\vert \nabla V\vert\vert_{C^{0}(M,E)}< \infty
\end{equation*}
Then, the operator $\Delta_V-\lambda:D^{2+k,\theta}_{\Delta_V,\alpha}(M, E)\to C^{k,\theta}_{Qb,\alpha}(M, E)$ is an isomorphism of Banach spaces, for any $\theta\in (0,1)$ and any constant $\lambda\in\mathbb{R}$ such that:
\begin{equation*}
	\lambda>\max\left(\sup\limits_{M}V\ln(x^{\alpha}x_{max}^k),\sup\limits_{M}V\ln(x^{\alpha}x_{max}^{k-1})\right)
\end{equation*}
\end{theo}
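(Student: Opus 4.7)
The plan is to derive this isomorphism theorem from the preceding Schauder estimate theorem for $\mathcal{P}_\alpha$ by a weight-conjugation argument. The operator $\mathcal{P}_\alpha$ appearing in the preceding statement is (by the definition alluded to in the introduction via equation $(\ref{conjugated_operator_A})$) nothing but the conjugation of $\Delta_V$ by a positive weight function $w$ built from $x^{\alpha}$ and appropriate powers of $x_{\max}$, and the weighted functional spaces $D^{k+2,\theta}_{\Delta_V,\alpha}(M,E)$ and $C^{k,\theta}_{Qb,\alpha}(M,E)$ are defined so that multiplication by $w$ is an isometric isomorphism onto the unweighted spaces $D^{k+2,\theta}_{\mathcal{P}_\alpha}(M,E)$ and $C^{k,\theta}_{Qb}(M,E)$ appearing in the preceding theorem. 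Once this identification is in place, the isomorphism claim transfers verbatim along the conjugation, and there is essentially nothing to prove beyond checking that the hypotheses of the preceding theorem are met.

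Concretely, I would proceed as follows. First, write
\begin{equation*}
\mathcal{P}_\alpha = w^{-1}\circ \Delta_V \circ w = \Delta + \nabla_{V + 2\nabla\log w} + r_\alpha,
\end{equation*}
with $r_\alpha = V(\log w) + w^{-1}\Delta w$. The bound
\begin{equation*}
\lambda > \max\!\bigl(\sup_M V\ln(x^{\alpha}x_{\max}^k),\,\sup_M V\ln(x^{\alpha}x_{\max}^{k-1})\bigr)
\end{equation*}
is precisely the statement that $\sup_M(r_\alpha - \lambda)<\infty$, i.e. $r_0 := \sup r_\alpha$ satisfies $\lambda > r_0$ in the notation of the preceding theorem (the two comparisons come from the two scales in the $QAC$ geometry, one for the shift of the action on $h$ and one for the shift induced on its first derivative after commuting with a $Qb$-vector field). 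Second, verify the geometric hypotheses of Lunardi's theorem on $(M,g)$: polyhomogeneity of $g$ together with the $QAC$ structure gives uniform bounds on $Rm(g)$ and a positive injectivity radius (in fact these are the results of Section \ref{section_QAC}), while the assumed bounds on $\|Rm(g)\ast V\|_{C^0}$ and $\|\nabla V\|_{C^0}$ together with the $b$-structure of $V$ and the polyhomogeneity of $w$ yield the required $C^3$ bounds on $\nabla V$, $Rm(g)\ast(V+2\nabla\log w)$ and on $r_\alpha$. Third, construct an exhaustion function $\phi$ adapted to $\mathcal{P}_\alpha$: a suitable polynomial in $-\log x_{\max}$ (or an equivalent $b$-boundary defining function) will do, since on a $QAC$-manifold its $\Delta_V$-image is controlled by the polynomial scale of $V$ times $\phi$.

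Third, apply the preceding theorem to $\mathcal{P}_\alpha - \lambda$ to obtain the isomorphism
\begin{equation*}
\mathcal{P}_\alpha - \lambda : D^{k+2,\theta}_{\mathcal{P}_\alpha}(M,E) \longrightarrow C^{k,\theta}_{Qb}(M,E),
\end{equation*}
and then transfer it back by the commutative diagram
\begin{equation*}
\Delta_V - \lambda \;=\; w\circ(\mathcal{P}_\alpha - \lambda)\circ w^{-1},
\end{equation*}
which, combined with the definition of the weighted spaces, yields the desired isomorphism on $D^{k+2,\theta}_{\Delta_V,\alpha}\to C^{k,\theta}_{Qb,\alpha}$.

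The main obstacle I anticipate is the bookkeeping at step two, namely showing that the conjugated coefficients $r_\alpha$ and $V+2\nabla\log w$ genuinely satisfy the $C^3$ (resp. $C^2$) bounds required by Lunardi's theorem, uniformly across both scales of the $QAC$-geometry. This is where the polyhomogeneous structure of $g$, of $x_{\max}$ and of the $b$-vector field $V$ are used essentially: derivatives of $\log w$ must be computed in the $Qb$-calculus and bounded using the results collected in Section \ref{section_QAC}, and the reason one sees both $x_{\max}^k$ and $x_{\max}^{k-1}$ in the threshold for $\lambda$ is that after one commutation with a $Qb$-derivation one loses a power of $x_{\max}$, so the shift constant relevant for higher-order estimates is slightly different from the one for the base case $k=0$. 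Once this uniform control is established, the rest of the proof is essentially a bookkeeping exercise.
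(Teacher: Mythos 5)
There is a genuine gap at the heart of your plan: the conjugated operator is \emph{not} $\mathcal{P}_{\alpha}$. Writing $f=v^{-\alpha}$, the paper's computation (equation $(\ref{conjugated_operator_A})$) gives
\begin{equation*}
f\,\mathcal{A}(f^{-1}h)=\bigl(\mathcal{P}_{\alpha}-\lambda\bigr)(h)+\mathcal{K}_{\alpha}h,\qquad \mathcal{K}_{\alpha}=\alpha^{2}\vert\vert\nabla\ln v\vert\vert^{2}-\alpha\,\Delta\ln v,
\end{equation*}
so the operator you need to invert is $\mathcal{P}_{\alpha}+\mathcal{K}_{\alpha}-\lambda$, not $\mathcal{P}_{\alpha}-\lambda$. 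Your proposal absorbs $\mathcal{K}_{\alpha}$ into the potential $r_{\alpha}$ and asserts that the stated threshold on $\lambda$ ``is precisely'' $\lambda>\sup r_{\alpha}$; but with $\mathcal{K}_{\alpha}$ included this is false: the hypothesis $\lambda>\max\bigl(\sup_M V\ln(v^{\alpha}x_{\max}^{k}),\sup_M V\ln(v^{\alpha}x_{\max}^{k-1})\bigr)$ does not dominate $\sup_M\bigl(\alpha V\ln v+\mathcal{K}_{\alpha}\bigr)$, since $\mathcal{K}_{\alpha}$, while decaying at the boundary, has no sign in the interior. So applying Lunardi (or the preceding Schauder theorem, which is stated for the paper's $\mathcal{P}_{\alpha}$ without the $\mathcal{K}_{\alpha}$ term) to your enlarged potential would only give the result for a possibly smaller range of $\lambda$, and the claim that ``the isomorphism transfers verbatim along the conjugation'' is exactly where the argument breaks.

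The paper closes this gap with two additional steps that your proposal omits. First, it shows that $\mathcal{K}_{\alpha}:D^{2,\theta}_{\mathcal{P}_{\alpha}}(M,E)\to C^{0,\theta}(M,E)$ is a \emph{compact} operator (its coefficients and their first derivatives tend to zero at infinity by proposition $\ref{condition_lunardi_remark}$, combined with interior Schauder estimates and an Arzel\`a--Ascoli diagonal argument), so that $\mathcal{P}_{\alpha}+\mathcal{K}_{\alpha}-\lambda$ is a Fredholm perturbation of the isomorphism $\mathcal{P}_{\alpha}-\lambda$ and hence has index zero. Second, it proves injectivity of the conjugated operator $\mathcal{A}_{f}$ directly, by a maximum-principle argument using the exhaustion function $\phi=-\ln v$ (and, in the induction on $k$, the barrier $x_{\max}^{\theta}$ with $\theta\in(-2,-1)$, which is where the second threshold $\sup_M V\ln(v^{\alpha}x_{\max}^{k-1})$ actually enters — not, as you suggest, from commuting with a $Qb$-vector field). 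Index zero plus injectivity then gives the isomorphism for the full stated range of $\lambda$. Your weight-conjugation framework and your verification of the Lunardi hypotheses for $\mathcal{P}_{\alpha}$ are fine as far as they go, but without the compactness of $\mathcal{K}_{\alpha}$ and the injectivity argument the theorem does not follow.
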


Note that the $b-$vector $V$ is unbounded with respect to the $QAC-$metric $g$, which was taken into consideration when defining the weighted Hölder spaces used in this result.\\
It is also worth mentioning that this result generalizes a previous result of Deruelle \cite{smoothing_deruelle} on asymptotically conical manifolds that was used to build expanding Kähler Ricci solitons in \cite{conlon2016expanding} and in the analogous problem of constructing \textbf{QAC}-expanders, proves openness.

	\begin{Acknowledgement}
	I am grateful to Fr\'ed\'eric Rochon, my Ph.D. supervisor, for introducing this problem and for his help. His assistance and insightful remarks were crucial to the success of this project.
	\end{Acknowledgement}
	\section{Analysis on Riemannian manifolds}
	
We will try to summarize in this section the material needed for the subsequent sections, the main source being \cite{jost2008riemannian} and the three first chapters of \cite{joyce2000compact}.

Let $\left(M, g\right)$ be a non compact, complete Riemannian manifold of dimension $n$ with positive injectivity radius.  We will denote by $\operatorname{dV}$ the volume element induced by the metric $g$. When using coordinate notation, we implicitly refer to some local frame $\left(e_1,\dots, e_n\right)$ on $TM$, and its dual frame $\left(e_1^*,\dots,e_n^*\right)$ on $T^*M$.

\begin{mydef}
	Let $E$ a be a vector bundle over $M$. A connection $\nabla$ on $E$ is a linear map $\nabla:\Gamma\left(E\right)\to \Gamma\left(E\right)\bigotimes\Gamma\left(T^*M\right)$ satisfying:
	\begin{itemize}
		\item $\nabla (fs) = f\nabla s +s\otimes df$, for all $s\in\Gamma\left(E\right)$ and $s\in C^{\infty}(M)$,
		\item  $\nabla_{\alpha v+w}s = \alpha\nabla_v s +\nabla_w s$, for all $v,w\in\Gamma(TM)$, $s\in\Gamma\left(E\right)$ and $\alpha\in C^{\infty}(M)$.
	\end{itemize}
\end{mydef}

\begin{remq}
	To prevent any confusion we use $\Gamma(E)$ to denote the space of \textbf{smooth} global sections of $E$, i.e elements of $C^{\infty}\left(X, E\right)$.
\end{remq}

\begin{prop}
	Let $E$ be a vector bundle over $M$ endowed with a connection $\nabla$. Then, there exists a unique section $R\left(\nabla\right)\in\Gamma(\operatorname{End}(E))\bigotimes\Lambda^2T^*M)$ called the curvature that satisfies the following equation:
	\begin{equation}
		R\left(\nabla\right)(X, Y) e = \left[\nabla_X,\nabla_Y\right] e - \nabla_{\left[X, Y\right]} e\:,\:\text{for all } X,Y\in\Gamma\left(TM\right)\text{ and } e\in\Gamma\left(E\right).
	\end{equation}
\end{prop}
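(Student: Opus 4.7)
The plan is to verify that the map
\begin{equation*}
F: \Gamma(TM) \times \Gamma(TM) \times \Gamma(E) \to \Gamma(E), \quad F(X,Y,e) = [\nabla_X,\nabla_Y]e - \nabla_{[X,Y]}e,
\end{equation*}
is $C^\infty(M)$-multilinear and antisymmetric in $(X,Y)$, and then invoke the standard tensoriality principle: a map of smooth sections that is $C^\infty(M)$-linear in each argument comes from a unique bundle morphism, which in this case will be a section of $\operatorname{End}(E)\otimes\Lambda^2 T^*M$. Antisymmetry in $X,Y$ is immediate from the definition of the bracket and the commutator.

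The core of the argument is the two $C^\infty(M)$-linearity checks, both of which are direct computations using the Leibniz rule for $\nabla$. For linearity in the section argument, I would expand
\begin{equation*}
\nabla_X\nabla_Y(fe) = \nabla_X\bigl(f\,\nabla_Y e + (Yf)e\bigr) = (Xf)\nabla_Y e + f\,\nabla_X\nabla_Y e + X(Yf)\,e + (Yf)\nabla_X e,
\end{equation*}
subtract the same expression with $X$ and $Y$ swapped, and then subtract $\nabla_{[X,Y]}(fe) = f\,\nabla_{[X,Y]}e + ([X,Y]f)e$; the cross-terms $(Xf)\nabla_Y e$, $(Yf)\nabla_X e$, and the Hessian-type terms $X(Yf)e - Y(Xf)e$ cancel against $([X,Y]f)e$, leaving $f\,F(X,Y,e)$. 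For linearity in $X$, I would expand $\nabla_{fX}\nabla_Y e - \nabla_Y(f\,\nabla_X e) - \nabla_{[fX,Y]}e$, use $[fX,Y] = f[X,Y] - (Yf)X$, and observe that the two $(Yf)\nabla_X e$ contributions cancel, giving $f\,F(X,Y,e)$. Linearity in $Y$ follows from antisymmetry.

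Once $C^\infty(M)$-linearity is established, the tensoriality principle yields a unique section $R(\nabla)$ of $\operatorname{End}(E)\otimes\Lambda^2 T^*M$ whose action on $(X_p,Y_p,e_p)$ at any point $p\in M$ is given by $F(\widetilde X,\widetilde Y,\widetilde e)(p)$ for any smooth extensions; the $C^\infty(M)$-linearity guarantees that this value depends only on the pointwise data. Uniqueness is automatic since two sections of a bundle that agree on all triples $(X,Y,e)$ must coincide. No step here is a genuine obstacle; the only bookkeeping to be careful with is keeping track of which derivative terms cancel in the Leibniz expansions, especially in the $f\cdot e$ computation where one uses the identity $X(Yf) - Y(Xf) = [X,Y]f$ in an essential way.
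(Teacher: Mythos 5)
Your proof is correct: the Leibniz-rule computations and the appeal to the tensoriality principle (a $C^\infty(M)$-multilinear, antisymmetric map of sections is induced by a unique section of $\operatorname{End}(E)\otimes\Lambda^2T^*M$) constitute the standard argument, and the cancellations you identify, in particular the use of $X(Yf)-Y(Xf)=[X,Y]f$, are exactly the right ones. The paper states this proposition without proof as a standard fact from its references, so there is nothing to contrast with; your write-up fills that gap correctly.
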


\begin{remq}
	\label{connection_extension}
	Given two vector bundles $E$ and $F$ over $M$, endowed with connections $\nabla^E$ and $\nabla^F$ respectively, we can define a connection $\nabla$ for each of the following vector bundles:
	\begin{itemize}
		\item $E\bigoplus F$: $\nabla( e+f) = \nabla^E e + \nabla^F f$;
		\item  $E\bigotimes F$: $\nabla (e\otimes f) = \nabla^E e\otimes f + e\otimes\nabla^F f$;
		\item  $E^*$: $\left(\nabla L\right)(e) = d(L(e)) - L(\nabla^E e)$.
	\end{itemize}
\end{remq}

\begin{mydef}
	Let $\left(E, \left<.,.\right>\right)$ be a vector bundle over $g$ endowed with a bundle metric. A connection $\nabla$ on $E$ is compatible with the bundle metric (or just \textbf{metric}) if 
	\begin{equation*}
		X\left<S,T\right> = \left<\nabla_X S,T\right> +\left<S,\nabla_X T\right>\:,\:\forall S,T\in\Gamma(E)\:,\:\forall X\in\Gamma(TM).
	\end{equation*}
\end{mydef}

\begin{theo}[Fundamental Theorem of Riemannian Geometry]
	There exists a unique torsion free (i.e. $\nabla_X Y- \nabla_Y X = \left[X, Y\right]$), compatible connection on $TM$ (equipped with the bundle metric $g$), defined by the following \textbf{Koszul formula}:
	\begin{equation}
		2\left<\nabla_X Y, Z\right> = X\left<Y, Z\right> + Y\left<X, Z\right>  - Z\left<X, Y\right>  + \left<\left[X, Y\right], Z\right>  -\left<\left[X, Z\right], Y\right> -\left<\left[Y, Z\right], X\right>. 
	\end{equation}
	This connection is called the Levi-Civita connection of the metric $g$.
\end{theo}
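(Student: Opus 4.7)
The standard strategy is a ``uniqueness forces existence'' argument: I would first show that any torsion-free, metric-compatible connection on $TM$ must satisfy the Koszul identity, and then use that formula to \emph{define} $\nabla$ and verify the axioms. Since uniqueness and existence both require only a finite sequence of algebraic manipulations involving the metric and Lie brackets, no analysis is needed beyond what is already invoked in the statement.

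For uniqueness, suppose $\nabla$ is such a connection. Applying metric compatibility to the three scalar functions $X\langle Y,Z\rangle$, $Y\langle Z,X\rangle$, $Z\langle X,Y\rangle$ and forming the combination (first) $+$ (second) $-$ (third) produces six pairings $\langle\nabla_{\cdot}\cdot,\cdot\rangle$. Using torsion-freeness $\nabla_A B - \nabla_B A = [A,B]$ on the three pairs obtained by cyclically permuting $X,Y,Z$, four of these terms collapse into Lie-bracket expressions, leaving exactly $2\langle\nabla_X Y,Z\rangle$ on the right. This yields the Koszul formula, and since $g$ is non-degenerate, $\langle\nabla_X Y,Z\rangle$ being prescribed for every $Z$ determines $\nabla_X Y$ pointwise.

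For existence, I would \emph{define} $\omega_{X,Y}(Z)$ to be the right-hand side of the Koszul formula (divided by $2$). A direct computation, using $[X,fZ]=X(f)Z+f[X,Z]$ and similar brackets, shows that $\omega_{X,Y}(fZ)=f\omega_{X,Y}(Z)$ for all $f\in C^{\infty}(M)$, so $\omega_{X,Y}$ is a genuine $1$-form; letting $\nabla_X Y$ denote its metric dual under $g$, I then verify: (i) $C^{\infty}(M)$-linearity in $X$; (ii) the Leibniz rule $\nabla_X(fY)=X(f)Y+f\nabla_X Y$; (iii) metric compatibility, by computing $\langle\nabla_X Y,Z\rangle+\langle Y,\nabla_X Z\rangle$ from the Koszul formula and recognizing $X\langle Y,Z\rangle$ after massive cancellation; and (iv) torsion-freeness, obtained by computing $\langle\nabla_X Y-\nabla_Y X,Z\rangle$ and observing that five of the six Koszul terms cancel against their swapped counterparts, leaving $\langle[X,Y],Z\rangle$.

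The main obstacle --- really the only substantive one --- is the bookkeeping in step (ii), the Leibniz rule. The derivative factors $X(f)$, $Y(f)$, $Z(f)$ enter through the Lie brackets in all six Koszul terms, and one must check that everything other than $f \cdot 2\langle\nabla_X Y,Z\rangle$ collapses into exactly $2X(f)\langle Y,Z\rangle$, which then furnishes the $X(f)Y$ summand after dualizing with $g$. Once this calculation is in hand, the remaining verifications are symmetric manipulations of the same type, and the theorem follows.
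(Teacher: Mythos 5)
The paper does not prove this statement: it appears in Section~2 as standard background material, stated without proof and attributed to the cited references (\cite{jost2008riemannian}, \cite{joyce2000compact}), so there is no in-paper proof to compare against. That said, your outline is the standard textbook argument --- derive the Koszul identity by cyclically combining the three instances of metric compatibility and absorbing the antisymmetric parts via torsion-freeness, then turn the identity into a definition, check $C^{\infty}(M)$-linearity in $Z$ so the right-hand side is a well-defined $1$-form, dualize through $g$, and verify the connection axioms, metric compatibility, and torsion-freeness by cancellation --- and every step you describe is correct. The only caveat is one of completeness rather than correctness: what you have written is a detailed road map, not a finished proof; the ``massive cancellation'' in metric compatibility and the Leibniz-rule bookkeeping you flag as the main obstacle would both need to be carried out explicitly, and $C^{\infty}(M)$-linearity of the expression in $X$ (your step (i)) deserves the same treatment as linearity in $Z$, since the terms $X\langle Y,Z\rangle$ and $\langle[X,Y],Z\rangle$, $\langle[X,Z],Y\rangle$ each fail to be $f$-linear on their own and only the full combination is.
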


Using the musical isomorphisms, a Riemannian metric $g$ on a manifold $M$ induces an Euclidean metric $g^{-1}$ on the vector bundle $T^*M$. Locally, we will use notation $g_{ij}$ and $g^{ij}$ to refer to $g(e_i, e_j)$ and  $g^{-1}(e_i^*, e_j^*)$ respectively. The Euclidean metrics $g$ and $g^{-1}$ induce a bundle metric \textbf{$\left<.,.\right>$} on each tensor bundle of the form $E=TM^{\otimes^r}\bigotimes T^*M^{\otimes^s}$ in the following manner:\\
Let $T, S\in\Gamma(E)$ be two sections of $E$, then
\begin{equation}
	\left<T,S\right> = g_{i_1 p_1}\dots g_{i_r p_r}g^{j_1 q_1}\dots g^{j_s q_s}\:T^{i_1\dots i_r}_{j_1\dots j_s} S^{p_1\dots p_r}_{q_1\dots q_s},
\end{equation} 
where
\begin{align*}
	&T = T^{i_1\dots i_r}_{j_1\dots j_s} \: e_{i_1}\otimes\dots\otimes e_{i_r}\otimes e^*_{j_1}\otimes\dots\otimes e^*_{j_s},\\
	&S = S^{p_1\dots p_r}_{q_1\dots q_s} \: e_{p_1}\otimes\dots\otimes e_{p_r}\otimes e^*_{q_1}\otimes\dots\otimes e^*_{q_s}.
\end{align*}

This allows us to define the norm of a tensor $T$ as $\vert T\vert = \left<T, T\right>^{\frac{1}{2}}$, which is a continuous function on $M$. We will use this notation when defining function spaces.\\
The bundle of $k-$forms $\Lambda^kT^*M$ is a sub-bundle of $T^*M^{\otimes^k}$ locally spanned by sections of the form
\begin{equation*}
	e^*_{i_1}\wedge\dots\wedge e^*_{i_k} = \frac{1}{k!}\sum_{\sigma\in S_k}\operatorname{sign}(\sigma)e^*_{i_{\sigma(1)}}\otimes\dots\otimes e^*_{i_{\sigma(k)}}.
\end{equation*}
Using this identification, we can compute product and norms of $k-$forms. We can also define the formal adjoint $d^*$ of the exterior derivative $d:\Gamma\left(\Lambda^kT^*M\right)\to \Gamma\left(\Lambda^{k+1}T^*M\right)$, in order to define the \textbf{de Rham Laplacian} $\Delta = dd^*+d^*d$.\\
The Levi-Civita connection $\nabla$ of $g$ can be extended to a connection on a vector bundle of the form $TM^{\otimes^r}\bigotimes T^*M^{\otimes^s}$ (using remark \ref{connection_extension}). In particular, the induced connection is compatible with the bundle metric previously introduced.

\subsection{Functional spaces}

Let $\left(E,\left<.,.\right>,\nabla\right)$ be a vector bundle over $M$ endowed with a bundle metric and a compatible connection. \\
The space of continuous sections of $E$ that have $k$ continuous bounded derivatives we denote by $C^k(M, E)$, and admits a norm:
\begin{equation}
	\vert\vert s\vert\vert_{C^k(M, E)} = \sum_{i=0}^{k}\sup\limits_{M}\vert\nabla^is\vert,
\end{equation}
making it a Banach space. 
\subsubsection{Hölder spaces}
We define the \textbf{Hölder} space of continuous sections of $E$ that have $k+\alpha$ ($\alpha\in\left(0,1\right)$) continuous bounded derivatives to be:
	\begin{equation}
		C^{k,\alpha}(M, E) = \left\{s\in C^k(M, E)\:\vert\: \vert\vert s\vert\vert_{C^{k,\alpha}(M, E)} = \vert\vert s\vert\vert_{C^{k}(M, E)} +\left[\nabla^{k}s\right]_{\alpha}<\infty\right\},
	\end{equation}
	where the \textbf{Hölder} semi-norm $\left[\nabla^ks\right]_{\alpha}$ is defined by
	\begin{equation*}
		\left[T\right]_{\alpha} = \sup\limits_{x\in M}\sup\limits_{\substack{y\in M \\ 0<d(x,y)<\delta}}\frac{\vert T(x)-P^*_{x,y}T(y)\vert}{d(x,y)^{\alpha}},
	\end{equation*}
	$P_{x,y}$ being the parallel transport along the unique minimizing geodesic from $x$ to $y$, and $\delta$ is the injectivity radius of $g$. Notice that $C^{k,\alpha}(M, E)$ is a Banach space with the norm $\vert\vert .\vert\vert_{C^{k,\alpha}(M,E)}$. When $E$ is a trivial line bundle with the trivial connection, we denote those spaces by $C^k(M)$ and $C^{k,\alpha}(M)$ respectively.\\
	Now, we list some useful results regarding Hölder spaces.
	\begin{prop}
		\label{prop_product_func_section_holderestimate}
		For $\alpha\in\left(0, 1\right)$, $u\in C^{k,\alpha}(M)$, and $T\in C^{k,\alpha}(M, E)$, $uT\in C^{k,\alpha}(M, E)$. More precisely, there exists a positive constant $C>0$ such that
		\begin{equation*}
			\vert\vert uT\vert\vert_{C^{k,\alpha}(M, E)}\le C\left(\sum_{p=0}^{k} \vert\vert u\vert\vert_{C^{p}(M)}\vert\vert T\vert\vert_{C^{k-p,\alpha}(M, E)}+\vert\vert u\vert\vert_{C^{p,\alpha}(M)}\vert\vert T\vert\vert_{C^{k-p}(M, E)}\right)
		\end{equation*} 
		\begin{proof}
			Notice that $u(x)T(x) - u(y) P^*_{x,y}T(y) = u(x)\left(T(x) - P^*_{x,y}T(y)\right) + P^*_{x,y}T(y)\left(u(x) - u(y)\right)$. Consequently, we obtain that:
			\begin{equation*}
				\left[uT\right]_{\alpha}\le \vert\vert u\vert\vert_{C^{0}(M)}\left[T\right]_{\alpha}+\vert\vert T\vert\vert_{C^{0}(M,E)}\left[u\right]_{\alpha}
			\end{equation*}
			which then implies that 
			\begin{align*}
				\vert\vert uT\vert\vert_{C^{0,\alpha}(M,E)}&\le \vert\vert u\vert\vert_{C^{0}(M)}\vert\vert T\vert\vert_{C^{0}(M,E)} + \left[uT\right]_{\alpha} \\
				&\le \vert\vert u\vert\vert_{C^{0}(M)}\vert\vert T\vert\vert_{C^{0}(M,E)} +\vert\vert u\vert\vert_{C^{0}(M)}\left[T\right]_{\alpha}+\vert\vert T\vert\vert_{C^{0}(M,E)}\left[u\right]_{\alpha}\\
				&\le \vert\vert u\vert\vert_{C^{0,\alpha}(M)}\vert\vert T\vert\vert_{C^{0}(M,E)} + \vert\vert u\vert\vert_{C^{0}(M)}\vert\vert T\vert\vert_{C^{0,\alpha}(M,E)}.
			\end{align*} 
			For $k=1$ we use the Leibniz rule to obtain that $\nabla(uT)(x)-\nabla(uT)(y) = \nabla u(x)\otimes\left(T(x)-T(y)\right)+T(y)\left(\nabla u(x)-\nabla u(y)\right)+\nabla T(x)\left(u(x)-u(y)\right)+u(y)\left(\nabla T(x)-\nabla T(y)\right)$. This implies that:
			\begin{equation}
				\label{inequality_0}
				\left[\nabla(uT)\right]_{\alpha}\le \vert\vert\nabla u\vert\vert_{C^{0}(M)}\left[T\right]_{\alpha} + \vert\vert T\vert\vert_{C^{0}(M,E)}\left[\nabla u\right]_{\alpha} +\vert\vert\nabla T\vert\vert_{C^{0}(M,E)}\left[u\right]_{\alpha}+\vert\vert u\vert\vert_{C^{0}(M)}\left[\nabla T\right]_{\alpha}
			\end{equation}
			Then we use the fact that $\vert\vert uT\vert\vert_{C^{1}(M,E)}\le \vert\vert u\vert\vert_{C^{0}(M)}\vert\vert T\vert\vert_{C^{1}(M,E)}+\vert\vert\nabla u\vert\vert_{C^0(M)}\vert\vert T\vert\vert_{C^{0}(M,E)}$ combined with equation $(\ref{inequality_0})$ to prove the case $k=1$. We proceed by induction to prove the result for $k>1$. 
			\end{proof}
	\end{prop}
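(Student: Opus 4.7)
The plan is to prove the proposition by induction on $k$, starting from the base case $k=0$ and using the Leibniz rule together with the inductive hypothesis for the step.

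For the base case $k=0$, I would exploit the algebraic identity
\begin{equation*}
u(x)T(x)-u(y)P^*_{x,y}T(y) = u(x)\bigl(T(x)-P^*_{x,y}T(y)\bigr)+P^*_{x,y}T(y)\bigl(u(x)-u(y)\bigr),
\end{equation*}
which is the natural way to split the Hölder difference into a ``variation in $T$'' part multiplied by $u$ and a ``variation in $u$'' part multiplied by $T$. Since the parallel transport $P_{x,y}$ along the unique minimizing geodesic is a linear isometry between the fibers (the Levi--Civita connection is metric), it preserves norms and commutes with scalar multiplication, so dividing by $d(x,y)^{\alpha}$ and taking suprema yields
\begin{equation*}
[uT]_{\alpha}\le\|u\|_{C^0(M)}[T]_{\alpha}+\|T\|_{C^0(M,E)}[u]_{\alpha}.
\end{equation*}
Adding $\|uT\|_{C^0(M,E)}\le\|u\|_{C^0(M)}\|T\|_{C^0(M,E)}$ then gives the $k=0$ estimate.

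For the inductive step I would use the Leibniz rule $\nabla(uT)=du\otimes T+u\,\nabla T$, which is valid because the chosen connection on $E$ has been extended to tensor products via Remark \ref{connection_extension} and is compatible with the bundle metric. Iterating gives
\begin{equation*}
\nabla^{k}(uT)=\sum_{p=0}^{k}\binom{k}{p}\nabla^{p}u\otimes\nabla^{k-p}T,
\end{equation*}
where the tensor product is interpreted via suitable symmetrizations so that the formula holds at the level of covariant derivatives on $T^*M^{\otimes k}\otimes E$. Applying the $k=0$ result to each summand (with $\nabla^p u$ playing the role of the scalar factor after contracting with appropriate test tensors, or more cleanly by applying the $k=0$ Hölder estimate to the section $\nabla^{p}u\otimes\nabla^{k-p}T$) yields a bound by $\|\nabla^{p}u\|_{C^0}[\nabla^{k-p}T]_{\alpha}+\|\nabla^{k-p}T\|_{C^0}[\nabla^{p}u]_{\alpha}$, and summing over $p$ gives the desired inequality after collecting terms in the prescribed form $\|u\|_{C^p}\|T\|_{C^{k-p,\alpha}}+\|u\|_{C^{p,\alpha}}\|T\|_{C^{k-p}}$.

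The main obstacle will be a careful bookkeeping of the parallel transport in the tensor-product term $\nabla^{p}u\otimes\nabla^{k-p}T$: the parallel transport $P^*_{x,y}$ acting on the tensor bundle $T^*M^{\otimes p}\otimes T^*M^{\otimes(k-p)}\otimes E$ is induced diagonally from the transport on each factor, so one must verify that the splitting identity used in the base case extends componentwise, relying on the fact that parallel transport along geodesics is linear and preserves the tensor metric. Once this is noted the estimate is a mechanical consequence of the triangle inequality and the binomial expansion, and the induction closes.
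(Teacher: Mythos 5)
Your proposal is correct and follows essentially the same route as the paper: the same splitting of the Hölder difference for the base case $k=0$, and the Leibniz rule to reduce the higher-order seminorms to the base case (the paper carries out $k=1$ explicitly and then inducts, which unfolds to exactly the binomial expansion you write down). Your remark that the parallel transport acts diagonally on the tensor factors is a welcome precision that the paper glosses over, but it does not change the argument.
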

	
	\begin{remq}
		For any non-negative real value $\theta$, we will denote by $C^{\theta}(M, E)$ the Hölder space $ C^{\lfloor\theta\rfloor, \theta-\lfloor\theta\rfloor}(M, E)$.
	\end{remq}
	
	\begin{theo}[The Mean Value Theorem]
		Let $V$ and $W$ be two normed vector spaces, $\Omega\subset V$ a convex subset of $V$ and $f\in C^1(\Omega, W)$. Let $a,b\in\Omega$, and suppose that there exists a positive constant $M$ such that 
		\begin{equation*}
			\vert\vert f'(ta+(t-1)b)\vert\vert \le M\:,\:\text{ for all }t\in\left[0,1\right].
		\end{equation*}
		Then we have:
		\begin{equation*}
			\vert\vert f(a)-f(b)\vert\vert\le M\vert\vert a-b\vert\vert.
		\end{equation*}
	\end{theo}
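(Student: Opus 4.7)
The plan is to reduce the inequality on $V$ to a one-dimensional statement by restricting $f$ to the line segment joining $a$ and $b$, which lies entirely in $\Omega$ by convexity. Concretely, I would define $\gamma:[0,1]\to\Omega$ by $\gamma(t) = ta+(1-t)b$ and then consider the composition $g = f\circ\gamma: [0,1]\to W$. Since $f$ is $C^1$ and $\gamma$ is affine, $g$ is $C^1$, and the chain rule gives $g'(t) = f'(\gamma(t))(a-b)$. Using the operator norm bound assumed in the hypothesis, one immediately obtains
\begin{equation*}
\Vert g'(t)\Vert \le \Vert f'(\gamma(t))\Vert\,\Vert a-b\Vert \le M\Vert a-b\Vert\quad\text{for all } t\in[0,1].
\end{equation*}
The desired inequality then reduces to the scalar-parameter statement $\Vert g(1)-g(0)\Vert\le M\Vert a-b\Vert$.

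To close the argument, I would prove this one-variable mean value inequality for $W$-valued $C^1$ maps. The cleanest route is to invoke the fundamental theorem of calculus in the form
\begin{equation*}
g(1)-g(0) = \int_0^1 g'(t)\,dt,
\end{equation*}
which is valid for a continuously differentiable $W$-valued function on a compact interval (the integral being understood as a Riemann integral of a continuous function into a normed space, constructed via uniformly convergent step-function approximations). Taking norms and applying the standard estimate $\Vert\int_0^1 g'(t)\,dt\Vert\le \int_0^1 \Vert g'(t)\Vert\,dt$ yields $\Vert g(1)-g(0)\Vert \le M\Vert a-b\Vert$, which is exactly the conclusion.

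If one prefers not to rely on vector-valued integration (for instance if $W$ is not assumed complete), the alternative is a standard epsilon-continuity argument: for any $\varepsilon>0$, set $S_\varepsilon = \{t\in[0,1]\mid \Vert g(t)-g(0)\Vert \le (M\Vert a-b\Vert+\varepsilon)t + \varepsilon\}$, show $S_\varepsilon$ is closed and nonempty, and use the definition of $g'$ together with the supremum principle to show $\sup S_\varepsilon = 1$. Letting $\varepsilon\to 0$ recovers the inequality. The main (and really only) subtlety in the whole proof is this step, since the equality version of the mean value theorem fails for vector-valued functions; once the one-dimensional inequality is in hand, the multi-variable statement follows immediately from the reduction in the first paragraph.
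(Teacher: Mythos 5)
Your proposal is correct and is the classical argument (restriction to the segment, then the one-variable mean value inequality via vector-valued integration or the $\varepsilon$-sublevel-set argument); the paper itself states this theorem without proof, as a standard fact, so there is nothing to compare against. Two small remarks: you rightly flag that the Riemann-integral route tacitly needs completeness of $W$ and supply the completeness-free alternative, and you implicitly correct the paper's typo $f'(ta+(t-1)b)$ to the intended $f'(ta+(1-t)b)$.
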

	As a consequence of the previous theorem, we have 
	\begin{prop}
		Let $E$ be a vector bundle over $M$ and $T\in C^1(M, E)$. Then there exist a positive constant $C$ depending only on the constant $\delta$ used in the definition of Hölder spaces, such that:
		\begin{equation}
			\vert\vert T\vert\vert_{C^{0,\theta}(M, E)}\le C\vert\vert T\vert\vert_{C^{1}(M,E)}, \forall\theta\in\left(0, 1\right)
		\end{equation}
	\end{prop}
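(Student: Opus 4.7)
The plan is to bound the Hölder seminorm $[T]_\theta$ by $\|\nabla T\|_{C^0(M,E)}$ using the Mean Value Theorem stated just above, applied along minimizing geodesics, and then exploit the restriction $d(x,y)<\delta$ in the definition of $[\,\cdot\,]_\theta$ to convert the linear bound into a $\theta$-Hölder bound.

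Concretely, fix $x,y\in M$ with $0<d(x,y)<\delta$. By definition of the injectivity radius, there is a unique minimizing geodesic $\gamma\colon[0,1]\to M$ from $x$ to $y$, with $|\dot\gamma(t)|\equiv d(x,y)$. Let $P_t\colon E_x\to E_{\gamma(t)}$ denote parallel transport along $\gamma$ and consider the $E_x$-valued curve $h(t)=P_t^{-1}T(\gamma(t))$. A standard computation using that $\nabla$ is the connection extending $\nabla^E$ and that $P_t$ is parallel gives $h'(t)=P_t^{-1}\bigl(\nabla_{\dot\gamma(t)}T\bigr)_{\gamma(t)}$, so
\begin{equation*}
|h'(t)|\le |\nabla T|_{\gamma(t)}\,|\dot\gamma(t)|\le \|\nabla T\|_{C^0(M,E)}\,d(x,y).
\end{equation*}
Applying the Mean Value Theorem to $h$ on $[0,1]$ (with the norm on the fiber $E_x$) yields
\begin{equation*}
\bigl|T(x)-P^*_{x,y}T(y)\bigr|=|h(0)-h(1)|\le \|\nabla T\|_{C^0(M,E)}\,d(x,y).
\end{equation*}

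Dividing by $d(x,y)^\theta$ and using $d(x,y)<\delta$ gives
\begin{equation*}
\frac{|T(x)-P^*_{x,y}T(y)|}{d(x,y)^\theta}\le \|\nabla T\|_{C^0(M,E)}\,d(x,y)^{1-\theta}\le \delta^{1-\theta}\,\|\nabla T\|_{C^0(M,E)},
\end{equation*}
so $[T]_\theta\le \delta^{1-\theta}\|\nabla T\|_{C^0(M,E)}$. Combining with the trivial inequality $\|T\|_{C^0}\le \|T\|_{C^1}$ produces $\|T\|_{C^{0,\theta}(M,E)}\le (1+\delta^{1-\theta})\|T\|_{C^1(M,E)}$. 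Since $\delta^{1-\theta}\le \max(1,\delta)$ for every $\theta\in(0,1)$, we may take $C=1+\max(1,\delta)$, which depends only on $\delta$ and is independent of $\theta$.

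There is no real obstacle here; the only mildly delicate step is justifying the identity $h'(t)=P_t^{-1}(\nabla_{\dot\gamma(t)}T)$, which is a direct consequence of compatibility of $\nabla$ with parallel transport and the chain rule in local frames along $\gamma$. The use of the positive injectivity radius is crucial both to guarantee the uniqueness of the minimizing geodesic connecting $x$ to $y$ and to make the constant $\delta^{1-\theta}$ finite and $\theta$-uniform.
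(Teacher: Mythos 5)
Your proof is correct and follows exactly the route the paper intends: the paper gives no explicit argument and simply states the proposition as a consequence of the Mean Value Theorem quoted just before it, which is precisely what you carry out (pull back $T$ along the unique minimizing geodesic via parallel transport, bound the derivative of the resulting fiber-valued curve by $\|\nabla T\|_{C^0}\,d(x,y)$, apply the MVT, then divide by $d(x,y)^\theta$ and use $d(x,y)<\delta$ to make the constant $\theta$-uniform).
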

	
	\subsubsection{Sobolev Spaces}
	
	Let $p\ge 1$, we define the $\textbf{Lebesgue space}$ $L^p(M, E)$ as the set of locally integrable sections (elements of $L^1_{loc}(M, E)$) of $E$ such that the norm
	\begin{equation*}
		\vert\vert s\vert\vert_{L^p(M, E)} = \left(\int_M \vert s\vert^p dV \right)^{\frac{1}{p}},
	\end{equation*}
	is finite. Given a non-negative integer $k$, we define the $\textbf{Sobolev space}$ 
	\begin{equation*}
		W^{k,p}(M, E)=\left\{s\in L^p(M, E)\:\vert\: \nabla^is\in L^p(M, E)\right\},
	\end{equation*}
	with the norm
	\begin{equation*}
		\vert\vert s\vert\vert_{W^{k,p}(M, E)}=\left(\sum_{i=0}^{k}\int_M\vert\nabla^is\vert^p dV\right)^{\frac{1}{p}}.
	\end{equation*}
	Notice that $W^{k,p}(M, E)$ is a Banach space with the norm $\vert\vert .\vert\vert_{W^{k,p}(M, E)}$. Note also that the derivatives in the previous definition are meant in the weak sense. The local Sobolev space $W^{k,p}_{loc}(M, E)$ consists of all locally integrable sections $s$ of $E$  whose restriction to any pre-compact $Q\Subset M$ lies on $W^{k,p}(Q, E)$.
	\begin{equation*}
		W^{k,p}_{loc}(M, E)=\left\{s\in L^1_{loc}(M, E)\:\vert\: \forall Q\Subset M\: :\: s_{\vert_Q}\in W^{k,p}(Q, E)\right\}.
	\end{equation*}
\subsection{Fredholm operators}
Let $\left(E,\left<.,.\right>_E,\nabla^E\right)$ and $\left(F,\left<.,.\right>_F, \nabla^F\right)$ be two vector  bundles over $M$ of ranks $s$ and $t$ respectively.\\
Before we proceed to the definition, let us recall that for any $x\in M$ there exists a neighborhood $U\subset M$ of $x$, $(e_i)_{1\le i\le s}\in \Gamma\left(E_{\vert_U}\right)$, and $(f_i)_{1\le i\le t}\in \Gamma\left(F_{\vert_U}\right)$ such that $(e_i(y))_{1\le i\le s}$ and $(f_i(y))_{1\le i\le t}$  are basis of $E_y$ and $F_y$ respectively for any $y\in U$. $(e_i)_{1\le i\le s}$ and $(f_i)_{1\le i\le t}$  are called local frames. These frames can be used to locally trivialize $E$ and $F$ respectively.
\begin{mydef}
 A linear map $P:\Gamma(E)\to \Gamma(F)$ is called a smooth linear differential operator of order $l$ if in local trivializing neighborhood it has the following form:
 \begin{equation}
 	P e = \sum_{i=1}^{l} P_{i}e
 \end{equation}
 where $s\in\Gamma(E)$, and such that $P_{i}$ is an $t\times s$ matrix whose components are of the form:
 \begin{equation*}
 \sum_{\vert\beta\vert = i}a_{\beta}(x)\nabla^{\beta}
 \end{equation*}
  $\beta$ being a multi-index with indices ranging from $1\dots n$, and $a_{\beta}$ are locally defined smooth functions. \\
  Let $\xi=\left(\xi^1,\dots,\xi^n\right)\in\mathbb{R}^n$ and $\sigma_{\xi}(P, x)$ the $t\times s$ matrix obtained from $P_l$ by replacing $\nabla^{\beta}$ by $\xi^{\beta}$ and evaluated at $x$. $\sigma(P, x)$ is called the \textbf{principal symbol} of $P$. $P$ is a linear elliptic differential operator if $\sigma_{\xi}(P, x)$ is an isomorphism for any $\xi\ne 0$.
\end{mydef}

\begin{remq}
	Note that if a linear differential operator $P:\Gamma(E)\to\Gamma(F)$ is elliptic, then $\operatorname{rank}(E)=\operatorname{rank}(F)$.
\end{remq}

\begin{mydef}
	The formal adjoint of a linear differential operator $P:\Gamma(E)\to\Gamma(F)$ is a smooth linear operator $P^*:\Gamma(F)\to\Gamma(E)$ satisfying:
	\begin{equation}
		\label{l2_product_vectorbundle}
		\int_M \left<P e,f\right>_F\:\operatorname{dV} = \int_M \left<e,P^*f\right>_F\:\operatorname{dV} \text{ for all }e\in C_c^{\infty}(M, E)\text{, and } f\in C_c^{\infty}(M, F).
	\end{equation}
	$P$ is elliptic if and only if $P^*$ is. The formal adjoint $P^*$ depends on the choice of the bundles metrics on $E$ and $F$ as well as the Riemannian metric $g$ on $M$.
\end{mydef}

\begin{theo}[Schauder estimates]
	\label{schauder_estimate}
	Let $E$ and $F$ be vector bundles over $M$, $\Omega\Subset M$ be a bounded domain, $K$ a compact subset of $\Omega$, and $L:\Gamma(E)\to\Gamma(F)$ a linear elliptic differential operator of order $k$ with coefficients in $C^{k,\theta}(\Omega)$, where $k\ge 0$ and $\theta\in\left(0, 1\right)$.\\
	 Then, there exists a positive constant $C(K, \Omega, g, \theta, l, \text{coefficients of L})$ such that for any $u\in C^{k+l,\theta}(U, E)$ we have that:
	\begin{equation}
		\vert\vert u\vert\vert_{C^{k+l,\theta}(K, E)}\le C\left(\vert\vert L(u)\vert\vert_{C^{l,\theta}(\Omega, F)}+\vert\vert u\vert\vert_{C^{0}(\Omega,E)}\right).
	\end{equation} 
\end{theo}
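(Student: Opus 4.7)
The plan is to reduce the statement to the classical Euclidean Schauder estimate for elliptic systems and then patch things together using a finite cover of $K$. Since $K\Subset\Omega\Subset M$ is compact, I can fix a finite cover of $K$ by geodesic balls $B_i=B(x_i,r_i)$ with concentric larger balls $B'_i=B(x_i,2r_i)\Subset\Omega$, where each $r_i$ is chosen smaller than the injectivity radius and small enough that $E$ and $F$ both admit smooth local frames over $B'_i$. On each $B'_i$, pick normal coordinates centered at $x_i$ together with the chosen local frames; this identifies $E_{|B'_i}$ and $F_{|B'_i}$ with trivial bundles $\mathbb{R}^s,\mathbb{R}^t$ (which must have equal rank by the ellipticity remark).

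Next, I would express $L$ in these local trivializations. In normal coordinates the Christoffel symbols are smooth, so the covariant derivatives $\nabla^\beta$ expand as partial derivatives plus lower-order terms whose coefficients are smooth; together with the hypothesis that the original coefficients lie in $C^{k,\theta}(\Omega)$, this presents $L$ on $B'_i$ as a linear elliptic differential operator of order $l$ on a Euclidean ball with $C^{k,\theta}$ coefficients acting on $\mathbb{R}^s$-valued functions. The principal symbol in these coordinates equals $\sigma_\xi(L,x)$ up to an invertible change of basis, so ellipticity is preserved.

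At this point I would invoke the interior Schauder estimate for linear elliptic systems in Euclidean space (Douglis--Nirenberg / Agmon--Douglis--Nirenberg, or for a single equation the standard Gilbarg--Trudinger estimate applied componentwise once we diagonalize the principal part) to get
\begin{equation*}
\|u\|_{C^{k+l,\theta}(B_i,E)}\le C_i\bigl(\|L(u)\|_{C^{k,\theta}(B'_i,F)}+\|u\|_{C^0(B'_i,E)}\bigr),
\end{equation*}
where the $C^{k+l,\theta}$ and $C^{k,\theta}$ norms on the left and inside the parentheses are computed with respect to the local frames and Euclidean coordinates. The constant $C_i$ depends on the ellipticity constant, the $C^{k,\theta}$ norms of the coefficients on $\overline{B'_i}$, and the geometry of the balls.

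Finally, I would compare intrinsic norms with local-frame norms. Since the parallel transport $P_{x,y}$ and the trivializing frame differ by a $C^\infty$ matrix-valued function of $(x,y)$ that equals the identity at $x=y$, and since geodesic distance is bilipschitz equivalent to Euclidean distance in normal coordinates on each $\overline{B'_i}$, both the pointwise norm $|\nabla^j u|$ and the Hölder seminorms computed intrinsically are bounded above and below by their Euclidean/frame counterparts on each $\overline{B'_i}$ (with constants depending on $g$ and the frame). Summing the local estimates over the finite cover of $K$ and using $K\subset\bigcup B_i\subset\bigcup B'_i\subset\Omega$ yields the claimed global estimate on $K$ with a constant $C$ depending on $K,\Omega,g,\theta,l$ and the coefficients of $L$. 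The main obstacle is purely bookkeeping: carefully tracking how the intrinsic Hölder seminorm (defined via parallel transport along minimizing geodesics) compares with the flat seminorm on each coordinate patch so that the constants combine into a single $C$; the analytic content is entirely imported from the well-known Euclidean theory.
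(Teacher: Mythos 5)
The paper does not prove this statement: Theorem \ref{schauder_estimate} is quoted as standard background (from the elliptic theory summarized out of \cite{joyce2000compact} and the Euclidean literature), so there is no in-paper argument to compare against. Your localization strategy --- cover $K$ by finitely many geodesic balls compactly contained in $\Omega$, trivialize $E$ and $F$ and write $L$ in normal coordinates as an elliptic system with H\"older coefficients, import the interior Agmon--Douglis--Nirenberg estimates, and then translate between intrinsic and coordinate norms --- is exactly the standard proof of this result, and it is correct in outline.

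Two small points are worth tightening. First, ``diagonalize the principal part'' is not available for a general elliptic system in the sense of the paper (principal symbol an isomorphism for $\xi\neq 0$); you should rely on the ADN system estimates outright rather than on a componentwise reduction. Second, the one genuinely nonlocal step in the bookkeeping is the H\"older seminorm itself: the intrinsic seminorm on $K$ takes a supremum over pairs $x,y$ with $0<d(x,y)<\delta$, and such a pair need not lie in a single ball $B_i'$ of your cover. The standard fix is a Lebesgue-number argument: if $d(x,y)$ is below the Lebesgue number $r$ of the cover of $K$, both points lie in one $B_i'$ and the local estimate applies; if $d(x,y)\ge r$, the difference quotient is bounded by $2r^{-\theta}\sup_K|\nabla^{k+l}u|$, which is already controlled by the local estimates. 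With that inserted, the constants combine into a single $C$ as you claim. (You also interchange the roles of $k$ and $l$ in a couple of places --- the operator has order $k$ and the estimate is $\|Lu\|_{C^{l,\theta}}$ on the right --- but this mirrors an indexing inconsistency already present in the statement and does not affect the argument.)
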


\begin{remq}
	Let $f\in C^2(M)$, and assume that $f$ attains its maximum (minimum) at a point $p\in M$. Then
	\begin{equation*}
		\Delta f(p) \le 0\:\left(\Delta f(p)\ge 0\right)\:,\:\text{ and } df(p)=0
	\end{equation*}
	As a consequence, let $\mathcal{A} = \Delta + X$ where $X$ is a smooth vector field, and $f\in C^2(M)$. If $f$ attains its maximum (minimum) at a point $p\in M$, then $\mathcal{A}f(p)\le0$ ($\mathcal{A}f(p)\ge0$).
\end{remq}

\begin{mydef}
	Let $V$ and $W$ be two Banach spaces and $P\in\mathcal{L}(V, W)$, $\mathcal{L}(V, W)$ being the set of continuous linear maps from $V$ to $W$. Then, $P$ is a \textbf{Fredholm} operator if:
	\begin{itemize}
		\item $\operatorname{dim(ker(P))}$ is finite.
		\item $\operatorname{ran(P)}$ is a closed sub-space of $W$ with finite codimension. Actually, finite codimension implies closedness.
	\end{itemize}
	In this case, we define the index of $P$ by the equation:
	\begin{equation}
		\operatorname{index} P = \operatorname{dim(ker(L))}- \operatorname{dim(coker(L))} .
	\end{equation}
	In particular, an isomorphism between $V$ and $W$ is Fredholm of index zero.
\end{mydef}
\begin{mydef}
	Let $V$ and $W$ be two Banach spaces and $P\in\mathcal{L}(V, W)$. Then, $P$ is a \textbf{compact} operator if the image under $P$ of any bounded sequence in $V$ contains a convergent sub-sequence in $W$. 
\end{mydef}
\begin{remq}
	The index of a Fredholm operator does not change under perturbation by a compact operator. In other words, if $P$ and $K$ are a Fredholm operator and a compact operator respectively, then, $P+K$ is also Fredholm with the same index as $P$.
\end{remq}
The following theorem is partly a consequence of theorem \ref{schauder_estimate} , which in particular states that linear elliptic operators over a bounded domain are Fredholm. 
\begin{theo}[Theorem 1.5.4 \cite{joyce2000compact}]
	\label{theorem_elliptic_operator}
	Let $k>0$ and $l\ge k$ be integers, and $\theta\in\left(0, 1\right)$. Suppose that $E$ and $F$ are vector bundles over a compact manifold $M$, equipped with bundle metrics. Suppose also that $P:\Gamma(E)\to\Gamma(F)$ is a linear elliptic operator of order $k$ with $C^{l,\theta}$ coefficients.
	Then
	\begin{itemize}
		\item $P^*$ is elliptic with $C^{l-k,\theta}$ coefficients, and both $ker P$, $ker P^*$ are finite sub-spaces of $C^{k+l,\theta}(M, E)$ and $C^{l,\theta}(M, F)$ respectively.
		\item  If $f\in C^{l,\theta}(M, F)$ then there exists $u\in C^{k+l,\theta}(M, E)$ such that $P u = f$ if and only if $f\perp ker P^*$ ($f$ is in the subspace $F/ ker P^*$), and if one requires that $u\perp ker P$ then $u$ is unique.
	\end{itemize}
\end{theo}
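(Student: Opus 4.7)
The plan is to deduce everything from the interior Schauder estimate of Theorem \ref{schauder_estimate} together with an Arzel\`a--Ascoli compactness argument on the compact manifold $M$. First I would verify that $P^*$ is elliptic with $C^{l-k,\theta}$ coefficients: at each point $x$ the principal symbol $\sigma_\xi(P^*, x)$ is the adjoint of $\sigma_\xi(P, x)$ with respect to the bundle metrics, hence an isomorphism whenever $\xi \ne 0$, and its coefficients are obtained by differentiating those of $P$ at most $k$ times. Next, since $M$ is compact, I would cover it by finitely many coordinate domains $\Omega_i \Subset U_i$ and compact sets $K_i \Subset \Omega_i$ with $\bigcup_i K_i = M$, and add the local Schauder estimates to obtain a global bound
\begin{equation*}
\|u\|_{C^{k+l,\theta}(M,E)} \le C\bigl(\|Pu\|_{C^{l,\theta}(M,F)} + \|u\|_{C^{0}(M,E)}\bigr)
\end{equation*}
valid for every $u \in C^{k+l,\theta}(M, E)$, and the analogous estimate for $P^*$.

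For the finite-dimensionality of $\ker P$, any $u \in \ker P$ with $\|u\|_{C^0}\le 1$ satisfies $\|u\|_{C^{k+l,\theta}}\le C$ by the global estimate. By Arzel\`a--Ascoli on the compact $M$, bounded sets in $C^{k+l,\theta}$ are relatively compact in $C^0$, hence the closed unit ball of $\ker P$ equipped with the $C^0$-norm is compact; Riesz's lemma then forces $\dim \ker P < \infty$. The same reasoning applies to $\ker P^*$, and elliptic regularity (bootstrapped via the Schauder estimate) places $\ker P \subset C^{k+l,\theta}(M,E)$ and $\ker P^* \subset C^{l,\theta}(M,F)$.

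To show that $\operatorname{ran}(P)$ is closed with cokernel identified as $\ker P^*$, I would split $C^{k+l,\theta}(M,E)$ using the $L^2$-orthogonal complement of $\ker P$ and restrict $P$ to that complement. If $u_n$ belongs to this complement and $Pu_n \to f$ in $C^{l,\theta}$, a standard dichotomy shows that $(\|u_n\|_{C^0})$ is bounded; indeed, if not, dividing by $\|u_n\|_{C^0}$ and extracting a subsequence via Arzel\`a--Ascoli combined with the Schauder estimate produces a unit $C^0$-norm element of $\ker P$ orthogonal to $\ker P$, a contradiction. Then $(u_n)$ is bounded in $C^{k+l,\theta}$, a subsequence converges in $C^{k+l}$ to some $u$, and a final application of the Schauder estimate yields $u \in C^{k+l,\theta}$ with $Pu = f$. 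The cokernel is identified via the adjointness relation \eqref{l2_product_vectorbundle}: $L^2$-orthogonality to $\ker P^*$ characterizes elements of the closed range through Hahn--Banach, while the converse direction is immediate from integration by parts against smooth test sections.

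The main obstacle, and the reason I insist on an Arzel\`a--Ascoli upgrade rather than a weak-compactness argument, is the non-reflexivity of the H\"older scale: one cannot extract weakly convergent subsequences as in the $L^p$ or $W^{k,p}$ setting, and the H\"older regularity of the candidate solution must be recovered from a further application of the Schauder estimate rather than from lower semicontinuity of the target norm.
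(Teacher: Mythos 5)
The paper does not prove this statement: it is quoted verbatim from Joyce (Theorem 1.5.4 of \cite{joyce2000compact}) with only the remark that it is ``partly a consequence'' of the local Schauder estimate. So your proposal has to stand on its own, and most of it does: the computation of the symbol and coefficient regularity of $P^*$, the patching of local Schauder estimates into a global a priori bound on the compact $M$, the Arzel\`a--Ascoli plus Riesz argument for $\dim\ker P<\infty$ and $\dim\ker P^*<\infty$, and the contradiction argument showing that $P$ restricted to the $L^2$-complement of $\ker P$ has closed range in $C^{l,\theta}(M,F)$ are all correct and are the standard route to the Fredholm property in the H\"older scale.

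The genuine gap is in the solvability criterion. Your argument establishes $\operatorname{ran}(P)\subset\left(\ker P^*\right)^{\perp}$ (integration by parts) and that $\operatorname{ran}(P)$ is closed, but the converse inclusion is exactly the hard half of the theorem, and the Hahn--Banach step you invoke does not deliver it: a continuous linear functional on $C^{l,\theta}(M,F)$ that annihilates $\operatorname{ran}(P)$ is a rather general distribution-like object and is \emph{not} automatically of the form $g\mapsto\int_M\langle g,v\rangle\,\operatorname{dV}$ for a section $v$, let alone for $v\in\ker P^*$. Without that representation you cannot conclude that the annihilator of the range is spanned by $\ker P^*$, so closedness alone leaves the codimension of the range unidentified. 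The missing idea is to first run the duality argument where it is legitimate --- in $L^2$ or $W^{l,2}$, where the closed range theorem for Hilbert spaces gives $\operatorname{ran}(P)^{\perp}=\ker P^*$ with $P^*$ the formal adjoint acting distributionally --- then use elliptic regularity twice: once to show every distributional element of $\ker P^*$ is in $C^{l,\theta}(M,F)$, and once to show that a weak solution $u$ of $Pu=f$ with $f\in C^{l,\theta}$ lies in $C^{k+l,\theta}(M,E)$. (Equivalently, one can construct a parametrix $Q$ with $PQ=I-S$, $S$ smoothing, which exhibits the range as having finite codimension equal to $\dim\ker P^*$.) Either repair is standard, but as written your proof proves Fredholmness without proving the stated characterization of the image.
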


The notation $\perp$ refers to the $L^2$ inner product defined in equation $(\ref{l2_product_vectorbundle})$. The previous theorem is a statement of the Fredholm alternative.

	\section[Quasi-Asymptotically conical manifolds]{Quasi-Asymptotically conical manifolds}
\label{section_QAC}
	
	Manifolds with fibred corners are a powerful tool to encode the asymptotic behavior of Riemannian metrics in term of the Lie algebra of vector fields. This section is a rather bare-bones introduction to the subject, the main source of which are the work of \text{Richard Melrose}, \cite{albin_witt}, \cite{debord2011pseudodifferential}, \cite{rochon_QAC} and \cite{kottke2021quasi}.
	
\subsection{Manifold with corners}
	
\begin{mydef}
	\label{def_stratefied_space}
	A $\textbf{smoothly stratified}$ space $X$ of dimension $n$ is a metrizable, locally compact, second countable space which decomposes into a locally finite union of locally closed $\textbf{strata}$ $\mathcal{S}=\left\{S_{\alpha}\right\}$, where each $S_{\alpha}$ is a smooth manifold of dimension $\text{dim S}_{\alpha} \le n$. The set of strata $\mathcal{S}$ obeys the following properties:
\begin{itemize}
	\item[$\left(i\right)$] Each strata $S$ is endowed with a tubular neighborhood $T_{S}$ and a radial function in the tubular neighborhood $\rho_{S}: T_{S}\to \left[0, 1\right)$ such that $\rho_{S}^{-1}(0)= S$, together with a continuous retraction $\pi_{S}:T_{S}\to S$.
	\item[$\left(ii\right)$] If $S_{\alpha}, S_{\beta}\in\mathcal{S}$, then $T_{S_{\alpha}}\cap S_{\beta}\ne\emptyset \Leftrightarrow S_{\alpha}\cap\overline{S_{\beta}}\ne\emptyset\Leftrightarrow S_{\alpha}\subset \overline{S_{\beta}}$. In this case we write $S_{\alpha}\le S_{\beta}$. If moreover $S_{\alpha}\ne S_{\beta}$ then we write $S_{\alpha}< S_{\beta}$. This induces a partial order on the set of strata $\mathcal{S}$.
	\item[$\left(iii \right)$] The retraction $\pi_{S}:T_{S}\to S$ is a locally trivial fibration with fibre the cone $C\left(L_S\right)$ over some compact stratified space $L_S$.
	\item[$\left(iv \right)$] If we let $X_i$ be the union of strata of dimension less than or equal to $i$, then we obtain a filtration $\emptyset\subset X_1\subset\cdots\subset X_n=X$, $X_{n-1}$ being the singular set and $X\backslash X_{n-1}$ the regular set.
\end{itemize}
\end{mydef}

\begin{remq}
	Although we don't specify any restriction on the codimension of the singular set, in some cases (complex algebraic varieties) the singular set is at least of real codimension 2.
\end{remq}

\begin{mydef}
	Let $\left(X,\mathcal{S}\right)$ be a smoothly stratified space. The $\textbf{depth}$ of $X$ is the largest $k$ such that $S_1<S_2<\cdots<S_{k+1}$ is a totally ordered chain in $\mathcal{S}$.\\ The $\textbf{relative depth}$ of a stratum $S$ is the largest $k$ such that $S<S_1<\cdots<S_{k}$ is a totally ordered chain in $\mathcal{S}$. The $\textbf{relative depth}$ of a point $x\in X$ is the relative depth of the unique stratum that contains it.
\end{mydef}

\begin{myexmp}
	Orbifolds are a good example of stratified spaces. Indeed, let $X$ be a complex orbifold. A point $x\in X$ is singular if in an orbifold chart, its local isotropy subgroup $H_x$ is non trivial, and regular otherwise. We will denote by $\mathcal{S}=\left\{S_{\Sigma_{\alpha}}\right\}$ the canonical stratification of $X$ such that each $S_{\Sigma_{\alpha}}$ is the union of singular points with isotropy subgroups in the isomorphism class $\Sigma_{\alpha}$, $S_{\Sigma_{e}}$ being the regular stratum.
\end{myexmp}

Let $n$ be a positive integer and $k$ an integer such that $0\le k\le n$. Let us define $\mathbb{R}^n_k=\left[0,\infty\right)^k\times\mathbb{R}^{n-k}$.   we define the set
\begin{equation}
	\partial_l\mathbb{R}^n_k=\left\{x\in\mathbb{R}^n_k\:\vert\: x_i=0 \text{ for exactly $l$ of the first $k$ indices }\right\}.
\end{equation}
An open subset of $\mathbb{R}^n_k$ is a set $\Omega = \widetilde{\Omega}\cap \mathbb{R}^n_k$ for some open set $\widetilde{\Omega}\subset \mathbb{R}^n$. We will denote by $\partial_l\Omega = \widetilde{\Omega}\cap\partial_l \mathbb{R}^n_k$ the boundary of codimension $l$ of $\Omega$. Given two open sets $\Omega_1$ and $\Omega_2$ of $\mathbb{R}^n_k$, a map $\phi:\Omega_1\to \Omega_2$ is a diffeomorphism if there exists two open sets $\widetilde{\Omega_1}$ and $\widetilde{\Omega_2}$ such that $\Omega_i=\widetilde{\Omega_i}\cap\mathbb{R}^n_k$ for $i=1,2$ and $\phi$ extends to a diffeomorphism $\phi:\widetilde{\Omega_1}\to \widetilde{\Omega_2}$ in the usual sense. Such a diffeomorphism restricts to a bijective map between boundaries of the same codimension. 

\begin{mydef}
	Let $X$ be a paracompact Hausdorff topological space. A chart with corners $\left(U, \phi\right)$ on $X$ is a homeomorphism $\phi:U\to V\subset \mathbb{R}^n_{k_{\phi}}$ for some integer $k_{\phi}$, such that $U$ and $V$ are open sets of $X$ and $ \mathbb{R}^n_{k_{\phi}}$ respectively. Two charts with corners $\left(U, \phi\right)$ and $\left(W,\psi\right)$ are compatible if $U\cap W=\emptyset$ or 
	\begin{equation*}
		\psi\circ\phi^{-1}:\phi(U\cap W)\to\psi(U\cap W)
	\end{equation*}
	is a diffeomorphism in the sense described earlier. A maximal set of compatible charts that covers $X$ is called a $\mathcal{C}^{\infty}$ structure with corners on $X$ of dimension $n$.  
	A $t-$manifold is a pair  $\left(X,\mathcal{F}= C^{\infty}(X)\right)$ such that $C^{\infty}(X)$ is the set of smooth functions on $X$ induced by some $\mathcal{C}^{\infty}$ structure with corners.\\
	We denote by $\partial_l X$ the set of boundaries of $X$ of codimension $l$, defined by:
	\begin{equation*}
		\partial_l X=\left\{p\in X\:\vert\: \text{charts around $p$ maps $p$ to $\partial_l \mathbb{R}^n_k$}\right\}.
	\end{equation*} 
	The boundary hypersurfaces of $X$ are the closure of connected components of $\partial_1 X$, the set of which will be denoted by $M_1(X)$.
\end{mydef}
\begin{mydef}[Melrose]
	\label{definition_manifold_with_corners}
	A manifold with corners $X$ of dimension $n$ and $\textbf{depth}$ at most $k$, is a $t-$manifold of dimension $n$ such that the boundary hypersurfaces of $X$ (corners of codimension 1) are embedded sub-manifolds (with corners) and such that $\partial_l X=\emptyset$ for any integer $l>k$. 
\end{mydef}
Some definitions of manifold with corners drops the second part of the previous definition. For instance, Joyce's definition (see definition 2.2 of \cite{joyce_corners}) doesn't impose any requirements on hypersurfaces. For example, the \textbf{teardrop} $T=\left\{\left(x,y\right)\in\mathbb{R}^2\:\vert\: x\ge 0, y^2\le x^2-x^4\right\}$ fits the definition of Joyce of a manifold with corners of dimension $2$, but does not qualify as such according to definition \ref{definition_manifold_with_corners} because $\partial T$ self intersects.
\begin{remq}
	We will assume that each boundary hypersurface $H_i$ of $X$ has a defining function $x_i\in \mathcal{C}^{\infty}(X)$ such that:
	\begin{itemize}
		\item[$\left(1\right)$] $H_i=x_i^{-1}(0)$;
		\item[$\left(2\right)$] $x_i$ is positive on $X\backslash H_i$;
		\item[$\left(3\right)$] $dx_i$ is nowhere vanishing on $H_i$;
		\item[$\left(4\right)$] Each point $p\in H_i$ has a local coordinate system with $x_i$ as one of its elements.
	\end{itemize}
\end{remq}

\begin{myexmp}
	As an example of a manifold with corners, we can take the product $X=X_1\times X_2$ of two connected manifolds with boundaries. In this case, $X$ has two hypersurfaces $H_1=\partial X_1\times X_2$ and $H_2=\partial X_2\times X_1$, the corner (of codimension 2) being $\partial X_1\times\partial X_2$.
\end{myexmp}

\subsubsection{QFB-metric}

\label{fibration_structure_section}
The notion of iterated fibration structure was introduced by Melrose in the context of the resolution (blowup) of smoothly stratified spaces. In fact, there is a one to one correspondence between smoothly stratified spaces and manifolds with fibred corners; see for instance propositions 2.5 and 2.6 in \cite{albin_witt}.

\begin{mydef}[Melrose]
	\label{melrose_iterated_fibration}
	Let $X$ be a manifold with corners and $\left(H_i\right)_{1\le i\le l}$ the list of boundary hypersurfaces of $X$. An $\textbf{iterated fibration structure}$ on $X$ is a collection of fibrations $\pi=\left(\pi_i\right)_{1\le i\le l}$ such that:
	\begin{itemize}
		\item[$\left(i\right)$] Each $\pi_i: H_i\to S_i$ is a fiber bundle with fiber $F_i$ where both $F_i$ and $S_i$ are manifolds with corners.
		\item[$\left(ii\right)$] If $H_{ij}=H_i\cap H_j\ne\emptyset$ then $\text{dim F}_i\ne\text{dim F}_j$.
		\item[$\left(iii\right)$] We write $H_i<H_j$ if $H_{ij}\ne\emptyset$ and $\text{dim F}_i<\text{dim F}_j$. In this case, $\pi_i:H_{ij}\to S_i$ is a surjective submersion. 
		\item[$\left(iv\right)$] The boundary hypersurfaces of $S_j$ are exactly the $S_{ij}=\pi_j\left(H_{ij}\right)$ with $H_i<H_j$. Moreover, there exists a surjective submersion $\pi_{ij}:S_{ij}\to S_i$ such that when restricted to $H_{ij}$ we have $\pi_{ij}\circ\pi_j=\pi_i$.
	\end{itemize}
\end{mydef}

The iterated fibration structure induces a partial order on the set of boundary hypersurfaces. Thus, we define the $\textbf{relative depth}$ of a boundary hypersurface $H$ as the largest $k$ such that $H<H_1<H_2<\cdots<H_{k-1}$ for some $k-1$ hypersurfaces $H_i$. Notice that if $H_i$ and $H_j$ are respectively minimal and maximal hypersurfaces , then both $F_j$ and $S_i$ are closed manifolds.
 
 Let $H_1,\dots, H_l$ be an exhaustive list of boundary hypersurfaces, and let $x_1,\dots, x_l$ be the corresponding boundary defining functions.  In what follows, we will denote by $v=\prod\limits_{k=1}^l x_k$ a total boundary defining function.

\begin{mydef}
	A $\textbf{tube system}$ for a hypersurface $H$ is a triplet $\left(\mathcal{N}_H,r_h, x_h\right)$ with $\mathcal{N}_H$ an open neighborhood of $H$ in $X$, $r_h:\mathcal{N}_H\to H$ a smooth retraction, and $\left(r_h, x_h\right):\mathcal{N}_H\to H\times[0,\infty)$ a diffeomorphism onto its image.
\end{mydef}

\begin{mydef}
	\label{def_manifold_fibered_corners}
	A $\textbf{manifold with fibred corners}$ is a manifold with corners endowed with an iterated fibration structure $\left(X,\pi\right)$. We say that the boundary defining functions are $\textbf{compatible}$ with the iterated fibration structure, if for each boundary hypersurfaces $H_i<H_j$, the restriction of $x_i$ to $H_j$ is constant along the fibers of $\pi_j:H_j\to S_j$.
\end{mydef}

We will always assume that the boundary defining functions are compatible with the iterated fibration structure in sense of definition \ref{def_manifold_fibered_corners}, and such that $x_i$ is identically equal to $1$ outside of a tubular neighborhood of $H_i$.

\begin{mydef}
	An $\textbf{iterated fibred tube system}$ of a manifold with fibred corners  $X$, is a family of tube systems $\left(\mathcal{N}_i, r_i, x_i\right)$ for $H_i\in M_1(X)$ such that for any hypersurfaces $H_i<H_j$ we have:
	\begin{equation}
		r_j\left(\mathcal{N}_i\cap\mathcal{N}_j\right)\subset\mathcal{N}_i\:,\:x_i\circ r_j = x_i\:,\:\pi_i\circ r_i\circ r_j = \pi_i\circ r_i \text{ on }\mathcal{N}_i\cap\mathcal{N}_j
	\end{equation}
	and the restriction of $x_i$ to $H_j$ is constant along the fibers of $\pi_j:H_j\to S_j$.
\end{mydef}

The existence of an iterated fibred tube system on a manifold with fibred corners is proved in lemma 1.4 of \cite{debord2011pseudodifferential}.

We will review the notion of Quasi fibred boundary metrics introduced in \cite{rochon_QAC}. Let $\left(X, \pi\right)$ be a manifold with fibred corners. We denote by:
\begin{equation}
	\mathcal{V}_b=\left\{\xi\in C^{\infty}(X; TX)\;\vert \;\xi \text{ is tangent to the hypersurfaces of $X$}\right\},
\end{equation}
the Lie algebra of $\textbf{b-vector fields}$. This intrinsic definition is equivalent the following one 
\begin{equation}
	\label{b_vector_field}
	\mathcal{V}_b=\left\{\xi\in C^{\infty}(X; TX)\;\vert \;\xi\left(x_i\right)\in x_i\mathcal{C}^{\infty}\left(X\right)\right\},
\end{equation}
which is easier to use.
\begin{mydef}
	\label{qfb_vector_field}
	A $\textbf{quasi fibred boundary vector field}$ (or $\textbf{QFB-vector field}$) is a b-vector field $\xi$ such that:
	\begin{itemize}
		\item[$\left(i\right)$] $\xi\vert_{H_i}$ is tangent to the fibers of $\pi_i$;
		\item[$\left(ii\right)$]  $\xi\left(v\right)\in v^2 \mathcal{C}^{\infty}\left(X\right)$, where $v$ is a total boundary defining function.
	\end{itemize}
	These conditions are clearly still satisfied for the Lie bracket of two such vector fields. Thus, the set of $\textbf{quasi fibred boundary}$ vector fields is a Lie algebra, which will be denoted by $\mathcal{V}_{QFB}(X)$.
\end{mydef}

\begin{remq}
	The definition of \textbf{QFB-vector fields} depends on the choice of a total boundary defining function $v\in C^{\infty}(X)$ (see lemma 1.1 of \cite{kottke2021quasi}).
\end{remq}

Using definition \ref{melrose_iterated_fibration} we can give an explicit description of QFB-vector fields. Indeed, let $H_1<H_2<\cdots<H_k$ be a totally ordered chain and $\left(x_1, y_1,x_2,y_2,\cdots, x_k, y_k, z\right)$ a local coordinate system around a point $p\in H_1\cap H_2\cap\cdots\cap H_k$ that straightens out the fibrations $\pi_i:H_i\to S_i$ such that:
\begin{itemize}
	\item  $x_i$ is a boundary defining function of $H_i$;
	\item  $y_i=\left(y^1_i,\cdots,y^{k_i}_i\right)$ for $i\in\left\{1,\cdots, k\right\}$ and $z=\left(z_1,\cdots, z_q\right)$;
	\item Each fibration $\pi_i$ corresponds to the map
	\begin{equation}
		\label{fibration_in_local_coordinate}
		\left(x_1,y_1,\cdots, \hat{x_i},y_i,\cdots, x_k,y_k,z \right)\mapsto \left(x_1,y_1,\cdots, x_{i-1},y_{i-1},y_i\right).
	\end{equation}
\end{itemize}
Using equation $(\ref{fibration_in_local_coordinate})$ we see that $\left(x_{i+1}, y_{i+1},\cdots, z\right)$ are coordinates on the fibers of $\pi_i$. Thus, the space of b-vector fields tangent to the fibers of the fibrations $\pi_i$ is locally spanned by:
\begin{equation}
	\label{local-desc_b_vector}
	\frac{\partial}{\partial z_j},\; \frac{\partial}{\partial y^l_j}, \;x_j \frac{\partial}{\partial x_j} \text{ for $j>i$} .
\end{equation}
Now, using the second part of definition \ref{qfb_vector_field} we deduce that QFB-vector fields are spanned by:
\begin{equation}
\label{local-desc_qfb_vector}
\frac{\partial}{\partial z_j},\; v_i \frac{\partial}{\partial y^j_i},\;v_{1} x_{1} \frac{\partial}{\partial x_{1} } , \;v_{i+1}\left(x_{i+1} \frac{\partial}{\partial x_{i+1} } - x_{i}\frac{\partial}{\partial x_{i}} \right), i=1\dots k-1,,
\end{equation}
where $v_i=\prod\limits_{j=i}^k x_j$.
\begin{remq}
	\label{aman_lie_structure}
	$\mathcal{V}_{QFB}\left(X\right)$ is called a $\textbf{structural Lie algebra}$ in the sense of definition 1.4 in \cite{Ammann_2004}. In particular, structural Lie algebras are finitely generated protective $C^{\infty}(X)$-modules. Thus, using the \textbf{Serre-Swan} theorem, there exists a smooth vector bundle (the $QFB-$tangent bundle) ${}^{\pi}TX\to X$ such that $\mathcal{V}_{QFB}\left(X\right) \simeq\Gamma\left({}^{\pi}TX\right)$. This vector bundle is actually a \textbf{boundary tangential Lie algebroid} (see definition 1.14 of \cite{Ammann_2004}). The same thing goes for $\mathcal{V}_{b}\left(X\right)$ and $\mathcal{V}_{}\left(X\right)$ (definition \ref{qb-vector-field} below).
\end{remq}
We will denote by  $i_{\pi}:{}^{\pi}TX\to TX$ the natural bundle map that restricts to an isomorphism over $\mathring{X}$, such that:
\begin{equation}
	\mathcal{V}_{QFB}\left(X\right)={i_{\pi}}_*\mathcal{C}^{\infty}\left(X; {}^{\pi}TX\right).
\end{equation}
The $\textbf{QFB-cotangent bundle}$ ${}^{\pi}T^*X$  is then defined as the vector bundle dual to the QFB-tangent bundle ${}^{\pi}TX$, and is locally spanned by:
\begin{equation}
	\label{qfb_forms}
	dz_j,\; \frac{dv_i}{v_i^2},\; \frac{dy^j_i}{v_i}.
\end{equation}

\begin{mydef}
 A $\textbf{quasi fibred boundary metric}$ (or \textbf{QFB-metric}) is a positive-definite tensor \\
 $g_{\pi}\in\mathcal{C}^{\infty}(\mathring{X}; \operatorname{Sym}^2\left({}^{\pi}T^*X\right))$ that restricts to a Riemannian metric on $\mathring{X}$ via the map $i_{\pi}:{}^{\pi}TX\to TX$. We say that $g_{\pi}$ is a smooth $QFB-$metric if it is smooth up to the boundary. We say that $\left(X, g_{\pi}\right)$ is a $QFB-$manifold.
\end{mydef}

\begin{mydef}
	If a manifold with fibred corners $\left(X,\pi\right)$ is such that $H_i= S_i$ and $\pi_i=Id$ for each maximal boundary hypersurface $H_i$, then a QFB-vector field is called $\textbf{quasi-asymptotically conical vector field}$ (or $\textbf{QAC-vector field}$) and in the same manner, a QFB-metric is called a $\textbf{quasi-asymptotically conical metric}$ (or $\textbf{QAC-metric}$). If $g_{QAC}$ is a $QAC-$metric on $X$, then $\left(X, g_{QAC}\right)$ is called a \textbf{QAC-manifold}.
\end{mydef}


\subsubsection{Asymptotically Conical manifolds}

As we said, manifolds with fibred corners can be used to encode asymptotic conditions on certain complete manifolds. Let us for instance consider a non-compact Riemannian manifolds $\left(M, g\right)$ of dimension $n+1$, and a compact subset $K\subset M$. Suppose that $M\backslash K$ is diffeomorphic to the non-compact ends of the Riemannian cone
$\left(\left(1,\infty\right)\times Y, g_c=dr^2+r^2h\right)$ with $\left(Y, h\right)$ a compact  Riemannian manifold (called the link of the cone). Suppose also that under such an identification we have that:
\begin{equation*}
	\vert\vert \nabla^k\left(g-g_c\right)\vert\vert = O(r^{-\epsilon-k})\text{ for all }k\in\mathbb{N}_0\text{, and some }\epsilon >0.
\end{equation*}
Then, $\left(M, g\right)$ is called an \textbf{Asymptotically Conical} (or \text{AC}) manifold. In particular,  $\left(M, g\right)$ is called  \textbf{Asymptotically Euclidean} (or \textbf{AE}) manifold if $\left(Y, h\right)$ is $\mathbb{S}^n$ equipped with the round metric, and \textbf{Asymptotically Locally Euclidean} (or \textbf{ALE}) if $Y=\mathbb{S}^n\backslash \Gamma$ where the finite subgroup $\Gamma\subset O(n)$ acts freely on $\mathbb{S}^n$. So \textbf{AC} manifolds can be seen as a generalization of \textbf{AE} and \textbf{ALE} manifolds.\\
Let $\left(C, g\right)$ be a manifold with boundary, and $g$ a $QAC-$metric on it. Then in a neighborhood of the boundary, the $\textbf{QAC-cotangent bundle}$ ${}^{\pi}T^*C$ is generated locally by
\begin{equation*}
	\frac{d\rho}{\rho^2},\; \frac{dy^j}{\rho},
\end{equation*}
 with $\rho$ is a boundary defining function, and such that the metric $g$ is of the form 
\begin{equation*}
	g= \underbrace{\frac{d\rho^2}{\rho^4} + \frac{h}{\rho^2}}_{g_0} +\eta,
\end{equation*}
where $\eta$ is some mixed terms tensor, and $h$ is a Riemannian metric on $\partial C$. If we suppose that $\vert\vert \eta\vert\vert_{g_0}=O(\rho^{\epsilon})$ (for some $\epsilon>0$), then it becomes clear that $AC-$metrics are a particular case of $QAC-$metrics on manifold with boundary. In fact, in manifold with boundary, QAC-metrics corresponds the \textbf{scattering metrics} of \cite{melrose1995geometric}.

\subsubsection{Quasi- Asymptotically Conical manifolds}

Let us start with the case of \textbf{Quasi-Asymptotically Locally Euclidean} (or \textbf{QALE}) manifolds. These were introduced by Joyce \cite{joyceQALE} to study the existence of \textbf{Kähler metrics} on the resolution of $\mathbb{C}^n\backslash \Gamma$ where $\Gamma\subset U(n)$ is a finite subgroup that does not act freely on $\mathbb{C}^n\backslash\left\{0\right\}$. In this case, fixed points are subspaces of $\mathbb{C}^n$ with potentially different isotropy subgroups of $\Gamma$. The main source of examples of $QALE-$metrics are crepant resolutions of $\mathbb{C}^n\backslash \Gamma$ (see theorem 3.3 of \cite{joyceQALE}). Although, \cite{Carron_2011} showed that the Nakajima metric \cite{nakajima1999lectures} is a $QALE-$metric in the sens of Joyce.\\
Mazzeo gave a description of these singular sets in terms of \textbf{iterated cone-edge} spaces \cite{Rafe_QAC}, a sub class of stratified spaces and together with Degeratu \cite{Degeratu_2017} introduced \textbf{Quasi-Asymptotically Conical manifolds} as resolution blow-ups of these manifolds (into a manifold with fibred corners). An alternative description of these metrics was given in section $1$ of \cite{rochon_QAC}. In some sense, $QAC-$manifolds generalize $QALE-$manifolds the way $AC-$manifolds generalize $ALE-$manifolds.\\
In their work \cite{rochon_QAC}, Conlon, Degeratu and Rochon built $Calabi-Yau$  $QAC-$metrics that are neither $QALE-$metrics nor Cartesian products of $AC-$metrics. Concrete examples of such metrics can be built using the following theorem
\begin{theo}[Corollary 4.10 of \cite{rochon_QAC}]
	Let $\left(D, g_D\right)$ be a Kähler-Einstein Fano orbifold with isolated singularities of complex codimension at least two with each locally admitting a Kähler crepant resolution, then $D$ admits a Kähler crepant resolution $\widehat{D}$ and the $QAC-$compactification $\widehat{X}_{QAC}$ of $K_{\widehat{D}}$ admits a Kähler $QAC-$metric asymptotic to $g_C$ (a quasi-regular Calabi-Yau cone metric on $K\backslash D$) with rate $\delta$ for any $\delta>0$.
\end{theo}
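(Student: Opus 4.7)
The plan is to adapt the Joyce-type construction of QALE Calabi--Yau metrics, in the form developed by Conlon, Degeratu and Rochon, to the Kähler--Einstein Fano base $(D,g_D)$ at hand. The construction has four main stages: (1) produce a global Kähler crepant resolution $\widehat{D}$ by patching the local Kähler crepant resolutions postulated near each singular point; (2) take the total space $K_{\widehat{D}}$ and compactify it to a manifold with fibred corners $\widehat{X}_{QAC}$ whose iterated fibration structure encodes both the conic end of $K_D$ and the orbifold strata at infinity; (3) glue local Calabi--Yau ALE models over the exceptional loci with the quasi-regular Calabi--Yau cone metric $g_C$ on $K\setminus D$ obtained from the Calabi ansatz applied to $(D,g_D)$, producing an approximate Kähler QAC-metric $\omega_0$; and (4) correct $\omega_0$ to a genuine Calabi--Yau (hence Ricci-flat Kähler) QAC-metric by solving a complex Monge--Ampère equation on weighted Hölder spaces of $QAC$ type, with the decay rate $\delta$ controlled by the choice of weight.

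For stage (1) I would start from the disjoint union of the smooth locus of $D$ with each of the given local Kähler crepant resolutions, and glue their local Kähler forms together using cutoff functions and the $\partial\overline{\partial}$-lemma on the orbifold charts, obtaining a global Kähler form on $\widehat{D}$. For stage (2), since the singularities of $D$ are isolated, the total space $K_D$ has a cone-like end whose link develops orbifold singularities along copies of the singular strata of $D$; the resolution of these singularities by the local crepant resolutions induces a resolution blowup of the corresponding cone-like strata at infinity, and the resulting space $\widehat{X}_{QAC}$ is naturally a manifold with fibred corners with one maximal hypersurface $H_{\max}$ (encoding the conic direction at infinity) and intermediate hypersurfaces encoding the singularities of $D$ in the fibre direction.

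For stage (3), away from the singularities of $D$ the Calabi ansatz on $K_D$ produces the quasi-regular Calabi--Yau cone metric $g_C$; near each singular point, the local Kähler Calabi--Yau ALE metric on the resolved canonical bundle can be chosen to match $g_C$ asymptotically on the overlap (using the asymptotic flatness of ALE metrics), so that cutoff-gluing yields a Kähler form $\omega_0$ which defines a genuine QAC-metric in the sense of Section~\ref{section_QAC} and is asymptotic to $g_C$ at some initial rate $\delta_0 > 0$. The Ricci form of $\omega_0$ then satisfies $\rho_{\omega_0} = i\partial\overline{\partial} F$ for some function $F$ which decays at a rate controlled by the gluing error.

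The hard step is stage (4). One sets up the complex Monge--Ampère equation $(\omega_0+i\partial\overline{\partial}\phi)^n = e^F \omega_0^n$ and solves it by continuity, deforming $F$ to zero. Openness follows from an isomorphism property of the linearization, which is essentially the Laplacian of $\omega_0$ on weighted $QAC$ Hölder spaces; one uses the Fredholm theory of Degeratu--Mazzeo for $\Delta$ on $QAC$-manifolds, choosing a weight $\alpha$ outside the discrete set of critical values so that $\Delta$ is an isomorphism between the appropriate weighted spaces. Closedness requires weighted $C^0$, $C^2$, and higher a priori estimates of Yau type, adapted to the $QAC$ setting: the $C^0$ estimate via Moser iteration in weighted Sobolev spaces, the $C^2$ estimate a weighted version of the Aubin--Yau estimate using the bounded curvature of $\omega_0$, and higher regularity bootstrapped via local Schauder estimates on $QAC$-cubes. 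The freedom in the decay rate $\delta$ is obtained by choosing the weight $\alpha$ in the Monge--Ampère solution to be as negative as desired while remaining non-critical. The main obstacle is verifying that the weighted Yau estimates remain valid uniformly as the weight is varied, which is the technical crux of the argument in \cite{rochon_QAC}.
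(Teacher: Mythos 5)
This statement is not proved in the paper at all: it is quoted verbatim as Corollary 4.10 of \cite{rochon_QAC} and serves only as background, namely as a source of concrete examples of Calabi--Yau $QAC$-metrics. There is therefore no internal proof to compare your argument against; the relevant comparison is with the construction carried out in \cite{rochon_QAC} itself.

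Measured against that construction, your outline captures the correct overall strategy (resolve the base, compactify $K_{\widehat{D}}$ to a manifold with fibred corners, glue local models onto the Calabi ansatz metric, and correct by solving a complex Monge--Amp\`ere equation on weighted $QAC$ H\"older spaces), but as a proof it has genuine gaps: every analytically substantive step is asserted rather than established. Concretely: (i) the local models you glue in are described as ``Calabi--Yau ALE models,'' whereas in the $QAC$ setting the models attached at the non-maximal fibred corners are themselves (asymptotically conical) Calabi--Yau metrics on the local crepant resolutions, and the whole point of the fibred-corner compactification is that the transition regions between these models and the cone metric $g_C$ must be controlled in the $QAC$ (not ALE) sense --- this is where the error term $F$ acquires its decay, and you do not estimate it; (ii) the claim that the rate $\delta$ can be made arbitrary ``by choosing the weight $\alpha$ as negative as desired while remaining non-critical'' conflates the weight in the function spaces with the decay rate of the solution: obtaining an arbitrarily fast rate requires an analysis of the indicial roots and of the polyhomogeneous expansion of the solution near each boundary hypersurface, not merely a choice of non-critical weight; and (iii) the uniformity of the weighted Yau-type a priori estimates, which you yourself identify as ``the technical crux,'' is precisely the content that a proof would have to supply. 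So the proposal is a plausible roadmap consistent with \cite{rochon_QAC}, but it does not constitute a proof of the cited result.
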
 

\subsubsection{Qb manifolds}
\begin{mydef}
	\label{qb-vector-field}
	Let $\left(X,\pi\right)$ be a QAC-manifold and ${x_{max}}$ the product of boundary defining functions of the maximal hypersurfaces of $X$. A smooth \textbf{quasi b-metric} (\textbf{Qb-metric}) is a metric $g_{Qb}$ of the form:
	\begin{equation}
		\label{qb-metric-definition}
		g_{Qb}=x^2_{max}g_{QAC}.
	\end{equation}
	for some smooth QAC-metric $g_{QAC}$. The Lie algebra of Qb-vector fields is defined as
	\begin{equation*}
		\mathcal{V}_{Qb}=\{\xi\in C^{\infty}\left(X, TX\right)\:\vert\: \sup\limits_{\mathring{X}}g_{Qb}\left(\xi,\xi\right)<\infty\}.
	\end{equation*}
	or equivalently as b-vector fields such that for each i
	\begin{itemize}
		\item $\xi\vert_{H_i}$ is tangent to the fibers of $\pi_i$ if $H_i$ is not a maximal hypersurface;
		\item  \label{qb-vector-property} $\xi v\in\frac{v^2}{x_{max}}C^{\infty}(X)$.
	\end{itemize}
\end{mydef}

\begin{remq}
	\label{remark_QAC_QB_relation}
	From the previous definition, it is easy to see that $\mathcal{V}_{QAC}\left(X\right)=x_{max}\mathcal{V}_{Qb}\left(X\right)$. 
\end{remq}

\begin{prop}
	\label{lie_brackets_of_QAC_vectors}
	Given two QAC-vector fields $\tilde{V}$ and $\tilde{W}$ we have that $\left[\tilde{V},\tilde{W}\right]\in x_{max}\mathcal{V}_{QAC}(X)$.\\
	In addition, $X(f)\in x_{max}C^{\infty}(X)$ for any $QAC-$vector field $X$ and function $f\in C^{\infty}(X)$.\\
	\begin{proof}
		Using remark \ref{remark_QAC_QB_relation}, any QAC-vector field $\tilde{V}$ is of the form $x_{max}V$ for some some  Qb-vector field $V$. Then,
		$\left[\tilde{V},\tilde{W}\right]=\left[x_{max}V,x_{max}W\right]$ for some Qb-vector fields $V$ and $W$. A straightforward computation shows that:
		\begin{equation*}
			\left[x_{max}V,x_{max}W\right] = x^2_{max}\left(\left[V, W\right]+\frac{V(x_{max})}{x_{max}}W - \frac{W(x_{max})}{x_{max}}V\right).
		\end{equation*}
		By definition \ref{qb-vector-field}, both $\frac{W(x_{max})}{x_{max}}$ and $\frac{W(x_{max})}{x_{max}}$ are in $C^{\infty}(X)$, which implies that $\left[x_{max}V,x_{max}W\right]\in x_{max}\mathcal{V}_{QAC}(X)$. The second assertion follows from the fact that $\xi(f)\in C^{\infty}(X)$ for any $Qb-$vector field $\xi$.
	\end{proof}
\end{prop}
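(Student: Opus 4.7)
The plan is to leverage the identity $\mathcal{V}_{QAC}(X) = x_{max}\mathcal{V}_{Qb}(X)$ from Remark \ref{remark_QAC_QB_relation} to reduce both assertions to computations involving Qb-vector fields together with the Leibniz rule for Lie brackets. Since $\mathcal{V}_{Qb}(X)$ is itself a Lie algebra (both the tangency-to-fibres condition and the condition $\xi v \in \frac{v^{2}}{x_{max}}C^{\infty}(X)$ are preserved under Lie brackets), closure properties inside $\mathcal{V}_{Qb}(X)$ can be used freely.

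For the first assertion, I would write $\tilde V = x_{max} V$ and $\tilde W = x_{max} W$ with $V,W \in \mathcal{V}_{Qb}(X)$ and expand the bracket via the identity
\begin{equation*}
[fV,gW] = fg[V,W] + fV(g)W - gW(f)V
\end{equation*}
with $f=g=x_{max}$. This immediately produces a factor of $x_{max}^{2}$ out front, namely
\begin{equation*}
[\tilde V,\tilde W] = x_{max}^{2}\!\left([V,W] + \tfrac{V(x_{max})}{x_{max}}W - \tfrac{W(x_{max})}{x_{max}}V\right).
\end{equation*}
To conclude that $[\tilde V,\tilde W] \in x_{max}\mathcal{V}_{QAC}(X) = x_{max}^{2}\mathcal{V}_{Qb}(X)$, it therefore suffices to show that the parenthesised expression lies in $\mathcal{V}_{Qb}(X)$.

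The key verification is that $V(x_{max})/x_{max} \in C^{\infty}(X)$ for any Qb-vector field $V$. Since every Qb-vector field is in particular a $b$-vector field, the characterisation \eqref{b_vector_field} gives $V(x_i) \in x_{i}C^{\infty}(X)$ for each boundary defining function $x_i$ of a maximal hypersurface. Writing $x_{max} = \prod_{i} x_i$ and applying the Leibniz rule,
\begin{equation*}
V(x_{max}) \;=\; \sum_{i} V(x_i)\!\!\prod_{j\neq i}\! x_j \;\in\; x_{max}C^{\infty}(X),
\end{equation*}
so $V(x_{max})/x_{max}$ is indeed smooth up to the boundary, and the correction terms $\tfrac{V(x_{max})}{x_{max}}W$ and $\tfrac{W(x_{max})}{x_{max}}V$ are Qb-vector fields by the $C^{\infty}(X)$-module structure. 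Combined with $[V,W]\in\mathcal{V}_{Qb}(X)$, this closes the first part.

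The second assertion is essentially immediate from the same factorisation: writing $X = x_{max}V$ with $V$ a Qb-vector field, one has $X(f) = x_{max}\,V(f)$, and $V(f) \in C^{\infty}(X)$ since any smooth vector field on a manifold with corners sends $C^{\infty}(X)$ to itself. I do not anticipate a substantive obstacle; the only subtlety is recognising that the apparent pole $1/x_{max}$ in the Leibniz expansion is cancelled by the $b$-tangency of $V$ to each maximal hypersurface, which is exactly the content of the smoothness of $V(x_{max})/x_{max}$.
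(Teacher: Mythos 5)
Your proof is correct and follows essentially the same route as the paper: factor $\tilde V = x_{max}V$ via $\mathcal{V}_{QAC}(X)=x_{max}\mathcal{V}_{Qb}(X)$, expand the bracket with the Leibniz identity, and observe that $V(x_{max})/x_{max}\in C^{\infty}(X)$. You usefully spell out two steps the paper leaves implicit, namely the Leibniz-rule computation showing $V(x_{max})\in x_{max}C^{\infty}(X)$ from the $b$-vector characterisation $V(x_i)\in x_i C^{\infty}(X)$, and the Lie-algebra and $C^{\infty}(X)$-module closure of $\mathcal{V}_{Qb}(X)$ needed to place the parenthesised expression in $\mathcal{V}_{Qb}(X)$.
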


\subsection{Polyhomogeneity}
Note that when defining a $QAC-$metric we only require that the given tensor is defined on $\mathring{X}$. In some cases, such tensors can be extended smoothly up to the boundary, but most of the time requiring a metric to be smooth up to the boundary is restrictive. In this section we introduce a class of tensors that, while not smooth up to the boundary, have a Taylor like asymptotic expansion near the boundary. These are called polyhomogeneous tensors.
\begin{mydef}
	An index set $K$ is a subset of $\mathbb{C}\times\mathbb{N}_0$ such that:
	\begin{align}
		&\left(z, k\right)\in K\:,\:\space \vert\left(z, k\right)\vert\to\infty\implies \text{Re } z\to\infty,\\
		&\left(z, k\right)\in K\:,\space p\in\mathbb{N}\implies\left(z+p, k\right)\in K,\\
		&\left(z, k\right)\in K\implies\left(z,p\right)\in K\:\forall\: 0\le p\le k.
	\end{align}
   An index set $K$ is a non-negative index set if it also satisfies the following conditions:
   	\begin{align}
   	&\mathbb{N}_0\times\left\{0\right\}\subset K,\\
   	&\left(z, k\right)\in K\implies Im z=0\text{ and } Rez\ge 0,\\
   	&\left(0, k\right)\in K\implies k=0.
   \end{align}
\end{mydef}

Given two index sets $G$ and $K$, we define the index set $G+K$ as follows:
\begin{equation}
	G+K =\left\{\left(z_1+z_2, k_1+k_2\right)\:\vert\:\left(z_1, k_1\right)\in G,\left(z_2, k_2\right)\in K\right\}.
\end{equation}
Note that, if $K$ and $G$ are non-negative index sets, then $K\cup G\subset K+G$. In particular, given a non-negative index set $H$, we define the index set
\begin{equation}
	H_{\infty} = \sum_{i=1}^{\infty} H = \bigcup_{i=1}^{\infty}\sum_{j=1}^{i} H.
\end{equation}
It is easy to see that $H_{\infty} + H= H_{\infty}$.
Before we define polyhomogeneous functions on a manifold with corners, let's start with the simpler case of a manifold with boundary.
\begin{mydef}
	Let $M$ be a compact manifold with boundary, and $\rho$ the boundary defining function of $\partial M$. The set of polyhomogeneous functions on $M$ with respect to an index set $K$, denoted by $\mathcal{A}^K_{phg}(M)$, is the the set of functions $f\in C^{\infty}(\mathring{M})$ such that:
	\begin{equation}
		f\sim\sum_{\left(z,k\right)\in K} a_{(z,k)}\rho^z \left(\log\rho\right)^k\:,\: a_{(z,k)}\in C^{\infty}(\partial M),
	\end{equation}
	where $\sim$ means that for any $N\in\mathbb{N}$ we have that:
	\begin{equation}
		f-\sum\limits_{\substack{\left(z,k\right)\in K \\ Re z\le N}}{} a_{(z,k)}\rho^z \left(\log\rho\right)^k\in \dot{C}^N(M),
	\end{equation}
	where $\dot{C}^N(M)$ is the set of $N$ differentiable functions on $M$ that restrict to zero on $\partial M$ together with all their derivatives up to order $N$. We define $$\dot{C}^{\infty}(M)=\bigcap_{N\in\mathbb{N}} \dot{C}^N(M).$$
\end{mydef}
\begin{remq}
	\label{inverse_of_polyhomogeneous_is_polyhomogeneous}
	Note that if $K=\mathbb{N}_0\times\left\{0\right\}$ then $\mathcal{A}^K_{phg}(M) = C^{\infty}(M)$. It is also easy to see that $\mathcal{A}^{\emptyset}_{phg}(M) = \dot{C}^{\infty}(M)$. Another important remark is that the multiplicative inverse of a positive polyhomogeneous function $f$ that is bounded away from zero, is also polyhomogeneous. This is a direct consequence of theorems $B.1$ and $B.6$ of \cite{sher_inverse_poly}.
\end{remq}
Now we are ready to define  polyhomogeneous functions on a manifold with corners $X$. 
\begin{mydef}
	An index family $\mathcal{K}$ on a manifold with corner $X$, is an assignment of an index set $\mathcal{K}(H)$ to each hypersurface $H\in M_1(X)$.
	If $F$ is a boundary surface of $X$, then we will denote by $\mathcal{K}_{\vert_F}$ the family index that assigns $\mathcal{K}(H)$ to the boundary hypersurface $F\cap H$ of $F$ (such that $H\in M_1(X)$). $\mathcal{K}$ is a non-negative family index if $\mathcal{K}(H)$ is a non-negative index set for each $H\in M_1(X)$. \\
	Given two family indices $\mathcal{G}$ and $\mathcal{K}$, we define family indices $\mathcal{G}+\mathcal{K}$ and $\mathcal{G}_{\infty}$ as follows:
	\begin{align*}
		&\mathcal{G}_{\infty}(H) = \left(\mathcal{G}(H)\right)_{\infty},\\
		&\left(\mathcal{G}+\mathcal{K}\right)(H) = \mathcal{G}(H)+\mathcal{K}(H) \text{ for each } H\in M_1(X).
	\end{align*}
	We will denote by $\mathcal{A}^{\mathcal{K}}_{phg}(X)$ the space of polyhomogeneous functions on $X$ with index family $\mathcal{K}$.
\end{mydef}

\begin{mydef}
	$\mathcal{A}^{\mathcal{K}}_{phg}(X)$ is the set of functions $f\in C^{\infty}(\mathring{X})$ such that near each boundary hypersurface $H$:
	\begin{equation}
		f\sim\sum_{\left(z,k\right)\in \mathcal{K}(H)} a_{(z,k)}\rho_H^z \left(\log\rho_H\right)^k\:,\: a_{(z,k)}\in \mathcal{A}^{\mathcal{K}_{\vert_H}}_{phg}(H),
	\end{equation}
	$\rho_H$ being the defining function of $H$.
\end{mydef}

Note that in the previous definition, the $a_{(z,k)}$ coefficients are well defined since the induction will ends when reaching a corner of maximal codimension which consists of a closed manifold.
\begin{mydef}
	Let $\left(X,\pi\right)$ be a QAC-manifold, and $E$ a vector bundle over $X$. The set of polyhomogeneous sections of $E$ with family index $\mathcal{K}$ is defined by:
	\begin{equation}
		\mathcal{A}^{\mathcal{K}}_{phg}(X, E) = \mathcal{A}^{\mathcal{K}}_{phg}(X)\bigotimes_{C^{\infty}(X)}\Gamma\left(E\right).
	\end{equation}
\end{mydef}

\begin{mydef}
	We define a \textbf{polyhomogeneous QFB-metric } as an euclidean metric $g\in \mathcal{A}^{\mathcal{K}}_{phg}(X,\operatorname{Sym}^2\left( {}^{\pi}T^*X\right) )$ such that $\mathcal{K}$ is a non-negative family index. We define \textbf{polyhomogeneous QAC-metric } in the same manner.\\
	As a direct consequence of this definition, we have that $\vert\vert \xi\vert\vert_g$ is uniformly bounded on $\mathring{X}$ for any $QFB-$vector field $\xi$.
\end{mydef}

\begin{prop}
	\label{inverse_of_polyhomogeneous_metric}
	The inverse of a \textbf{polyhomogeneous QFB-metric } $g$ is also a \textbf{polyhomogeneous QFB-metric}. \\
	Thus, $g$ induces a \textbf{polyhomogeneous euclidean metric }  on the vector bundles $E={}^{\pi}TX^{\otimes^r}\otimes{}^{\pi}T^*X^{\otimes^s}$.\\
	\begin{proof}
		Use remark \ref{inverse_of_polyhomogeneous_is_polyhomogeneous}
	\end{proof}
\end{prop}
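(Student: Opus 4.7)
The plan is to reduce the statement to the one-variable fact recalled in Remark \ref{inverse_of_polyhomogeneous_is_polyhomogeneous} by trivializing $\operatorname{Sym}^2({}^{\pi}T^*X)$ in a local QFB-coframe and inverting the coefficient matrix via Cramer's rule.

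First, I would fix a point $p\in X$, pick a small neighborhood $U$ of $p$, and choose a local QFB-coframe $(\theta_1,\dots,\theta_n)$ of ${}^{\pi}T^*X$ with dual QFB-frame $(e_1,\dots,e_n)$ of ${}^{\pi}TX$ on $U$. Writing $g=\sum_{i,j} g_{ij}\,\theta_i\otimes\theta_j$, the polyhomogeneity hypothesis on $g$ says exactly that each coefficient $g_{ij}$ lies in $\mathcal{A}^{\mathcal{K}}_{phg}(U)$ for the non-negative index family $\mathcal{K}$. By Cramer's rule, the entries of the inverse matrix are
\begin{equation*}
    g^{ij} \;=\; \frac{(-1)^{i+j}\,M_{ij}}{\det(g_{kl})},
\end{equation*}
where each cofactor $M_{ij}$ and the determinant $\det(g_{kl})$ is a polynomial in the $g_{kl}$, hence polyhomogeneous on $U$ with index family contained in $\mathcal{K}_\infty$. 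The critical observation is that since $\mathcal{K}$ is non-negative, each $g_{ij}$ has a well-defined smooth leading term at every boundary hypersurface, and by continuity from $\mathring{X}$ the resulting leading matrix is itself positive definite; consequently the leading term of $\det(g_{kl})$ is strictly positive, so (shrinking $U$ if needed) $\det(g_{kl})$ is a positive polyhomogeneous function bounded away from zero on $U$. Remark \ref{inverse_of_polyhomogeneous_is_polyhomogeneous} then gives polyhomogeneity of $1/\det(g_{kl})$, and each $g^{ij}$ inherits polyhomogeneity, so $g^{-1}$ is a polyhomogeneous section of $\operatorname{Sym}^2({}^{\pi}TX)$ with a non-negative index family.

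For the second assertion, the components of the induced bundle metric on $E={}^{\pi}TX^{\otimes r}\otimes{}^{\pi}T^*X^{\otimes s}$ are, in the induced frame, the finite sums of products of components of $g$ and $g^{-1}$ already displayed earlier in the paper in the formula for $\langle T,S\rangle$. Since the space of polyhomogeneous sections is closed under sums and products of coefficients, the induced metric is a polyhomogeneous euclidean metric on $E$.

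The main obstacle I anticipate is the justification that $\det(g_{kl})$ has a strictly positive leading term at every boundary face, which is what licenses the application of Remark \ref{inverse_of_polyhomogeneous_is_polyhomogeneous}. This reduces to the fact that the QFB-tangent bundle extends smoothly to $X$, so that the leading matrix in the asymptotic expansion of $(g_{ij})$ represents the restriction of $g$ to ${}^{\pi}T^*X$ over the boundary; non-negativity of the index family together with the Riemannian-metric property of $g$ on $\mathring{X}$ then forces this leading matrix to be positive definite, and hence its determinant to be strictly positive.
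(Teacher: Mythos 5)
Your proposal is correct and is essentially an expanded version of the paper's one-line proof, which simply invokes Remark \ref{inverse_of_polyhomogeneous_is_polyhomogeneous}: you make explicit the Cramer's-rule reduction to the polyhomogeneity and non-vanishing of the determinant, and the closure of polyhomogeneous coefficients under sums and products for the induced metric on $E$. The only point to treat with care --- which the paper also leaves implicit --- is that continuity from $\mathring{X}$ alone gives only positive \emph{semi}-definiteness of the leading matrix at a boundary face, so the fact that $\det(g_{kl})$ is bounded away from zero up to the boundary is really part of what one must build into the notion of a QFB-metric rather than a consequence of it.
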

\begin{eg}
	\label{b_vectors_as_polyhomogeneous}
	Using equations $(\ref{local-desc_b_vector})$ and $(\ref{local-desc_qfb_vector})$ we can see that $\mathcal{V}_b\subset \mathcal{A}^{\mathcal{G}}_{phg}(X, {}^{\pi}TX)$ such that $\mathcal{G}$ is an index family satisfying $\mathcal{G}(H) \subset \mathbb{Z}_{\left\{\ge -1\right\}}\times\left\{0\right\}$ for every $H\in M_1(X)$.
\end{eg}

\subsubsection{Some results on \textbf{polyhomogeneous QAC-metrics}}

Some of the work done in \cite{Ammann_2004} can actually be extended to \textbf{polyhomogeneous QAC-metric}. For instance, let $\left(X,\mathcal{V}_{QAC}, g_{QAC}\right)$ be a \textbf{QAC-manifold} such that $g_{QAC}$ is polyhomogeneous with respect to a non-negative family index $\mathcal{G}$. We will denote by $\operatorname{Diff}_{\mathcal{V}_{QAC}}(X)$ the algebra of differential operators generated by vectors in $\mathcal{V}_{QAC}$ and with coefficients in $C^{\infty}(X)$.

\begin{mydef}
	\label{def_differential_operators}
	Given two vector bundles $E_1$ and $E_2$ over $X$, we will denote by 
	\begin{equation}
		\operatorname{Diff}_{\mathcal{V}_{QAC}}(X, E_1, E_2) = \operatorname{Diff}_{\mathcal{V}_{QAC}}(X)\bigotimes_{{C^{\infty}(X)}}\Gamma\left(E_1^*\otimes E_2\right)
	\end{equation}
	the algebra of differential operators taking sections of $E_1$ to sections of $E_2$ generated by vectors in $\mathcal{V}_{QAC}$ and with coefficients in $C^{\infty}(X)$.  To clarify this definition, let $\mathcal{U}$ be a local trivializing neighborhood of both $E_1$ and $E_2$. Then, $\Gamma(E_{i_{\vert_{\mathcal{U}}}}) \simeq C^{\infty}(\mathcal{U})\bigotimes\mathbb{C}^{N_i}$ for $i=1,2$. So locally, the elements of  $\operatorname{Diff}_{\mathcal{V}_{QAC}}(X, E_1, E_2)$ are a linear combination of the composition of operators of the form $X\otimes A$, such that $X$ in $\mathcal{V}_{QAC}(X)$, and $A$ a smooth family of linear mappings in $\mathcal{L}\left(\mathbb{C}^{N_1},\mathbb{C}^{N_2}\right)$. Note that this definition doesn't depend on the local trivialization since two local trivializations only differ by a smooth family of linear mappings $\in\mathcal{L}\left(\mathbb{C}^{N_1},\mathbb{C}^{N_2}\right)$.\\ 
	 In the same manner, we define
	\begin{equation}
		\operatorname{Diff}_{\mathcal{V}_{QAC},\mathcal{G}}(X, E_1, E_2) = \mathcal{A}^{\mathcal{G}}_{phg}(X)\bigotimes_{{C^{\infty}(X)}}\operatorname{Diff}_{\mathcal{V}_{QAC}}(X, E_1, E_2), 
	\end{equation}
	the algebra of differential operators taking sections of $E_1$ to sections of $E_2$ generated by vectors in $\mathcal{V}_{QAC}$ and with polyhomogeneous coefficients in $\mathcal{A}^{\mathcal{G}}_{phg}(X)$. When $E_1=E_2$ we will use the simpler notations $\operatorname{Diff}_{\mathcal{V}_{QAC}}(X, E)$ and  $\operatorname{Diff}_{\mathcal{V}_{QAC},\mathcal{G}}(X, E)$.
\end{mydef}

\begin{prop}
	\label{connection_polyhomogeneous_covariant}
	Let $\nabla$ be the Levi-Civita connection of $g_{QAC}$. Then, $\nabla$ can be extended to a differential operator in $x_{max}\operatorname{Diff}_{\mathcal{V}_{QAC},\mathcal{G}}(X, {}^{\pi}TX,{}^{\pi}TX^*\otimes {}^{\pi}TX)$. Consequently, the Riemannian curvature tensor $R$ is an element of $x^2_{max}\mathcal{A}^{\mathcal{G}}_{phg}(X, \Lambda^2\left({}^{\pi}T^*X\right)\bigotimes End({}^{\pi}TX))$.\\
	\begin{proof}
	Let $X, Y$ and $Z$ be three $QAC$-vector fields. Then, using the Koszul  identity, we have that
	\begin{equation}
		\label{Koszul_identity}
		2\left<\nabla_X Y, Z\right> = X\left(\left<Y, Z\right> \right) + Y\left(\left<X, Z\right> \right) - Z\left(\left<X, Y\right> \right) + \left<\left[X, Y\right], Z\right> - \left<\left[Y, Z\right], X\right> - \left<\left[X, Z\right], Y\right>.
	\end{equation}
	Then, using proposition \ref{lie_brackets_of_QAC_vectors} we deduce that each term in the right side of equation $(\ref{Koszul_identity})$ is in $x_{max}\mathcal{A}^{\mathcal{G}}_{phg}(X)$. Thus, $\left<\nabla_X Y, Z\right>\in x_{max}\mathcal{A}^{\mathcal{G}}_{phg}(X)$, which implies that $\nabla_X Y\in x_{max}\mathcal{A}^{\mathcal{G}}_{phg}(X, {}^{\pi}TX)$. Since $R(X, Y) = \left[\nabla_X,\nabla_Y\right]-\nabla_{\left[X, Y\right]}$, the remaining statements are direct consequences of the previous one.
	\end{proof}
\end{prop}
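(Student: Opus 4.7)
The plan is to compute $\nabla_X Y$ directly via the Koszul formula applied to a triple of $QAC$-vector fields $X, Y, Z \in \mathcal{V}_{QAC}(X)$. The right-hand side decomposes into two types of terms: derivatives of inner products such as $X\langle Y, Z\rangle$, and inner products involving Lie brackets such as $\langle [X,Y], Z\rangle$. My goal is to show that each such term lies in $x_{max}\mathcal{A}^{\mathcal{G}}_{phg}(X)$, so that $\langle \nabla_X Y, Z\rangle$ does too; afterwards I would use Proposition \ref{inverse_of_polyhomogeneous_metric} to raise an index via the polyhomogeneous inverse metric and conclude $\nabla_X Y \in x_{max}\mathcal{A}^{\mathcal{G}}_{phg}(X, {}^{\pi}TX)$.

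For the inner-product terms, $\langle Y, Z\rangle \in \mathcal{A}^{\mathcal{G}}_{phg}(X)$ because $g_{QAC}$ is a polyhomogeneous section of $\operatorname{Sym}^2({}^{\pi}T^*X)$ and $Y, Z$ are smooth sections of ${}^{\pi}TX$. Writing $X = x_{max}V$ for a $Qb$-vector field $V$ and using that $\mathcal{V}_{Qb}\subset \mathcal{V}_b$ acts on polyhomogeneous functions without enlarging the index family (which follows from the local form of $\mathcal{V}_b$ in boundary coordinates), I obtain $X\langle Y, Z\rangle \in x_{max}\mathcal{A}^{\mathcal{G}}_{phg}(X)$. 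For the bracket terms, Proposition \ref{lie_brackets_of_QAC_vectors} gives $[X,Y] \in x_{max}\mathcal{V}_{QAC}(X)$, so $\langle [X,Y], Z\rangle$ inherits the same factor of $x_{max}$. Summing the contributions and expressing $\nabla$ in a local $\mathcal{V}_{QAC}$-frame, the first-order part is a $\mathcal{V}_{QAC}$-derivative and the Christoffel-type coefficients live in $\mathcal{A}^{\mathcal{G}}_{phg}(X)$ after absorbing the overall $x_{max}$, which places $\nabla$ in $x_{max}\operatorname{Diff}_{\mathcal{V}_{QAC},\mathcal{G}}(X, {}^{\pi}TX, {}^{\pi}T^*X\otimes {}^{\pi}TX)$.

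For the curvature, I would substitute into $R(X, Y)Z = \nabla_X \nabla_Y Z - \nabla_Y \nabla_X Z - \nabla_{[X, Y]} Z$. Each iterated covariant derivative contributes two factors of $x_{max}$, either by nested application of the $x_{max}$ coming from the first part or, after Leibniz expansion, through a derivative $X(x_{max})$ which is itself in $x_{max} C^{\infty}(X)$. The bracket term similarly gains one power of $x_{max}$ from Proposition \ref{lie_brackets_of_QAC_vectors} and another from the connection estimate, so every contribution lies in $x^2_{max}\mathcal{A}^{\mathcal{G}}_{phg}$; the tensorial character in $X, Y, Z$ then upgrades this to $R \in x^2_{max}\mathcal{A}^{\mathcal{G}}_{phg}(X, \Lambda^2({}^{\pi}T^*X)\otimes \operatorname{End}({}^{\pi}TX))$. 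The main potential obstacle I anticipate is justifying that $Qb$-vector fields act on polyhomogeneous functions while preserving the index family $\mathcal{G}$ --- without this extension of the second assertion of Proposition \ref{lie_brackets_of_QAC_vectors} to polyhomogeneous data, the pointwise estimates fail to upgrade to polyhomogeneous membership, so this is the technical heart of the argument.
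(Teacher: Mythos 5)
Your proposal is correct and follows essentially the same route as the paper: the Koszul formula applied to a triple of $QAC$-vector fields, Proposition \ref{lie_brackets_of_QAC_vectors} for the bracket and derivative terms, duality via the polyhomogeneous inverse metric to recover $\nabla_X Y$, and the standard formula for $R$ in terms of iterated covariant derivatives. The one point where you are more explicit than the paper --- that $b$-vector fields act on polyhomogeneous functions without enlarging the index family, needed because $\left<Y,Z\right>$ is only polyhomogeneous rather than smooth up to the boundary --- is precisely the step the paper leaves implicit, and your justification via the local form of $\mathcal{V}_b$ in boundary coordinates is the right one.
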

As a consequence of proposition \ref{connection_polyhomogeneous_covariant} , we have the following corollary.
\begin{cor}
	\label{covariant_polyhomogeneous_tensor}
	Let $k\in\mathbb{N}$. Then $\nabla^kR\in x^{2+k}_{max}\mathcal{A}^{\mathcal{G}_{\infty}}_{phg}(X, {}^{\pi}T^*X^{\otimes^k}\otimes \Lambda^2\left({}^{\pi}T^*X\right)\bigotimes End({}^{\pi}TX))$. More generally, if $T\in\mathcal{A}^{\mathcal{K}}_{phg}\left(X, {}^{\pi}TX^{\otimes^r}\otimes{}^{\pi}T^*X^{\otimes^s}\right)$, then
	 $\nabla^kT\in x_{max}^k\mathcal{A}^{\mathcal{K}+\mathcal{G}_{\infty}}_{phg}\left(X, {}^{\pi}T^*X^{\otimes^k}\otimes {}^{\pi}TX^{\otimes^r}\otimes{}^{\pi}T^*X^{\otimes^s}\right)$.
\end{cor}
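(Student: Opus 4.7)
My plan is to prove both assertions by induction on $k$, with the Riemann-curvature statement arising as the $\mathcal{K}=\mathcal{G}$ specialization of the general tensor statement (shifted by the extra $x_{max}^2$ factor already identified for $R$ in Proposition \ref{connection_polyhomogeneous_covariant}). The base case $k=0$ is immediate: the tensor statement is the hypothesis on $T$ together with $\mathcal{K}\subset\mathcal{K}+\mathcal{G}_\infty$, and the curvature statement is precisely the second half of Proposition \ref{connection_polyhomogeneous_covariant}.

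The inductive step requires the following one-derivative claim:
\begin{equation*}
\nabla\colon x_{max}^{m}\,\mathcal{A}^{\mathcal{H}}_{phg}(X,E) \longrightarrow x_{max}^{m+1}\,\mathcal{A}^{\mathcal{H}+\mathcal{G}}_{phg}\bigl(X,\,{}^{\pi}T^{*}X\otimes E\bigr)
\end{equation*}
for any tensor bundle $E$ built from copies of ${}^{\pi}TX$ and ${}^{\pi}T^{*}X$, any non-negative index family $\mathcal{H}$, and any $m\ge 0$. I would establish this locally. Writing a section of $x_{max}^{m}\mathcal{A}^{\mathcal{H}}_{phg}(X,E)$ in a local frame for $E$ induced by (\ref{local-desc_qfb_vector}) and (\ref{qfb_forms}), the Leibniz rule splits $\nabla$ into two pieces: a derivative acting on the polyhomogeneous coefficients, and Christoffel-type terms rearranging the frame. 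Proposition \ref{connection_polyhomogeneous_covariant} governs the Christoffel part directly, producing coefficients in $x_{max}\mathcal{A}^{\mathcal{G}}_{phg}$ whose product with the section's polyhomogeneous coefficients lies in $x_{max}^{m+1}\mathcal{A}^{\mathcal{H}+\mathcal{G}}_{phg}$. For the coefficient-derivative part, one checks that a QAC-vector field maps $x_{max}^{m}\mathcal{A}^{\mathcal{H}}_{phg}$ into $x_{max}^{m+1}\mathcal{A}^{\mathcal{H}}_{phg}$: since a QAC-vector field equals $x_{max}$ times a Qb-vector field (Remark \ref{remark_QAC_QB_relation}), and Qb-vector fields are $b$-vector fields acting in an index-family-preserving way on polyhomogeneous expansions (a direct local calculation on the generators (\ref{local-desc_qfb_vector}), where radial generators act as $\rho_H\partial_{\rho_H}$ on monomials $\rho_H^{z}(\log\rho_H)^{j}$ in an index-preserving way), the required $x_{max}$-scaling and index-family control follow.

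Iterating this one-derivative claim and invoking the identity $\mathcal{G}_\infty+\mathcal{G}=\mathcal{G}_\infty$ (the hypersurface-wise extension of the scalar identity $H_\infty+H=H_\infty$ noted after the definition of $H_\infty$), the inductive hypothesis $\nabla^{k}T\in x_{max}^{k}\mathcal{A}^{\mathcal{K}+\mathcal{G}_\infty}_{phg}$ propagates to $\nabla^{k+1}T\in x_{max}^{k+1}\mathcal{A}^{\mathcal{K}+\mathcal{G}_\infty+\mathcal{G}}_{phg}=x_{max}^{k+1}\mathcal{A}^{\mathcal{K}+\mathcal{G}_\infty}_{phg}$, closing the induction. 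The Riemann-curvature case is identical with $\mathcal{K}=\mathcal{G}$ and a base weight of $x_{max}^{2}$, giving $\nabla^{k}R\in x_{max}^{2+k}\mathcal{A}^{\mathcal{G}+\mathcal{G}_\infty}_{phg}=x_{max}^{2+k}\mathcal{A}^{\mathcal{G}_\infty}_{phg}$. The main obstacle is the verification that Qb-vector fields act in an index-family-preserving way on polyhomogeneous expansions; this reduces to a direct local computation on the generators of $\mathcal{V}_{Qb}$ given by (\ref{local-desc_qfb_vector}), but is the single substantive step beyond Leibniz-rule bookkeeping.
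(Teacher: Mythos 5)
Your argument is correct and is exactly the elaboration the paper intends when it states this corollary without proof, as ``a consequence of Proposition \ref{connection_polyhomogeneous_covariant}'': the one-derivative claim (connection coefficients in $x_{max}\mathcal{A}^{\mathcal{G}}_{phg}$ from Proposition \ref{connection_polyhomogeneous_covariant} for the Christoffel part, QAC-vector-field differentiation gaining one power of $x_{max}$ via $\mathcal{V}_{QAC}=x_{max}\mathcal{V}_{Qb}$ for the coefficient part), iterated and closed with the absorption identity $\mathcal{G}_\infty+\mathcal{G}=\mathcal{G}_\infty$. The only quibble is a citation slip: the generators that act as $\rho_H\partial_{\rho_H}$ on polyhomogeneous monomials are the $b$-vector fields of $(\ref{local-desc_b_vector})$ rather than the QFB generators of $(\ref{local-desc_qfb_vector})$, but since Qb-vector fields are by definition $b$-vector fields this does not affect your conclusion.
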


\begin{prop}
	\label{bounded_polyhomogeneous_tensor}
	Let $T\in \mathcal{A}^{\mathcal{K}}_{phg}\left(X, {}^{\pi}TX^{\otimes^r}\otimes{}^{\pi}T^*X^{\otimes^s}\right)$ be a polyhomogeneous tensor with respect to some index family $\mathcal{K}$ such that $\mathcal{K}(H)\subset \mathbb{Z}\times\mathbb{N}_0$ for any $H\in M_1(X)$. Then, $\vert\vert T\vert\vert_{g_{QAC}}<\infty$ if and only if $\mathcal{K}$ can be chosen to be a non-negative index family.\\
	\begin{proof}
		Let us choose a local frame near the boundary of $X$ that diagonalizes the metric $g_{QAC}$. Then, we have that:
		\begin{equation*}
			\vert\vert T\vert\vert^2_{g_{QAC}} = g_{QAC,i_1 i_1}\dots g_{QAC,i_r i_r}g_{QAC}^{j_1j_1}\dots g_{QAC}^{j_s j_s} {T^{i_1\dots i_r}_{j_1\dots j_s}}^2.
		\end{equation*}
		The coefficients $g_{QAC,i_1 i_1}\dots g_{QAC,i_r i_r}g_{QAC}^{j_1j_1}\dots g_{QAC}^{j_s j_s}$ are positive and bounded on $\mathring{X}$. Thus, $\vert\vert T\vert\vert_{g_{QAC}}<\infty$ implies that the coefficients $T^{i_1\dots i_r}_{j_1\dots j_s}$ are also uniformly bounded on $\mathring{X}$. Consequently, $\mathcal{K}$ can be chosen to be a non-negative index family. To prove the opposite direction we only use that polyhomogeneous functions with respect to a non-negative index family are bounded. 
	\end{proof}
\end{prop}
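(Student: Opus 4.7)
The plan is to reduce the question to a scalar statement about bounded polyhomogeneous functions by orthonormalising the metric in a local frame, and then to read off from the asymptotic expansion which index pairs can possibly appear in a bounded function.

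First, I would fix a boundary hypersurface $H \in M_1(X)$ with defining function $\rho_H$ and work in a neighbourhood of $H$. Starting from any local $QAC$-frame $(f_i)$ of ${}^{\pi}TX$, Proposition \ref{inverse_of_polyhomogeneous_metric} gives a polyhomogeneous inverse metric, so Gram--Schmidt produces a $g_{QAC}$-orthonormal frame $(e_i)$ whose transition matrix is polyhomogeneous with a non-negative index family. In the dual $QAC$-coframe the norm squared becomes
\begin{equation*}
\vert\vert T\vert\vert_{g_{QAC}}^{2} \;=\; \sum_{I,J}\bigl(T^{I}_{J}\bigr)^{2},
\end{equation*}
so $\vert\vert T\vert\vert_{g_{QAC}} < \infty$ near $H$ is equivalent to uniform boundedness of each scalar component $T^{I}_{J}$. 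Each $T^I_J$ is again polyhomogeneous with index family contained in $\mathbb{Z}\times\mathbb{N}_0$ at every hypersurface, since this class is stable under combinations with polyhomogeneous factors of non-negative index.

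Next, I would analyse the expansion at $H$,
\begin{equation*}
T^{I}_{J} \;\sim\; \sum_{(z,k)\in\mathcal{K}(H)} a_{(z,k)}\,\rho_{H}^{z}(\log\rho_{H})^{k}, \qquad a_{(z,k)} \in \mathcal{A}^{\mathcal{K}|_{H}}_{phg}(H).
\end{equation*}
Because $\mathcal{K}(H) \subset \mathbb{Z}\times\mathbb{N}_0$ is an index set, for any bound there are only finitely many pairs with $\operatorname{Re} z$ below it, so the leading non-vanishing term dominates as $\rho_H \to 0$. A pair $(z,k)$ with $z<0$ and $a_{(z,k)} \not\equiv 0$ would force $T^{I}_{J}$ to blow up like $\rho_{H}^{z}(\log\rho_{H})^{k}$, while a pair with $z=0,k\ge 1$ would force unboundedness through the log factor. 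Boundedness therefore forces these "bad" coefficients to vanish identically; removing those zero entries from $\mathcal{K}(H)$ and, if needed, adjoining $\mathbb{N}_0 \times \{0\}$ with zero coefficients produces a non-negative index set at $H$. The converse is easy: each monomial $\rho_{H}^{z}(\log\rho_{H})^{k}$ with $(z,k)$ in a non-negative index set is bounded (either $z>0$ dominates any $\log$ factor, or $z=k=0$), and the orthonormalising transition matrix is bounded by construction.

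The main obstacle is that this cleanup must be carried out consistently across the entire corner structure: the coefficients $a_{(z,k)}$ live on $H$, itself a manifold with fibred corners of strictly smaller depth, and $\mathcal{K}|_{H}$ inherits the hypothesis. I would therefore induct on the depth of $X$, applying the one-hypersurface argument at each step to the coefficients restricted to lower-dimensional boundary faces. The compatibility of defining functions with the iterated fibration structure ensures that truncating the expansion at $H$ does not disturb the expansion at an adjacent hypersurface $H'$; verifying this compatibility rigorously is where the real work lies, as opposed to the one-variable asymptotic analysis which is essentially elementary.
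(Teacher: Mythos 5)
Your proposal is correct and takes essentially the same route as the paper: both reduce the tensor norm to scalar components via an adapted local frame (the paper diagonalizes $g_{QAC}$, you orthonormalize via Gram--Schmidt) and then conclude that boundedness of each polyhomogeneous scalar forces a non-negative index family. You spell out the expansion analysis at each hypersurface and the induction on depth that the paper leaves implicit, but the underlying argument is the same.
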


As a consequence of proposition \ref{bounded_polyhomogeneous_tensor} we have the following important corollary that is going to be essential in the proof of the isomorphism theorem in the next section.
\begin{cor}
	\label{polyho_lunardi_condition}
	Let $V$ be a $b-$vector field such that:
	\begin{equation}
		\label{bounded_curvature_tensor}
		\vert\vert Rm(g_{QAC})\ast V\vert\vert_{C^0(\mathring{X})} + \vert\vert  \nabla V\vert\vert_{C^0(\mathring{X})} <\infty
	\end{equation}
	Then both $\left(Rm(g_{QAC})\ast V\right)$ and $\nabla V$ are polyhomogeneous tensors with respect to non-negative family indices. Consequently, we have that:
	\begin{equation}
		\label{decaying_curvature_tensor}
		\vert\vert x_{max}^{-\left(k+1\right)}\nabla^k \left(Rm(g_{QAC})\ast V\right)\vert\vert_{C^0(\mathring{X})} + \vert\vert  x_{max}^{-k}\nabla^{k+1} V\vert\vert_{C^0(\mathring{X})} <\infty
	\end{equation}
	for any positive integer $k$. In particular, equation $(\ref{bounded_curvature_tensor})$ is satisfied for any $QAC-$vector field $V$.\\
	\begin{proof}
		Let us note that both $Rm(g_{QAC})\ast V$ and $\nabla V$ are polyhomogeneous tensors. Then using proposition \ref{bounded_polyhomogeneous_tensor} , the equation $(\ref{bounded_curvature_tensor})$ implies that both $\left(Rm(g_{QAC})\ast V\right)$ and $\nabla V$ are polyhomogeneous tensors with respect to a non-negative family indices. Equation $(\ref{decaying_curvature_tensor})$ is a combination of propositions \ref{covariant_polyhomogeneous_tensor} and \ref{bounded_polyhomogeneous_tensor}.\\
		The last assertion follows from the fact that $QAC-$vector fields are smooth up to the boundary.
	\end{proof}
\end{cor}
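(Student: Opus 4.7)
The plan is to follow the three-step strategy implicit in the earlier propositions: establish polyhomogeneity of the two tensors in question, use the $C^0$-bound hypothesis to promote their index families to non-negative ones via proposition \ref{bounded_polyhomogeneous_tensor}, and then extract the decay of higher covariant derivatives via corollary \ref{covariant_polyhomogeneous_tensor}.

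First, I would check that $Rm(g_{QAC})\ast V$ and $\nabla V$ are both polyhomogeneous tensors on $X$. By proposition \ref{connection_polyhomogeneous_covariant}, the Riemann tensor of a polyhomogeneous QAC-metric sits in $x_{max}^2\mathcal{A}^{\mathcal{G}}_{phg}\!\left(X,\Lambda^2({}^{\pi}T^*X)\otimes\mathrm{End}({}^{\pi}TX)\right)$, and by example \ref{b_vectors_as_polyhomogeneous}, any $b$-vector field $V$ is polyhomogeneous as a section of ${}^{\pi}TX$, possibly with index $-1$ at some hypersurfaces. Contracting polyhomogeneous tensors stays polyhomogeneous, so $Rm(g_{QAC})\ast V$ is polyhomogeneous. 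For $\nabla V$, the same proposition asserts that $\nabla$ extends as a differential operator in $x_{max}\operatorname{Diff}_{\mathcal{V}_{QAC},\mathcal{G}}(X,\cdot,\cdot)$, whose action preserves polyhomogeneity; so $\nabla V$ is polyhomogeneous.

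Second, I would apply proposition \ref{bounded_polyhomogeneous_tensor} to each of these tensors separately. The hypothesis $\Vert Rm(g_{QAC})\ast V\Vert_{C^0}<\infty$ and $\Vert\nabla V\Vert_{C^0}<\infty$ tells us that the index families of the polyhomogeneous expansions of $Rm(g_{QAC})\ast V$ and $\nabla V$ (with respect to the QAC cotangent frame $(\mathrm{d}z_j,\mathrm{d}v_i/v_i^2,\mathrm{d}y^j_i/v_i)$) may be taken non-negative. This is the central hinge of the argument.

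Finally, for the decay of covariant derivatives, I apply corollary \ref{covariant_polyhomogeneous_tensor}: any polyhomogeneous tensor $T$ with index family $\mathcal{K}$ satisfies $\nabla^k T\in x_{max}^{k}\mathcal{A}^{\mathcal{K}+\mathcal{G}_{\infty}}_{phg}$. Applying this with $T=\nabla V$ (which has non-negative family by step two) gives $\nabla^{k+1}V\in x_{max}^{k}\mathcal{A}^{\text{non-neg}}_{phg}$, so by the easy direction of proposition \ref{bounded_polyhomogeneous_tensor}, $\Vert x_{max}^{-k}\nabla^{k+1}V\Vert_{C^0}<\infty$. Applying it to $T=Rm(g_{QAC})\ast V$ gives $\nabla^k(Rm(g_{QAC})\ast V)\in x_{max}^{k}\mathcal{A}^{\text{non-neg}}_{phg}$; the extra factor of $x_{max}$ that upgrades $x_{max}^{k}$ to $x_{max}^{k+1}$ comes from the $x_{max}^{2}$ prefactor carried by the curvature in proposition \ref{connection_polyhomogeneous_covariant}, balanced against the possible $x_{max}^{-1}$ behavior of $V$ as a $b$-vector field in the QAC frame; the $C^0$-boundedness of $Rm(g_{QAC})\ast V$ ensures the overall remaining index family is non-negative. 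The final claim for QAC-vector fields is immediate since such $V$ extends smoothly to the boundary and $g_{QAC}(V,V)$ is bounded on $\mathring{X}$, so both $\Vert Rm(g_{QAC})\ast V\Vert_{C^0}$ and $\Vert\nabla V\Vert_{C^0}$ are automatically finite.

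The main obstacle is the careful bookkeeping of the index families in the two different Lie structures: $V$ lives naturally in the $b$-bundle where its index may be $-1$ at each hypersurface, while the norms and the statement are phrased with respect to $g_{QAC}$. Tracking exactly how the factor $x_{max}^{2}$ of the curvature and the possibly singular frame expansion of $V$ combine to yield the sharp exponent $k+1$ (rather than $k$) in the curvature estimate is the delicate part of the argument.
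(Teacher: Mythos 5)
Your proposal is correct and follows essentially the same route as the paper: note polyhomogeneity of $Rm(g_{QAC})\ast V$ and $\nabla V$, upgrade their index families to non-negative ones via Proposition \ref{bounded_polyhomogeneous_tensor} using the $C^0$ hypothesis, and then combine Corollary \ref{covariant_polyhomogeneous_tensor} with Proposition \ref{bounded_polyhomogeneous_tensor} to get the decay of higher derivatives, with the last assertion following from smoothness of $QAC$-vector fields up to the boundary. Your explicit accounting for the exponent $k+1$ (the $x_{max}^2$ prefactor of the curvature against the at-worst $x_{max}^{-1}$ behaviour of a $b$-vector field at the maximal faces) is a correct elaboration of a step the paper leaves implicit.
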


\subsubsection{Adjoints of differential operators}

Let $\left(E, \left<,\right>_E\right)$ be a hermitian vector bundle over $\mathring{X}$, and $C^{\infty}_{\mathcal{C}}(\mathring{X}, E)$ the space of compactly supported sections of $E$. We consider the hermitian inner product on $C^{\infty}_{\mathcal{C}}(\mathring{X}, E)$ defined by:
\begin{equation}
	\label{hermitian_inner_product}
	\left<\alpha, \beta\right> :=\int_{\mathring{X}}\left<\alpha,\beta \right>_E\:dV
\end{equation}
such that $\operatorname{dV}$ is the volume element induced by $g_{QAC}$ over $\mathring{X}$.
\begin{lemm}
	\label{adjoint_of_operator}
	Let $D\in\operatorname{Diff}_{{\mathcal{V}_{QAC}},\mathcal{G}}(X, E)$ be a differential operator and $D^{\#}$ its formal adjoint. Then, $D^{\#}\in\operatorname{Diff}_{{\mathcal{V}_{QAC}},\mathcal{K}}(X, E)$ for some non-negative index family $\mathcal{K}$.\\
	\begin{proof}
	As we noted in definition \ref{def_differential_operators}, elements of $\operatorname{Diff}_{{\mathcal{V}_{QAC}},\mathcal{G}}(X, E)$ are locally linear combination of the composition of operators of the form $X\otimes A$ such that $X\in\mathcal{V}_{QAC}(X)$ and $A$ a smooth family of endomorphisms of $\mathbb{C}^N$. We know that the adjoint of an endomorphism is again an endomorphism. It only remains to prove that the adjoint of an operator of the form $X\otimes 1$ is in $\operatorname{Diff}_{{\mathcal{V}_{QAC}},\mathcal{K}}(X, E)$.\\
	Let $\alpha,\beta\in C^{\infty}_{\mathcal{C}}(\mathring{X}, E)$, and $U$ a trivializing neighborhood of $E$ with respect to a unitary frame. Then 
	\begin{align}
		&\alpha_{\vert_U} = f_i\otimes z_i, \\
		&\beta_{\vert_U} = h_j\otimes w_i\:,\: f_i,h_j\in C^{\infty}(U), \text{and }z_i,w_j\in\mathbb{C}^N.
	\end{align}
	For the sake of simplicity we will suppose that $\alpha_{\vert_U} = f\otimes z$ and $\beta_{\vert_U} = h\otimes w$. Then
	\begin{align*}
		&\int_{U}\left<X\otimes 1 \alpha,\beta \right>_E\:\operatorname{dV} = \int_{U}\left<\alpha,X^{\#}\otimes 1\beta \right>_{\mathbb{C}^N}\:\operatorname{dV} \\
		=& \int_{U}X(f)\overline{h} \left<z,w\right>_{\mathbb{C}^N}\:\operatorname{dV} =  \int_{U}f\overline{X^{\#}(h)} \left<z,w\right>_{\mathbb{C}^N}\:\operatorname{dV}
	\end{align*}
	where $\left<,\right>_{\mathbb{C}^N}$ is the standard hermitian product on $\mathbb{C}^N$. So $X^{\#}$ is such that
	\begin{equation*}
		\int_{\mathring{X}}X(f)\overline{h} \:\operatorname{dV} =  \int_{\mathring{X}}f\overline{X^{\#}(h)}\: \operatorname{dV}
	\end{equation*} 
	for any $f,h\in C^{\infty}_{\mathcal{C}}(\mathring{X})$. Since $\int_{\mathring{X}}X(f)\overline{h} \:\operatorname{dV} = \int_{\mathring{X}}X(f\overline{h}) \:dV-\int_{\mathring{X}}f\overline{X(h)} \:\operatorname{dV}$ and that\\ $0=\int_{\mathring{X}}\operatorname{div}(f\overline{h}X) \:\operatorname{dV} =  \int_{\mathring{X}}f\overline{h}\operatorname{div}(X) \:\operatorname{dV}+\int_{\mathring{X}}X(f\overline{h}) \:\operatorname{dV}$, we obtain :
	\begin{equation*}
		\int_{\mathring{X}}X(f)\overline{h} \:\operatorname{dV} =  -\int_{\mathring{X}}f\overline{h}\operatorname{div}(X) \:\operatorname{dV} -\int_{\mathring{X}}f\overline{X(h)} \:\operatorname{dV} = \int_{\mathring{X}}f\overline{-\left(\operatorname{div X}+X(h)\right)} \:\operatorname{dV}
	\end{equation*}
	This implies that $X^{\#} = -\operatorname{div X} - X$. We recall that $\operatorname{div X} = \operatorname{tr}_{g_{QAC}}\nabla X$, which implies that $\operatorname{div X}\in x_{max}\mathcal{A}^{\mathcal{K}}_{phg}(X)$ (by proposition \ref{connection_polyhomogeneous_covariant}) for some non-negative index family $\mathcal{K}$.
\end{proof}
	
	\begin{cor}
		\label{corollary_adjoint_operator}
		Let $E_1$ and $E_2$ be two Hermitian vector bundles over $X$ and $D\in\operatorname{Diff}_{{\mathcal{V}_{QAC}},\mathcal{G}}(X, E_1, E_2)$. Then the formal adjoint of $D$ is in $\operatorname{Diff}_{{\mathcal{V}_{QAC}},\mathcal{K}}(X, E_2, E_1)$ for some non-negative index family.\\
		\begin{proof}
			Let define $E= E_1\bigoplus E_2$ and use the natural matrix (block) notation to describe  $\operatorname{Diff}_{{\mathcal{V}_{QAC}},\mathcal{G}}(X, E_1, E_2)$ as a subset of $\operatorname{Diff}_{{\mathcal{V}_{QAC}},\mathcal{G}}(X, E)$, then apply lemma \ref{adjoint_of_operator}.
		\end{proof}
	\end{cor}
	
	\begin{prop}
		\label{proposition_differential_operator}
		The exterior derivative $d$ is a differential operator in $\operatorname{Diff}_{{\mathcal{V}_{QAC}}}(X, \Lambda^p{}^{\pi}T^*X, \Lambda^{p+1}{}^{\pi}T^*X)$. In particular, $d\in x_{max}\operatorname{Diff}_{{\mathcal{V}_{Qb}}}(X, \Lambda^p{}^{\pi}T^*X, \Lambda^{p+1}{}^{\pi}T^*X)$.\\
		\begin{proof}
			Let $\omega\in\Gamma\left(\Lambda^p{}^{\pi}T^*X\right)$, and $\left\{X_i\right\}_{1\le i\le n}$ a local frame of $QAC-$vector fields. Then, we have :
			\begin{align*}
				\left(d\omega\right)\left(X_{i_0},\dots, X_{i_p}\right) = &\sum_{j=0}^{p} \left(-1\right)^j X_{i_j}\left(\omega\left(X_{i_0},\dots, \widehat{X_{i_j}},\dots,X_{i_p}\right)\right)-\\
				&\sum_{0\le s<t\le p}^{p} \left(-1\right)^{s+t} \omega\left(\left[X_{i_s}, X_{i_t}\right],X_{i_0},\dots,\widehat{X_{i_s}},\dots,\widehat{X_{i_t}},\dots,X_{i_p}\right)
			\end{align*}
			Since $\left[X_i, X_j\right] = c_{ij}^s X_s$ where $c_{ij}^s\in x_{max}C^{\infty}(X)$, we deduce that $d\in \operatorname{Diff}_{{\mathcal{V}_{QAC}}}(X, \Lambda^p{}^{\pi}T^*X, \Lambda^{p+1}{}^{\pi}T^*X)$. Actually we can see that $d\in x_{max}\operatorname{Diff}_{{\mathcal{V}_{Qb}}}(X, \Lambda^p{}^{\pi}T^*X, \Lambda^{p+1}{}^{\pi}T^*X)$
		\end{proof}
	\end{prop}
	
	\begin{cor}
			\label{corol_laplace_polyhomogene}
		The Hodge-Laplace operator on $\left(\mathring{X}, g_{QAC}\right)$ defined by $\Delta_{g_{QAC}}=\left(d+d^*\right)^2$ is a differential operator in  $\operatorname{Diff}_{{\mathcal{V}_{QAC}},\mathcal{K}}(X, \Lambda^p{}^{\pi}T^*X)$ for some non-negative index family $\mathcal{K}$.  In particular, $\Delta_{g_{QAC}}\in x_{max}^2\operatorname{Diff}_{{\mathcal{V}_{Qb}},\mathcal{K}}(X, \Lambda^p{}^{\pi}T^*X)$.\\
		\begin{proof}
			The proof is a combination of corollary \ref{corollary_adjoint_operator} and proposition \ref{proposition_differential_operator}.
		\end{proof}
	\end{cor}
\end{lemm}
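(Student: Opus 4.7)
The plan is to combine the two preceding results mechanically: apply proposition \ref{proposition_differential_operator} to place $d$ in the QAC-algebra, apply corollary \ref{corollary_adjoint_operator} to conclude the same for $d^\ast$ (with polyhomogeneous coefficients), and then observe that compositions and sums stay inside the algebra. Since $\Delta_{g_{QAC}}=(d+d^\ast)^2=dd^\ast+d^\ast d$ on $\Lambda^p\,{}^{\pi}T^\ast X$ (the cross terms map between forms of different degree and play no role when acting on pure degree), it suffices to control $dd^\ast$ and $d^\ast d$ separately.

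First, proposition \ref{proposition_differential_operator} gives $d\in\operatorname{Diff}_{\mathcal{V}_{QAC}}(X,\Lambda^p\,{}^{\pi}T^\ast X,\Lambda^{p+1}\,{}^{\pi}T^\ast X)$, i.e.\ a QAC-differential operator with smooth coefficients. Applying corollary \ref{corollary_adjoint_operator} to $d$ (viewing the target and source bundles as Hermitian via the polyhomogeneous metric on the exterior powers of ${}^{\pi}T^\ast X$ coming from proposition \ref{inverse_of_polyhomogeneous_metric}) yields
\begin{equation*}
d^\ast\in\operatorname{Diff}_{\mathcal{V}_{QAC},\mathcal{K}_1}(X,\Lambda^{p+1}\,{}^{\pi}T^\ast X,\Lambda^{p}\,{}^{\pi}T^\ast X)
\end{equation*}
for some non-negative index family $\mathcal{K}_1$ (the polyhomogeneity of the coefficients comes, as in lemma \ref{adjoint_of_operator}, from the divergence term $\operatorname{div}(\cdot)\in x_{max}\mathcal{A}^{\mathcal{K}}_{phg}(X)$). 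Because $\operatorname{Diff}_{\mathcal{V}_{QAC},\mathcal{K}}$ is stable under composition and under addition (after enlarging $\mathcal{K}$), the compositions $dd^\ast$ and $d^\ast d$ lie in $\operatorname{Diff}_{\mathcal{V}_{QAC},\mathcal{K}}(X,\Lambda^p\,{}^{\pi}T^\ast X)$ for a suitable non-negative index family $\mathcal{K}$, which settles the first assertion.

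For the sharper ``in particular'' statement, proposition \ref{proposition_differential_operator} also records that $d\in x_{max}\operatorname{Diff}_{\mathcal{V}_{Qb}}(X,\ldots)$, and rerunning the adjoint computation of lemma \ref{adjoint_of_operator} while tracking the factor of $x_{max}$ (it commutes out of the integral) gives $d^\ast\in x_{max}\operatorname{Diff}_{\mathcal{V}_{Qb},\mathcal{K}_1}(X,\ldots)$. Writing $d=x_{max}D_1$ and $d^\ast=x_{max}D_2$ with $D_i\in\operatorname{Diff}_{\mathcal{V}_{Qb},\mathcal{K}}$, we compute
\begin{equation*}
dd^\ast=x_{max}D_1(x_{max}D_2)=x_{max}^2\,D_1D_2+x_{max}D_1(x_{max})\,D_2.
\end{equation*}
The point that makes everything collapse into $x_{max}^2\operatorname{Diff}_{\mathcal{V}_{Qb},\mathcal{K}}$ is that every Qb-vector field is a b-vector field, so by \eqref{b_vector_field} it sends $x_{max}$ into $x_{max}C^\infty(X)$; hence $D_1(x_{max})\in x_{max}\mathcal{A}^{\mathcal{K}}_{phg}(X)$ and the remainder term is again of the desired form. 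The same argument applies to $d^\ast d$, and summing gives $\Delta_{g_{QAC}}\in x_{max}^2\operatorname{Diff}_{\mathcal{V}_{Qb},\mathcal{K}}(X,\Lambda^p\,{}^{\pi}T^\ast X)$.

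I do not anticipate any real obstacle: the whole proof is essentially bookkeeping, once one notices the commutator identity $[D,x_{max}]=D(x_{max})\in x_{max}\mathcal{A}^{\mathcal{K}}_{phg}(X)$ for $D$ first-order in Qb-vector fields. The only place where one must be slightly careful is the adjoint step, to ensure the $x_{max}$ prefactor is preserved when passing to $d^\ast$; this is immediate because $x_{max}$ is real and commutes out of the Hermitian pairing \eqref{hermitian_inner_product}, so taking the adjoint of $x_{max}D_1$ gives $D_1^\#\,x_{max}=x_{max}D_1^\#+[D_1^\#,x_{max}]$, and the commutator again contributes an $x_{max}$ by the same b-vector-field argument.
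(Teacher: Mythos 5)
Your proof is correct and follows the same route the paper intends: cite Proposition \ref{proposition_differential_operator} to place $d$, use Corollary \ref{corollary_adjoint_operator} for $d^\ast$, and close under composition. The paper's one-line proof leaves the $x_{max}^2$ bookkeeping implicit; your explicit commutator computation (using that Qb vector fields, being b-vector fields, send $x_{max}$ into $x_{max}C^{\infty}(X)$ so that $[D_i,x_{max}]\in x_{max}\operatorname{Diff}^0_{\mathcal{V}_{Qb}}$) correctly fills in the detail the paper elides.
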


	\section[The isomorphism theorem]{The isomorphism theorem}
\label{section_isomorphism_theorem}

Let $\left(X, g_{QAC}\right)$ be a QAC-manifold such that $g_{QAC}$ is a polyhomogeneous metric. We will denote by $(H_i)_{1\le i\le k}$ the boundary hypersurfaces of $X$, and  $(x_i)_{1\le i\le k}$ their respective defining functions. We define $x_{max}$ as the product of boundary defining functions of maximal hypersurfaces, and $v=\prod_{i=1}^{k}x_i$. \\The Riemannian manifold $\left(M=X\backslash\partial X, g_{QAC}\right)$ is a complete manifold of bounded geometry and positive injectivity radius (proposition 1.3 in \cite{rochon_QAC}).\\ Let $E=\left(T^*M^{\otimes^r}\otimes TM^{\otimes^s}\right)$ be a tensor bundle over $M$, and $\mathcal{A}$ the elliptic operator defined by:
\begin{equation}
	\label{soliton_elleptic_operator}
	\mathcal{A}=\underbrace{\Delta +\nabla_V}_{\Delta_V}-\lambda
\end{equation}
acting on sections of $E$, such that:
\begin{itemize}
	\item $V$ is a b-vector field on $X$.
	\item $\lambda$ is a positive constant.
	\item $\Delta$ and $\nabla$ are the Laplacian and the Levi-Civita connection of $g_{QAC}$ respectively.
\end{itemize}

In this section, we will prove that $\mathcal{A}:D^{k+2,\theta}_{\Delta_V, f}\left(M, E\right)\to C^{k,\theta}_{Qb, f}\left(M, E\right)$ is an isomorphism of Banach spaces for some positive function $f$ to be defined later and such that $D^{k+2,\theta}_{\Delta_V, f}\left(M, E\right)$ and $C^{k,\theta}_{Qb, f}\left(M, E\right)$ are as defined below.

\subsection{Function spaces}
\label{section_functional_spaces}
In the following functional spaces, we will consider the norm with respect to $g_{QAC}$ and the euclidean structure on $E$. Covariant derivatives will be taken with respect to the Levi-Civita connection of $g_{QAC}$ and the connection on $E$. Motivated by the work of \cite{Siepmann} and \cite{smoothing_deruelle}, we define the following weighted holder spaces:

\begin{itemize}
	\item $C_{Qb}^{k,\theta}\left(M, E\right):=\left\{h\in C^{k,\theta}_{loc}\left(M, E\right)\;\vert\; \vert\vert h\vert\vert_{C_{Qb}^{k,\theta}\left(M, E\right)}<\infty\right\}$, where
	\begin{align*}
		\vert\vert h\vert\vert_{C^{k}_{Qb}\left(M, E\right)} &:= \sum_{i=0}^{k}\:\sup\limits_M\vert x_{max}^{-i} \nabla^ih\vert,  \\
		\vert\vert h\vert\vert_{C^{k,\theta}_{Qb}\left(M, E\right)} &:= \vert\vert h\vert\vert_{C^{k}_{Qb}\left(M, E\right)} + \left[x_{max}^{-k} \nabla^kh\right]_{\theta},
	\end{align*}
	and
	\begin{equation}
		\label{seminorm_qb}
		\left[T\right]_{\theta} := \sup\limits_{x\in M}\:\sup\limits_{y\in B\left(x,\delta\right)\backslash\left\{x\right\}}\frac{\vert T(x)-P^*_{x,y}T(y)\vert}{d(x,y)^{\theta}},
	\end{equation}
	$P_{x,y}$ being the parallel transport along the unique minimizing geodesic from $x$ to $y$, and $\delta$ the injectivity radius of $g_{QAC}$.
	\begin{remq}
		The space $C_{Qb}^{k,\theta}\left(M, E\right)$ defined here is different from the one considered in \cite{rochon_QAC}, since in $(\ref{seminorm_qb})$ it is the distance of the QAC-metric which is used instead of the distance of the Qb-metric.
	\end{remq}
	
	Given an elliptic differential operator $\mathcal{P}$ acting on sections of $E$, we also define the following spaces:
	\item  $D^{2+k}_{\mathcal{P}}(M, E):= \left\{h\in \bigcap_{p\ge 1}  W_{loc}^{2+k,p}(M, E)\;\vert\; h\in C^k_{Qb}(M,E)\;;\; \mathcal{P}\left(h\right)\in C^k_{Qb}(M,E)\right\}$, with the norm
	\begin{equation*}
		\vert\vert h\vert\vert_{D^{2+k}_{\mathcal{P}}(M, E)} := \vert\vert h\vert\vert_{C^k_{Qb}(M, E)}+\vert\vert\mathcal{P}\left(h\right)\vert\vert_{C^k_{Qb}(M, E)}.
	\end{equation*}
	\item  $D^{{2+k},\theta}_{\mathcal{P}}(M, E):= \left\{h\in C^{k+2,\theta}_{loc}(M,E)\;\vert\; h\in C^{k,\theta}_{Qb}(M,E)\;;\; \mathcal{P}\left(h\right)\in C^{k,\theta}_{Qb}(M,E)\right\}$, with the norm
	\begin{equation*}
		\vert\vert h\vert\vert_{D^{{2+k},\theta}_{\mathcal{P}}(M, E)} := \vert\vert h\vert\vert_{C^{k,\theta}_{Qb}(M, E)}+\vert\vert\mathcal{P}\left(h\right)\vert\vert_{C^{k,\theta}_{Qb}(M, E)}.
	\end{equation*}
\end{itemize}

The following weighted spaces are defined using a positive function $f$ to be defined later:
\begin{itemize}
	\item $C^{k,\theta}_{Qb, f}\left(M, E\right):= f^{-1}C^{k,\theta}_{Qb}\left(M, E\right)$ with the norm $\vert\vert h\vert\vert_{C^{k,\theta}_{Qb, f}\left(M, E\right)}$:=$\vert\vert fh\vert\vert_{C^{k,\theta}_{Qb}\left(M, E\right)}$.
	\item $D^{k+2,\theta}_{\mathcal{P}, f}\left(M, E\right):= f^{-1}D^{k+2,\theta}_{\mathcal{P}}\left(M, E\right)$ with the norm $\vert\vert h\vert\vert_{D^{k+2,\theta}_{ \mathcal{P}, f}\left(M, E\right)}$:=$\vert\vert fh\vert\vert_{D^{k+2,\theta}_{\mathcal{P}}\left(M, E\right)}$.\\
\end{itemize}

\begin{remq}
	It is worth mentioning that weighted holder spaces were introduced to study the behavior of the Laplacian on non compact manifolds. For instance, \cite{ChaljubSimon1979ProblmesED} introduced the following spaces:
	\begin{equation*}
		C_{\beta,\rho}^{k,\theta}\left(M, E\right):=\left\{h\in C^{k,\theta}_{loc}\left(M, E\right)\;\vert\; \vert\vert h\vert\vert_{C_{\beta,\rho}^{k,\theta}\left(M, E\right)}:=\vert\vert h\vert\vert_{C_{\beta,\rho}^{k}\left(M, E\right)}+\left[\nabla^k h\right]_{\theta,\beta-k-\theta,\rho}<\infty\right\},
	\end{equation*}
	such that:
	\begin{align*}
	&\vert\vert h\vert\vert_{C_{\beta,\rho}^{k}\left(M, E\right)}= \sum_{i=0}^{k}\vert\vert\rho^{i-\beta} \nabla^{j}h\vert\vert_{C^0\left(M, E\right)},\\
	&\left[T\right]_{\theta,\gamma,\rho} := \sup\limits_{x\in M}\:\sup\limits_{y\in B\left(x,\delta\right)\backslash\left\{x\right\}}\inf(\rho(x),\rho(y))^{-\gamma}\frac{\vert T(x)-P^*_{x,y}T(y)\vert}{d(x,y)^{\theta}},
	\end{align*}
	to study linear elliptic operators on asymptotically euclidean manifolds ($\rho$ being some distance function). These spaces were adapted by Joyce in \cite{joyceALE} and \cite{joyceQALE} to study asymptotically locally euclidean manifolds and quasi-asymptotically euclidean manifolds. Note that $C_{0,x_{max}^{-1}}^{k,\theta}\left(M, E\right)\subset C_{Qb}^{k,\theta}\left(M, E\right)$.
\end{remq}

	\begin{remq}
	It is also important to note that the space $C_{Qb}^{k,\theta}\left(M, E\right)$ as defined here is not equal to the interpolation space $\left(C_{Qb}^{k}\left(M, E\right), C_{Qb}^{k+1}\left(M, E\right)\right)_{\theta,\infty}$ which can be identified as follows:
	\begin{equation*}
		\left(C_{Qb}^{k}\left(M, E\right), C_{Qb}^{k+1}\left(M, E\right)\right)_{\theta,\infty} = \left\{h\in C^k_{Qb}(M, E)\:\vert\: \left[x_{max}^{-k}\nabla^kh\right]_{Qb,\theta}<+\infty\right\}
	\end{equation*}
	such that:
	\begin{equation*}
		\left[T\right]_{Qb,\theta} = \sup\limits_{x\in M}\sup\limits_{y\in B(x, \frac{\delta}{x_{max}})\backslash\left\{x\right\}}\min\left\{x_{max}^{-\theta}(x),x_{max}^{-\theta}(y)\right\}\frac{\vert T(x)-P^*_{x,y}T(y)\vert}{d(x,y)^{\theta}}
	\end{equation*}
	where $\delta$ is a positive constant depending on the lower bound of $\inf\limits_{x\in M}inj(x,g_{QAC})x_{max}(x)$.
\end{remq}

\begin{prop}
	\label{proposition_D_banach_space}
	Let $\mathcal{P}$ be an elliptic differential operator and $\lambda$ a constant such that \\
	$\mathcal{P}-\lambda: D^{k+2,\theta}_{\mathcal{P}}(M, E)\to C^{k,\theta}_{Qb}(M, E)$ is an isomorphism. Then, the space $D^{{2+k},\theta}_{\mathcal{P}}(M, E)$ is a Banach space. \\
	\begin{proof}
		We are going to use the fact that $C_{Qb}^{k,\theta}\left(M, E\right)$ is a Banach space. Let $\left(h_k\right)_{k\in\mathbb{N}}$ be a Cauchy sequence in $D^{{2+k},\theta}_{\mathcal{P}}(M, E)$. Then, there exists $h\in C_{Qb}^{k,\theta}\left(M, E\right)$ such that $h_k$ converges to $h$ in $C_{Qb}^{k,\theta}\left(M, E\right)$.\\
		Since $((\mathcal{P}-\lambda)(h_k))$ is also a Cauchy sequence in $C_{Qb}^{k,\theta}\left(M, E\right)$, there exists $\tilde{h}\in C_{Qb}^{k,\theta}\left(M, E\right)$ such that $(\mathcal{P}-\lambda)(h_k)$ converges to $(\mathcal{P}-\lambda)(\tilde{h})$ . Note also that $(\mathcal{P}-\lambda)(h_k)$ converges to $(\mathcal{P}-\lambda)(h)$ in the $C^{k,\theta}_{loc}(M,E)$ topology which implies that $(\mathcal{P}-\lambda)(\tilde{h}) = (\mathcal{P}-\lambda)(h)$. Consequently, $h\in D^{{2+k},\theta}_{\mathcal{P}}(M, E)$.
	\end{proof}
\end{prop}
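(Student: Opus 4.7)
The plan is to verify completeness directly by taking an arbitrary Cauchy sequence $(h_n)$ in $D^{2+k,\theta}_{\mathcal{P}}(M,E)$ and producing a limit that lies in the space. The defining norm splits as
\begin{equation*}
	\|h\|_{D^{2+k,\theta}_{\mathcal{P}}} = \|h\|_{C^{k,\theta}_{Qb}} + \|(\mathcal{P}-\lambda)h\|_{C^{k,\theta}_{Qb}},
\end{equation*}
so from the single Cauchy property I extract two Cauchy sequences, $(h_n)$ and $((\mathcal{P}-\lambda)h_n)$, both living in the Banach space $C^{k,\theta}_{Qb}(M,E)$. Their respective limits $h$ and $g$ therefore exist in that space.

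The main step is to identify $g$ with $(\mathcal{P}-\lambda)h$. For this I would pass to a weaker topology: $C^{k,\theta}_{Qb}$-convergence implies $C^{k,\theta}_{loc}$-convergence on every relatively compact subset of $M$, and the differential operator $\mathcal{P}-\lambda$ (whose coefficients are smooth in the interior) commutes with limits taken in the distributional/$C^{k,\theta}_{loc}$ sense. Thus $(\mathcal{P}-\lambda)h_n \to (\mathcal{P}-\lambda)h$ distributionally, while the same sequence converges in $C^{k,\theta}_{Qb}$ to $g$; uniqueness of distributional limits forces $(\mathcal{P}-\lambda)h = g \in C^{k,\theta}_{Qb}$. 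At this point I invoke the isomorphism hypothesis, which provides a unique $\tilde h \in D^{2+k,\theta}_{\mathcal{P}}$ with $(\mathcal{P}-\lambda)\tilde h = g$; since both $h$ and $\tilde h$ are sent to $g$ by $\mathcal{P}-\lambda$ (and the latter is injective on $D^{2+k,\theta}_{\mathcal{P}}$), we conclude $h = \tilde h$, so $h$ already belongs to $D^{2+k,\theta}_{\mathcal{P}}$. Reassembling the two pieces of convergence yields $h_n \to h$ in the $D$-norm.

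The one delicate point I expect is the limit passage $(\mathcal{P}-\lambda)h_n \to (\mathcal{P}-\lambda)h$: because the coefficients of $\mathcal{P}$ (in particular the $b$-vector field $V$) are unbounded with respect to $g_{QAC}$, the operator $\mathcal{P}-\lambda$ is not continuous on $C^{k,\theta}_{Qb}$, and one cannot interchange the limit with $\mathcal{P}-\lambda$ directly in the global norm. Restricting to precompact domains where the coefficients are uniformly bounded resolves this, and the isomorphism hypothesis then does the remaining work of placing $h$ back inside $D^{2+k,\theta}_{\mathcal{P}}$ without a separate appeal to elliptic regularity.
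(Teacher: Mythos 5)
Your proof follows the same route as the paper's: split the Cauchy sequence into $(h_n)$ and $\left((\mathcal{P}-\lambda)h_n\right)$, obtain limits $h$ and $g$ in $C^{k,\theta}_{Qb}(M,E)$, and identify $g=(\mathcal{P}-\lambda)h$ by passing to a weaker local or distributional topology on precompact sets. The concluding step is, however, circular as written. You invoke injectivity of $\mathcal{P}-\lambda$ on $D^{2+k,\theta}_{\mathcal{P}}(M,E)$ to deduce $h=\tilde h$, but that injectivity only applies once $h$ is known to belong to $D^{2+k,\theta}_{\mathcal{P}}(M,E)$, which is precisely what remains to be shown. The isomorphism hypothesis gives existence and uniqueness \emph{within the domain}; it says nothing about uniqueness in the larger class $C^{k,\theta}_{Qb}(M,E)$ where $h$ a priori sits, so it cannot by itself place $h$ back inside the domain.

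To close the loop you do in fact need elliptic regularity, contrary to your final remark. Knowing $(\mathcal{P}-\lambda)h=g$ distributionally with both $h$ and $g$ in $C^{k,\theta}_{loc}(M,E)$, interior Schauder estimates give $h\in C^{k+2,\theta}_{loc}(M,E)$. Together with $h\in C^{k,\theta}_{Qb}(M,E)$ and $\mathcal{P}h = g+\lambda h \in C^{k,\theta}_{Qb}(M,E)$, that is exactly the definition of membership in $D^{2+k,\theta}_{\mathcal{P}}(M,E)$, after which convergence $h_n\to h$ in the $D$-norm is immediate; the detour through $\tilde h$ then becomes dispensable. The paper's own proof is compressed on this same point and leaves the regularity step implicit, so the gap is minor, but the explicit claim that the isomorphism hypothesis lets you skip elliptic regularity should be dropped: the regularity step is where the domain membership actually comes from.
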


\subsection{Lunardi's theorem on QAC manifolds}

In order to prove the isomorphism theorem, we are going to use the following theorem:

\begin{theo}[Lunardi]
	\label{theoreme_lunardi_2}
	Let $\left(M^n, g\right)$ be a complete Riemannian manifold with positive injectivity radius, and $V$ be a smooth vector field on $M$. Let $\mathcal{P}$ be an elliptic differential operator acting on tensors over $M$ such that:
	\begin{equation}
		\mathcal{P} = \underbrace{\Delta +\nabla_V}_{\Delta_V}+ r(x)\:,\:r\in C^3(M).
	\end{equation}
	Suppose that $sup_{x\in M} r(x)=r_0<\infty$ and that there exists a positive constant $C$ such that $\sum_{i=1}^{3}\vert\vert\nabla^i r\vert\vert<C$. Assume also that there exists a positive constant $K$ such:\\
	\begin{equation}
		\label{lunardi_requirement}
		\vert\vert Rm(g)\vert\vert_{C^3(M,E)}+\vert\vert Rm(g)\ast V\vert\vert_{C^3(M,E)}+\vert\vert \nabla V\vert\vert_{C^{2}(M,E)}\le K,
	\end{equation}
	where $Rm(g)\ast V=Rm(g)(V,.,.,.)$. Assume also that there exists a function $\phi\in C^2(M)$ and a constant $\lambda_0\ge r_0$ such that:
	\begin{equation}
		\label{lunardi_condition_phi_2}
		\lim_{x\to\infty}\phi (x) = +\infty, \: sup_{x\in M}\left(\mathcal{P}(\phi)(x)-\lambda_0\phi\left(x\right)\right)<\infty.
	\end{equation}
	Then:
	\begin{enumerate}
		\item For any  $\lambda>r_0$, there exists a positive constant $C$ such that for any $H\in C^0\left(M, E\right)$, there exists a unique tensor $h\in D^2_{\mathcal{P}}\left(M, E\right)$, satisfying:
		\begin{equation*}
			\mathcal{P}(h) -\lambda h= H,\: \vert\vert h\vert\vert_{D^2_{\mathcal{P}}\left(M,E\right)}\le C \vert\vert H\vert\vert_{C^0\left(M, E\right)}.
		\end{equation*}
		Moreover $D^2_{\mathcal{P}}\left(M, E\right)$is continuously embedded in $C^{\theta}\left(M,E\right)$ for any $\theta\in\left(0,2\right)$, i.e there exists a positive constant $C(\theta)$ such that for any $h\in D^2_{\mathcal{P}}\left(M, E\right)$, 
		\begin{equation*}
			\vert\vert h\vert\vert_{C^{\theta}(M, E)}\le C(\theta)\vert\vert h\vert\vert^{\frac{\theta}{2}}_{D^2_{\mathcal{P}}\left(M, E\right)} \vert\vert h\vert\vert^{1-\frac{\theta}{2}}_{C^0(M, E)}.
		\end{equation*}
		\item For any $\lambda>r_0$, there exists a positive constant $C$ such that for any $H\in C^{0,\theta}\left(M, E\right)$, $\theta\in \left(0,1\right)$ , there exists a unique tensor $h\in C^{2,\theta}\left(M, E\right)$ satisfying:
		\begin{equation*}
			\mathcal{P}(h) - \lambda h= H,\: \vert\vert h\vert\vert_{C^{2,\theta}\left(M, E\right)}\le C \vert\vert H\vert\vert_{C^{0,\theta}\left(M, E\right)}.
		\end{equation*}
	\end{enumerate}
	\end{theo}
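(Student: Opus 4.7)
The plan is to follow Lunardi's original strategy as adapted to the tensor setting in \cite{smoothing_deruelle}, using the Lyapunov function $\phi$ as a substitute for compactness and the bounded geometry hypothesis $(\ref{lunardi_requirement})$ to upgrade local elliptic estimates to global ones.

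First, I would establish the a priori $C^0$ bound and uniqueness via a global weak maximum principle for $|h|^2$. Starting from $(\mathcal{P}-\lambda)h = H$ and using the Bochner--Weitzenb\"ock formula together with Kato's inequality, one derives a scalar inequality of the form $(\Delta + \nabla_V)|h|^2 \ge 2\langle H, h\rangle + 2(\lambda - r - C_K)|h|^2$, where $C_K$ depends only on the curvature and $\nabla V$ bounds of $(\ref{lunardi_requirement})$. Applying it to the test function $|h|^2 - \epsilon\phi$, which attains its supremum at an interior point $p_\epsilon$ because $|h|$ is bounded and $\phi\to+\infty$, then using at $p_\epsilon$ that $(\Delta+\nabla_V)|h|^2 \le \epsilon (\Delta+\nabla_V)\phi$ together with the hypothesis that $\mathcal{P}\phi - \lambda_0\phi$ is bounded above, and finally letting $\epsilon \to 0$, one concludes $\|h\|_{C^0} \le C(\lambda-r_0)^{-1}\|H\|_{C^0}$. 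Uniqueness for $(\mathcal{P}-\lambda)$ follows by specializing to $H = 0$.

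For existence, I would exhaust $M$ by relatively compact domains $\Omega_n := \{\phi < c_n\}$ with smooth boundary (perturbing the $c_n$'s by Sard if necessary). On each $\Omega_n$ the Dirichlet problem $(\mathcal{P}-\lambda)h_n = H$, $h_n\vert_{\partial\Omega_n} = 0$, is uniquely solvable by Fredholm theory (Theorem \ref{theorem_elliptic_operator}), the kernel being trivial by the local maximum principle since $\lambda > r_0$. The same Bochner argument, applied on $\Omega_n$, gives $\|h_n\|_{C^0(\Omega_n)} \le C\|H\|_{C^0}$ uniformly in $n$. Hypothesis $(\ref{lunardi_requirement})$ together with the positive injectivity radius makes the interior Schauder estimates of Theorem \ref{schauder_estimate} uniform at a fixed scale, so the sequence $(h_n)$ is uniformly bounded in $C^{2,\theta}_{loc}(M,E)$; a diagonal Arzel\`a--Ascoli extraction produces a limit $h \in D^2_{\mathcal{P}}(M,E)$ satisfying $(\mathcal{P}-\lambda)h = H$ with $\|h\|_{D^2_{\mathcal{P}}} \le C\|H\|_{C^0}$.

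The continuous embedding $D^2_{\mathcal{P}}(M,E) \hookrightarrow C^\theta(M,E)$ and part (2) are then both obtained from local arguments globalized by bounded geometry. On each normal ball of a fixed radius, elliptic $L^p$ theory combined with Sobolev embedding controls $\|h\|_{C^{1,\alpha}}$ in terms of $\|\mathcal{P}h\|_{C^0}$ and $\|h\|_{C^0}$, and the Euclidean interpolation inequality $\|h\|_{C^\theta(B)} \le C\|h\|_{C^0(B)}^{1-\theta/2}\|h\|_{C^2(B)}^{\theta/2}$ then gives the asserted multiplicative bound with constants uniform in the center of the ball. For part (2), running the same exhaustion with data $H \in C^{0,\theta}$ and invoking Theorem \ref{schauder_estimate} at the $C^{2,\theta}$ level upgrades the uniform $C^{2,\theta}_{loc}$ bounds to a genuine global $C^{2,\theta}$ estimate; uniqueness is inherited from part (1). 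The main obstacle is the tensorial maximum-principle step: producing a clean enough scalar differential inequality for $|h|^2$ to drive the Lyapunov argument is precisely what forces the curvature, $Rm\ast V$, and $\nabla V$ bounds in $(\ref{lunardi_requirement})$, and the absorption of the $\epsilon\phi$ correction is what makes the hypothesis $\sup_M(\mathcal{P}\phi - \lambda_0\phi)<\infty$ indispensable.
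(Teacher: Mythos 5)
Your approach diverges fundamentally from the paper's: you propose a direct elliptic exhaustion (Dirichlet problems on $\{\phi < c_n\}$, $C^0$ bound by maximum principle, Arzel\`a--Ascoli for the limit) plus local elliptic regularity, whereas the paper follows Lunardi's original parabolic strategy: solve the Cauchy problem $\partial_t u = \mathcal{A}u$ via an approximating sequence of bounded-coefficient operators, derive the smoothing estimates $\|T(t)\|_{\mathcal{L}(C^0, C^k)} \le C e^{\omega t} t^{-k/2}$ by applying the maximum principle to $s(t,\cdot) = \|u\|^2 + \sum_i \frac{(\alpha t)^i}{i}\|\nabla^i u\|^2$, and then obtain $h = \int_0^\infty e^{-\lambda t} T(t) H\, dt$, extracting the interpolation and $C^{2,\theta}$ estimates from the time-splitting $h = \int_0^\epsilon + \int_\epsilon^\infty$ and the abstract reiteration theorem.

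The gap in your proposal is the claim that hypothesis $(\ref{lunardi_requirement})$ makes the interior Schauder estimates of Theorem $\ref{schauder_estimate}$ ``uniform at a fixed scale''. Theorem $\ref{schauder_estimate}$ gives a constant depending on the coefficients of the operator, hence on $\|V\|_{C^{0,\theta}}$ on the ball in question; but $(\ref{lunardi_requirement})$ bounds $\nabla V$ and $Rm(g)\ast V$, \emph{not} $V$ itself, and $V$ is precisely the unbounded ingredient that makes this theorem delicate. Consequently, the constants in your local $C^{2,\theta}$ and $C^{1,\alpha}$ estimates are not uniform over $M$, and you cannot conclude the global embedding $D^2_{\mathcal{P}}(M,E) \hookrightarrow C^\theta(M,E)$ or the global $C^{2,\theta}$ estimate of part (2) from them. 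This is not a cosmetic issue: it is exactly what the parabolic approach is designed to circumvent, because in the evolution of $s(t,\cdot)$ the term $\nabla_V$ acts as a pure transport and drops out, surviving only through the commutators $[\nabla^i, \nabla_V]$, which by Lemma $\ref{technical_lemma_about_commuting_convariant_derivative}$ involve only $\nabla V$ and $Rm(g)\ast V$ --- the very quantities $(\ref{lunardi_requirement})$ controls. Your exhaustion argument does validly produce a solution $h$ with $\|h\|_{C^0} \le C\|H\|_{C^0}$ (the $C^{2,\theta}_{loc}$ bounds need only be locally uniform for Arzel\`a--Ascoli, and you correctly deduce $\|h\|_{D^2_{\mathcal{P}}} \le C\|H\|_{C^0}$), but the refined estimates that are the substance of the theorem are out of reach by local elliptic theory alone.

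A secondary point: in your uniqueness argument, invoking Bochner--Weitzenb\"ock introduces a curvature correction $C_K$ into the inequality for $|h|^2$, which would force $\lambda > r_0 + C_K$, spoiling the sharp threshold $\lambda > r_0$. Since $\Delta$ here is the rough (connection) Laplacian, the identity $\Delta\|h\|^2 = 2\langle \Delta h, h\rangle + 2\|\nabla h\|^2$ holds with \emph{no} curvature term; the paper exploits this, working with $h_\epsilon = \sqrt{\|h\|^2 + \epsilon^2}$ and the sub-level exhaustion $h_\epsilon - \phi/k$, and you should do the same. You should also note that Theorem $\ref{theorem_elliptic_operator}$ as cited concerns closed manifolds; solvability of the Dirichlet problem on bounded domains with boundary rests on a different (though standard) Fredholm framework.
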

We will prove theorem \ref{theoreme_lunardi_2} in the next section.
\begin{remq}
	\label{remark_lunardi_condition}
	Note that in order to satisfy condition $(\ref{lunardi_requirement})$ for the $b-$vector field $V$, it is sufficient to have
	\begin{equation}
		\label{lunardi_condition_0}
		\vert\vert Rm(g)\ast V\vert\vert_{C^0(M,E)}+\vert\vert \nabla V\vert\vert_{C^{0}(M,E)}<\infty.
	\end{equation}
	This is a direct consequence of corollaries \ref{covariant_polyhomogeneous_tensor} and \ref{polyho_lunardi_condition}.  Note also that condition $(\ref{lunardi_condition_0})$ is satisfied if the vector field $V$ is a $QAC-$vector field.
\end{remq}

Let's start by showing that condition $(\ref{lunardi_condition_phi_2})$ is easily satisfied.

\begin{prop}
	\label{exaustion_function_qac_manifold}
	There exists a smooth function $\phi:M\to\mathbb{R}_+$ such that:
	\begin{equation*}
		\lim_{p\to\infty}\phi (p) = +\infty, \: \sup\limits_{p\in M}\left(\Delta_X\right)\left(\phi\right)<\infty,
	\end{equation*}
	for any b-vector field $X$.\\
	\begin{proof}
		Let us set $\phi(p)=-\ln(v(p))$. Using the definition of a b-vector field and the fact that the Laplacian of $g_{QAC}$ can be expressed as a polynomial of degree at most 2 (without terms of order $0$) in QAC-vector fields \cite{Ammann_2004} we see that both $\Delta \phi$ and $\nabla_X \phi$ are bounded on $M$.
	\end{proof}
\end{prop}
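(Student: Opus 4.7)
The plan is to follow the hint given in the statement and take $\phi(p) = -\ln v(p)$, where $v = \prod_{i=1}^{k} x_i$ is a total boundary defining function. Since going to infinity in $M = \mathring{X}$ means approaching $\partial X$, and $v$ is a product of boundary defining functions, $v(p) \to 0^+$ as $p \to \infty$, giving $\phi(p) \to +\infty$. This disposes of the first condition immediately.

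The main content is the bound on $\Delta_X \phi = \Delta \phi + X \phi$. For the drift term, if $X$ is any $b$-vector field then by the defining property $X(x_i) \in x_i C^\infty(X)$, so writing $v = \prod x_i$ and using Leibniz gives $X(v) = v \cdot \sum_{i} \frac{X(x_i)}{x_i} \in v\, C^\infty(X)$. Consequently $X(-\ln v) = -X(v)/v \in C^\infty(X)$ is uniformly bounded on $M$. For the Laplacian term, I would invoke corollary \ref{corol_laplace_polyhomogene}, which exhibits $\Delta_{g_{QAC}}$ as an element of $\operatorname{Diff}_{\mathcal{V}_{QAC}, \mathcal{K}}(X)$, i.e.\ a second-order polynomial in $QAC$-vector fields with polyhomogeneous coefficients and no zeroth-order term (this is precisely the structure mentioned in the abstract). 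So it suffices to show that $Y(-\ln v)$ and $Y_1 Y_2(-\ln v)$ are bounded for any $QAC$-vector fields $Y, Y_1, Y_2$.

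For a $QAC$-vector field $Y$, Definition \ref{qfb_vector_field} gives $Y(v) \in v^2 C^\infty(X)$, hence $Y(-\ln v) = -Y(v)/v \in v\, C^\infty(X)$, which is not only bounded but decays at the boundary. Writing $Y_2(v) = v^2 a_2$ and $Y_1(v) = v^2 a_1$ for smooth $a_1, a_2$, a direct computation yields
\begin{equation*}
Y_1 Y_2(-\ln v) \;=\; -v^2 a_1 a_2 \;-\; v\, Y_1(a_2),
\end{equation*}
and $Y_1(a_2)$ is smooth up to the boundary (since $QAC$-vector fields are in particular $b$-vector fields, which preserve $C^\infty(X)$). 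Multiplying by the polyhomogeneous, bounded coefficients of $\Delta$ preserves boundedness, so $\Delta \phi$ is bounded on $M$, completing the proof.

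The only subtlety, and the step to carry out carefully, is the reliance on the polynomial description of $\Delta$ in $QAC$-vector fields without zeroth-order term: it is crucial that no term contributes $v^0 \cdot (-\ln v)$, which would blow up. Everything else is book-keeping using the defining properties of $b$- and $QAC$-vector fields acting on the boundary defining functions.
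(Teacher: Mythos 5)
Your proposal is correct and follows essentially the same line of argument as the paper: take $\phi=-\ln v$, use $X(x_i)\in x_iC^\infty(X)$ to bound $X\phi$, and use the fact that $\Delta_{g_{QAC}}$ is a degree-$\le 2$ polynomial in $QAC$-vector fields with bounded coefficients and no zeroth-order term to bound $\Delta\phi$. You merely carry out the straightforward computations $(Y(-\ln v)=-va,\ Y_1Y_2(-\ln v)=-v^2a_1a_2-vY_1(a_2))$ that the paper leaves implicit.
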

Let $\mathcal{A}_{f}$ be the differential operator defined by $\mathcal{A}_f(h)=f\mathcal{A}(f^{-1}h)$. Then, since:
\begin{align*}
	f\Delta(f^{-1}h) &= f\left(f^{-1}\Delta h + h\Delta f^{-1} + 2<\nabla f^{-1}, \nabla h>\right)\\
	&=\Delta h + h f\Delta f^{-1}-2<\nabla \ln(f), \nabla h>\\
	&=\Delta h + h\left(\vert\vert\nabla\ln(f)\vert\vert^2-\Delta \ln(f)\right)-2<\nabla \ln(f), \nabla h>,\\
	&\text{and}\\
	f\nabla_V(f^{-1}h) &= f\left(f^{-1}\nabla_V h + h\nabla_V f^{-1}\right)f = \nabla_V h - h\nabla_V\ln(f),
\end{align*}
we have that:
\begin{equation}
	\label{conjugated_operator_A}
	\mathcal{A}_f(h)=\left(\underbrace{\Delta +\nabla_{V-2\nabla\ln(f)}-V\ln(f)}_{\mathcal{P}_f}-\lambda\right)(h)+\underbrace{\left(\vert\vert\nabla\ln(f)\vert\vert^2-\Delta\ln(f)\right)}_{\mathcal{K}_f}h.
\end{equation}

\begin{remq}
	\label{remark_original_operator_weighted_operator}
	The operator $\mathcal{A}:D^{k+2,\theta}_{\Delta_V, f}\left(M, E\right)\to C^{k,\theta}_{Qb, f}\left(M, E\right)$ is an isomorphism of Banach spaces if and only if $\mathcal{A}_f:D^{k+2,\theta}_{\Delta_V}\left(M, E\right)\to C^{k,\theta}_{Qb}\left(M, E\right)$ is. In what follows, we are going to set $f = v^{-\alpha}$ for some positive real value $\alpha$. \\We are also going to use the following notation:
	\begin{itemize}
		\item  $r_{\alpha}:= \alpha V\ln(v)$;
		\item  $V_{\alpha}:= V+2\alpha\nabla\ln(v)$;
		\item $\mathcal{P}_{\alpha} :=\Delta_{V_{\alpha}}+r_{\alpha}$;
		\item $\mathcal{K}_{\alpha}:=\alpha^2\vert\vert\nabla\ln(v)\vert\vert^2-\alpha\Delta\ln(v)$;
		\item $C^{k,\theta}_{Qb,\alpha}(M, E) := C^{k,\theta}_{Qb, f}(M, E)$;
		\item $D^{k+2,\theta}_{\Delta_V,\alpha}(M, E) := D^{k+2,\theta}_{\Delta_V,f}(M, E)$.
	\end{itemize}
\end{remq}

\begin{prop}
	\label{condition_lunardi_remark}
	Given a function $f$ as defined in the previous remark, we have that:
	\begin{itemize}
		\item[$(i)$] $r_{\alpha}$ is bounded on $M$.
		\item[$(ii)$] $\nabla \ln(v)$ is a QAC-vector field. Thus, both $\vert\vert Rm(g)*\nabla\ln(v)\vert\vert_{C^k(M, E)}$ and $\vert\vert \nabla \nabla\ln(v)\vert\vert_{C^{k-1}(M, E)}$ are bounded for any integer $k\ge 1$.
		\item[$(iii)$] $\lim\limits_{x\to\infty}\vert\vert \nabla^i\mathcal{K}_{\alpha}\vert\vert = 0$ for $i\ge 0$.
	\end{itemize}
	\begin{proof}
		$(i):$ This follows from the proof of proposition \ref{exaustion_function_qac_manifold}.\\
		$(ii):$ $d\ln(f) = -\alpha v\frac{dv}{v^2}$ which is clearly a QAC-covector (see equation $(\ref{qfb_forms})$) that tends to $0$ near the boundary. The rest follows from the fact that both $Rm(g_{QAC})$ and $\nabla\ln(f)$ are tensors over the QAC-vector bundle over $X$, thus bounded with respect to the QAC-metric together with its derivatives.\\
		$(iii):$ Since the Laplacian is a polynomial on $QAC-$vector fields without a constant term, we have that $\Delta\ln(v)\in vC^{\infty}(X)$. Taking covariant derivatives will increase the decay towards maximal hypersurfaces.
	\end{proof}
\end{prop}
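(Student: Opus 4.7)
My plan is to address each of the three claims using the polyhomogeneity and Lie-algebra structure developed in Section \ref{section_QAC}, with $v = \prod x_i$ and $f = v^{-\alpha}$.

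For (i), the argument is essentially the one already used in the proof of Proposition \ref{exaustion_function_qac_manifold}. Since $V$ is a $b$-vector field, the equivalent characterisation (\ref{b_vector_field}) gives $V(x_i) \in x_i\, C^\infty(X)$ for each $i$, so $V\ln v = \sum V(x_i)/x_i \in C^\infty(X)$. A smooth function on the compact manifold with corners $X$ is bounded, so $r_\alpha = \alpha V\ln v$ is bounded on $M = \mathring X$.

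For (ii), the key observation is the factorisation $d\ln v = v\cdot (dv/v^2)$. The covector $dv/v^2$ is one of the local generators of the QFB-cotangent bundle listed in (\ref{qfb_forms}), so it is a smooth section of ${}^{\pi}T^*X$, and the prefactor $v$ shows that $d\ln v$ is a smooth (polyhomogeneous) section of ${}^{\pi}T^*X$ vanishing to first order at every boundary hypersurface. Raising the index with the polyhomogeneous inverse metric (Proposition \ref{inverse_of_polyhomogeneous_metric}) then gives that $\nabla \ln v$ is a smooth section of ${}^{\pi}TX$, i.e.\ a QAC-vector field. From here, the bounds on $\|Rm(g)\ast \nabla\ln v\|_{C^k}$ and $\|\nabla\nabla\ln v\|_{C^{k-1}}$ follow by combining Proposition \ref{connection_polyhomogeneous_covariant} (which places $Rm(g_{QAC})$ in $x_{\max}^2 \mathcal{A}^{\mathcal{G}}_{phg}$), Corollary \ref{covariant_polyhomogeneous_tensor} applied to the polyhomogeneous QAC-vector field $\nabla\ln v$, and Proposition \ref{bounded_polyhomogeneous_tensor} to convert non-negative polyhomogeneity into $g_{QAC}$-boundedness for each covariant derivative.

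For (iii), the plan is to show $\mathcal{K}_\alpha \in v\,C^\infty(X)$, so that it vanishes at infinity, and then extract decay of $\nabla^i\mathcal{K}_\alpha$ from polyhomogeneity. For the first term, $\|\nabla\ln v\|^2_{g_{QAC}} = v^2\, g_{QAC}^{-1}(dv/v^2,\, dv/v^2) \in v^2 C^\infty(X)$, since $dv/v^2$ is a QAC-covector and the inverse metric is polyhomogeneous. For the second, Corollary \ref{corol_laplace_polyhomogene} writes $\Delta_{g_{QAC}}$ as a degree-two polynomial in QAC-vector fields with no zeroth-order term; using that any QAC-vector field $X = x_{\max}W$ with $W$ Qb satisfies $Xv \in v^2 C^\infty(X)$, one gets $X\ln v \in v\,C^\infty(X)$, and the Leibniz rule shows a second QAC-derivative preserves the class $v\,C^\infty(X)$, hence $\Delta\ln v \in v\,C^\infty(X)$. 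Combining both contributions yields $\mathcal{K}_\alpha \in v\,C^\infty(X)$. For $i\geq 1$, the polyhomogeneity of $\mathcal{K}_\alpha$ with positive order at every boundary hypersurface, together with Corollary \ref{covariant_polyhomogeneous_tensor} and Proposition \ref{bounded_polyhomogeneous_tensor}, gives that $\|\nabla^i \mathcal{K}_\alpha\|_{g_{QAC}}$ is itself a bounded polyhomogeneous function with positive order at every boundary hypersurface, hence tends to $0$ at infinity.

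The most delicate step is the identification in (ii) of $\nabla\ln v$ as a QAC-vector field via the decomposition $d\ln v = v\cdot(dv/v^2)$; once this is in place, the rest of the proof is a routine application of the polyhomogeneous calculus of Section \ref{section_QAC}, with the main bookkeeping task being to keep track of whether a vector field is $b$, $Qb$, or QAC at each step.
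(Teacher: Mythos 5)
Your proof is correct and follows essentially the same approach as the paper's: the $b$-vector-field characterization $V(x_i)\in x_i C^\infty(X)$ for (i), the factorization $d\ln v = v\cdot(dv/v^2)$ exhibiting a QAC-covector vanishing at the boundary for (ii), and the fact that $\Delta_{g_{QAC}}$ is a degree-two polynomial in QAC-vector fields without zeroth-order term (with $Xv\in v^2 C^\infty(X)$) for (iii). The one small imprecision --- since $g_{QAC}$ is only polyhomogeneous, $\nabla\ln v$ is a polyhomogeneous rather than genuinely smooth section of ${}^{\pi}TX$ --- is also present in the paper's own phrasing, and your reliance on Corollary \ref{covariant_polyhomogeneous_tensor} and Proposition \ref{bounded_polyhomogeneous_tensor} covers the polyhomogeneous case correctly.
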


\begin{prop}
	For any $h\in C^{k,\theta}_{Qb}(M, E)$ we have that $\mathcal{K}_{\alpha} h\in C^{k,\theta}_{Qb}(M, E)$, we also have that
	\begin{equation}
		\label{equivalence_of_space}
		\Delta_V(h)\in C^{k,\theta}_{Qb}(M, E) \Longleftrightarrow \mathcal{P}_{\alpha} (h)\in C^{k,\theta}_{Qb}(M, E) 
	\end{equation}
	As a consequence, we have that $D^{k+2,\theta}_{\mathcal{P}_{\alpha}+\mathcal{K}_{\alpha} }(M, E) = D^{k+2,\theta}_{\mathcal{P}_{\alpha}}(M, E) = D^{k+2,\theta}_{\Delta_V}(M, E)$.\\
	\begin{proof}
		$\mathcal{K}_{\alpha} h\in C^{k,\theta}_{Qb}(M, E)$ follows from $(iii)$ of proposition  \ref{condition_lunardi_remark} and proposition \ref{prop_product_func_section_holderestimate}. Since we have that
		\begin{align*}
			&\vert \vert\vert \mathcal{P}_{\alpha}(h)+ \mathcal{K}_{\alpha}h\vert\vert_{C^{k,\theta}_{Qb}(M, E)}\le \vert \vert\vert \mathcal{P}_{\alpha}(h)\vert\vert_{C^{k,\theta}_{Qb}(M, E)}+ \vert\vert\mathcal{K}_{\alpha}h\vert\vert_{C^{k,\theta}_{Qb}(M, E)}, \\
			& \text{and} \\
			&\vert \:\: \vert\vert \mathcal{P}_{\alpha}(h)\vert\vert_{C^{k,\theta}_{Qb}(M, E)}- \vert\vert\mathcal{K}_{\alpha}h\vert\vert_{C^{k,\theta}_{Qb}(M, E)} \vert \le \vert \vert\vert \mathcal{P}_{\alpha}(h)+ \mathcal{K}_{\alpha}h\vert\vert_{C^{k,\theta}_{Qb}(M, E)},
		\end{align*}
		which implies that $\left(\mathcal{P}_{\alpha}+\mathcal{K}_{\alpha}\right)(h)\in C^{k,\theta}_{Qb}(M, E) \Longleftrightarrow \mathcal{P}_{\alpha} (h)\in C^{k,\theta}_{Qb}(M, E)$.  We proceed in the same manner using $(i)$ and $(iii)$ of proposition \ref{condition_lunardi_remark} to prove $(\ref{equivalence_of_space})$.
	\end{proof}
\end{prop}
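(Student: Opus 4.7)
The plan is to isolate the multiplier $\mathcal{K}_\alpha$ as an element of $C^{k,\theta}_{Qb}(M)$, reduce the assertion on $\mathcal{K}_\alpha h$ to a Leibniz-type product estimate, and then deduce the operator equivalences from the triangle inequality in the Qb-norm. Concretely, I would first show that $\mathcal{K}_\alpha = \alpha^2\vert\vert\nabla\ln v\vert\vert^2 - \alpha\Delta\ln v$ lies in $C^{k,\theta}_{Qb}(M)$ (and in fact decays at each maximal hypersurface). This combines item $(iii)$ of Proposition \ref{condition_lunardi_remark}, which gives $\vert\vert\nabla^i \mathcal{K}_\alpha\vert\vert \to 0$ at infinity for every $i \geq 0$, with Corollary \ref{covariant_polyhomogeneous_tensor}, which says that covariant differentiation against the Levi-Civita connection of a polyhomogeneous QAC-metric introduces an extra factor of $x_{max}$. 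The two together yield uniform boundedness of $x_{max}^{-i}\nabla^i \mathcal{K}_\alpha$ for $0 \le i \le k$ and a Hölder bound at the top order, so that $\mathcal{K}_\alpha \in C^{k,\theta}_{Qb}(M)$.

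With $\mathcal{K}_\alpha$ in hand I would apply a Qb-adapted version of the Hölder product estimate of Proposition \ref{prop_product_func_section_holderestimate} to the Leibniz expansion
\begin{equation*}
x_{max}^{-i}\nabla^i(\mathcal{K}_\alpha h) = \sum_{j=0}^{i}\binom{i}{j}\bigl(x_{max}^{-j}\nabla^j \mathcal{K}_\alpha\bigr)\bigl(x_{max}^{-(i-j)}\nabla^{i-j} h\bigr),
\end{equation*}
which splits into a sum of products of bounded (respectively Hölder-bounded) Qb pieces and therefore gives $\vert\vert\mathcal{K}_\alpha h\vert\vert_{C^{k,\theta}_{Qb}} \le C\,\vert\vert\mathcal{K}_\alpha\vert\vert_{C^{k,\theta}_{Qb}}\vert\vert h\vert\vert_{C^{k,\theta}_{Qb}}$. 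This establishes the first assertion. The equivalence of $\mathcal{P}_\alpha(h)$ and $(\mathcal{P}_\alpha + \mathcal{K}_\alpha)(h)$ in $C^{k,\theta}_{Qb}$ is then an immediate triangle-inequality argument. To bridge with $\Delta_V$, I would use $\mathcal{P}_\alpha - \Delta_V = \nabla_{2\alpha\nabla\ln v} + r_\alpha$; parts $(i)$ and $(ii)$ of Proposition \ref{condition_lunardi_remark} give $r_\alpha \in C^{k,\theta}_{Qb}(M)$ and show that $\nabla\ln v$ is a QAC-vector field with decaying norm. The same Leibniz/product argument, together with the identity $\mathcal{V}_{QAC} = x_{max}\mathcal{V}_{Qb}$, then shows that $\mathcal{P}_\alpha - \Delta_V$ maps $C^{k,\theta}_{Qb}$ into itself, and another triangle inequality delivers the equivalence and the identification of the three $D^{k+2,\theta}$ spaces.

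The main obstacle is precisely the first-order perturbation $\nabla_{2\alpha\nabla\ln v}$: as a genuine first-order operator it a priori costs a derivative, and the Leibniz expansion of $\nabla^k(\nabla_{2\alpha\nabla\ln v} h)$ contains $\nabla^{k+1} h$, which is not directly controlled in $C^{k,\theta}_{Qb}$. One must exploit the fact that QAC-vector fields are exactly $x_{max}$ times Qb-vector fields, so that the inner contraction $g_{QAC}(\nabla\ln v,\nabla h)$ absorbs the $x_{max}^{-1}$ renormalisation needed for the top-order covariant derivative; alternatively, one can use the additional $C^{k+2,\theta}_{loc}$ regularity built into the definition of $D^{k+2,\theta}_{\mathcal{P}_\alpha}$ to handle the extra derivative. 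It is this scaling compensation that ultimately legitimises the identification $D^{k+2,\theta}_{\Delta_V} = D^{k+2,\theta}_{\mathcal{P}_\alpha}$.
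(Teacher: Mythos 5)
Your treatment of the zeroth-order pieces coincides with the paper's: for $\mathcal{K}_\alpha h$ you feed the decay of $\nabla^i\mathcal{K}_\alpha$ (item $(iii)$ of Proposition \ref{condition_lunardi_remark}, upgraded to weighted bounds via Corollary \ref{covariant_polyhomogeneous_tensor}) into the product estimate of Proposition \ref{prop_product_func_section_holderestimate}, and the passage between $\mathcal{P}_\alpha$ and $\mathcal{P}_\alpha+\mathcal{K}_\alpha$ is the same triangle-inequality argument the paper uses. Where you genuinely go beyond the paper is in isolating the first-order perturbation $\nabla_{2\alpha\nabla\ln v}$ in $\mathcal{P}_\alpha-\Delta_V$: the paper's proof of $(\ref{equivalence_of_space})$ consists of the single sentence ``we proceed in the same manner using $(i)$ and $(iii)$'' and never mentions this term, whereas you correctly identify it as the crux.

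However, neither of the two mechanisms you offer for that term closes the gap as stated. At top order the Leibniz expansion of $x_{max}^{-k}\nabla^k\left(\nabla_{\nabla\ln v}h\right)$ contains $x_{max}^{-k}\,\nabla\ln v\ast\nabla^{k+1}h$, and membership of $h$ in $C^{k,\theta}_{Qb}(M,E)$ gives no bound whatsoever on $\nabla^{k+1}h$. The identity $\mathcal{V}_{QAC}=x_{max}\mathcal{V}_{Qb}$ together with $\vert\nabla\ln v\vert_{g_{QAC}}=O(v)$ improves the \emph{weight} by a power of $x_{max}$ but cannot manufacture the missing derivative, and the $C^{k+2,\theta}_{loc}$ regularity built into $D^{k+2,\theta}_{\mathcal{P}_\alpha}$ gives only local, unweighted control of $\nabla^{k+1}h$. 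What actually makes the statement usable is that it is only ever applied to sections in the domain spaces, where the a priori estimates (Lunardi's theorem for $k=0$, and the bound $(\ref{iso_second_estimate_of_h})$, i.e.\ $\vert\vert x_{max}^{-(k-1)}\nabla^{k+1}h\vert\vert_{C^{0,\theta}(M,E)}<\infty$, produced in the induction of Theorem \ref{isomorphism_theorem}) supply the global weighted control of the extra derivative; combined with the decay of $\nabla\ln v$ this renders the offending term admissible. If you want the proposition for arbitrary $h\in C^{k,\theta}_{Qb}(M,E)$ with $\Delta_V h\in C^{k,\theta}_{Qb}(M,E)$, you must either assume or derive such an estimate; as written, this step of your argument remains a sketch rather than a proof.
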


\begin{remq}
	Going back to remark \ref{remark_original_operator_weighted_operator}, in order to prove that $\mathcal{A}_f:D^{k+2,\theta}_{\Delta_V}\left(M, E\right)\to C^{k,\theta}_{Qb}\left(M, E\right)$ is an isomorphism, it's suffice to prove that $\mathcal{P}_{\alpha}-\lambda:D^{k+2,\theta}_{\mathcal{P}_{\alpha}}\left(M, E\right)\to C^{k,\theta}_{Qb}\left(M, E\right)$ is an isomorphism and that $\mathcal{K}_{\alpha}$ is a compact operator.
\end{remq}

\subsection{The isomorphism theorem}

\begin{theo}[Isomorphism theorem]
	\label{isomorphism_theorem}
	Let $\mathcal{C}^{k;j,\theta}(M, E)$ be the functional space defined by:
	\begin{equation*}
		\mathcal{C}^{k;j,\theta}(M, E) = \left\{h\in C^{k+j+\lfloor\theta\rfloor,\theta-\lfloor\theta\rfloor}_{loc}(M, E)\:\vert\: x_{max}^{-i}\nabla^ih\in C^{j+\lfloor\theta\rfloor,\theta-\lfloor\theta\rfloor}(M, E),\:\:\forall i=0,\dots,k\right\}.
	\end{equation*}
	such that $\theta\in(0,2)$, and endowed with the norm:
	\begin{equation*}
		\vert\vert h\vert\vert_{\mathcal{C}^{k;j,\theta}(M, E)} = \sum_{i=0}^{k}\vert\vert x_{max}^{-i}\nabla^ih\vert\vert_{C^{j+\lfloor\theta\rfloor,\theta-\lfloor\theta\rfloor}(M, E)}
	\end{equation*}
	Suppose also that:
	\begin{equation}
		\label{condition_lunardi_k_superieur}
			\vert\vert Rm(g)\ast V\vert\vert_{C^0(M,E)}+\vert\vert \nabla V\vert\vert_{C^{0}(M,E)}< \infty
	\end{equation}
	Then, for any constant $\lambda\in \mathbb{R}$ such that:
	\begin{equation}
		\label{condition_on_lambda_k_superior}
	    \lambda>\max\left(\sup\limits_{M}V\ln(v^{\alpha}x_{max}^k),\sup\limits_{M}V\ln(v^{\alpha}x_{max}^{k-1})\right)
	\end{equation}
	we have that:
	\begin{itemize}
		\item There exists a positive constant $C$ such that for any $H\in C^{k,\theta}_{Qb}(M, E)$ there exists a unique $h\in D^{k+2,\theta}_{\mathcal{P}_{\alpha}}(M, E)$ satisfying:
		\begin{equation}
			\mathcal{P}_{\alpha}(h) - \lambda h=H,\: \vert\vert h\vert\vert_{D^{k+2,\theta}_{\mathcal{P}_{\alpha}}(M, E)}\le C\vert\vert H\vert\vert_{C^{k,\theta}_{Qb}(M, E)};\: \theta\in [0,1)
		\end{equation}
		i.e. the operator 
		\begin{equation}
			\label{iso_first_estimate_of_h}
			\mathcal{P}_{\alpha}-\lambda: D^{k+2,\theta}_{\mathcal{P}_{\alpha}}(M, E)\to C^{k,\theta}_{Qb}(M, E)
		\end{equation}
		is an isomorphism of Banach spaces. Moreover, $D^{k+2}_{\mathcal{P}_{\alpha}}(M, E)$ embeds continuously in $\mathcal{C}^{k;0,\theta}(M, E)$ for any $\theta\in(0,2)$, i.e there exists a positive constant C such that for any $h\in D^{k+2}_{\mathcal{P}_{\alpha}}(M, E)$,
		\begin{equation*}
			\vert\vert h\vert\vert_{\mathcal{C}^{k;0,\theta}(M, E)}\le C \vert\vert h\vert\vert^{\frac{\theta}{2}}_{D^{k+2}_{\mathcal{P}_{\alpha}}(M, E)}\vert\vert h\vert\vert^{1-\frac{\theta}{2}}_{C^k_{Qb}(M, E)}
		\end{equation*}
		\item There exists a positive constant $C$ such that, for $\theta\in(0,1)$
		\begin{equation}
			\label{iso_second_estimate_of_h}
			\vert\vert h\vert\vert_{\mathcal{C}^{k;2,\theta}(M, E)} \le C\vert\vert H\vert\vert_{C^{k,\theta}_{Qb}(M, E)}
		\end{equation}
	\end{itemize}
\end{theo}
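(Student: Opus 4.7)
The plan is to apply Lunardi's theorem (Theorem \ref{theoreme_lunardi_2}) to the operator $\mathcal{P}_{\alpha}$ to establish the case $k=0$, and then to bootstrap to higher $k$ by commuting $\nabla^{j}$ through $\mathcal{P}_{\alpha}$ and conjugating the resulting equation by $x_{max}^{-j}$.

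To verify Lunardi's hypotheses for $\mathcal{P}_{\alpha}$, the assumption (\ref{condition_lunardi_k_superieur}) on the $b$-vector field $V$ implies by Corollary \ref{polyho_lunardi_condition} that $Rm(g)\ast V$ and $\nabla V$ are polyhomogeneous with non-negative index families, so their higher covariant derivatives, rescaled by the appropriate powers of $x_{max}$, are bounded. The contribution $2\alpha\nabla\ln v$ in $V_{\alpha}=V+2\alpha\nabla\ln v$ is a QAC-vector field by part (ii) of Proposition \ref{condition_lunardi_remark}, so (\ref{lunardi_requirement}) also holds for $V_{\alpha}$. The potential $r_{\alpha}=\alpha V(v)/v$ is smooth on $X$ by the characterization (\ref{b_vector_field}) of $b$-vector fields, hence bounded with bounded covariant derivatives on $M$. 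The exhaustion function $\phi=-\ln v$ from Proposition \ref{exaustion_function_qac_manifold} yields (\ref{lunardi_condition_phi_2}). Theorem \ref{theoreme_lunardi_2} applied to $\mathcal{P}_{\alpha}$ then gives the isomorphism and Schauder estimate for $k=0$, since $\sup r_{\alpha}=\sup V\ln v^{\alpha}$ matches (\ref{condition_on_lambda_k_superior}) at that level; noting that $C^{0,\theta}_{Qb}(M,E)=C^{0,\theta}(M,E)$ and $\mathcal{C}^{0;0,\theta}(M,E)=C^{0,\theta}(M,E)$, the interpolation embedding also follows directly for $\theta\in(0,2)$.

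For the inductive step $k\geq 1$, I would derive the equation satisfied by $\tilde{h}_j:=x_{max}^{-j}\nabla^{j}h$ for each $1\leq j\leq k$ by commuting $\nabla^{j}$ through $\mathcal{P}_{\alpha}$ and then conjugating by $x_{max}^{-j}$. The commutators $[\nabla^{j},\Delta]$ and $[\nabla^{j},\nabla_{V_{\alpha}}]$ produce curvature and derivative-of-$V_{\alpha}$ terms that, by Corollaries \ref{covariant_polyhomogeneous_tensor} and \ref{polyho_lunardi_condition}, yield a bounded $C^{0,\theta}_{Qb}$ perturbation acting on $\tilde{h}_j$, together with lower-order couplings to $\tilde{h}_{j-1},\dots,\tilde{h}_0$. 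The conjugation by $x_{max}^{-j}$ shifts the zeroth-order coefficient by $j\,V_{\alpha}\ln x_{max}$ plus decaying terms, so its supremum is governed by $\sup V\ln(v^{\alpha}x_{max}^j)$. This is exactly condition (\ref{condition_on_lambda_k_superior}), with the auxiliary term $\sup V\ln(v^{\alpha}x_{max}^{k-1})$ arising from the first-order commutator that couples $\tilde{h}_k$ to $\tilde{h}_{k-1}$. Iterating Lunardi's theorem at each level $j=0,1,\dots,k$ and combining the resulting estimates produces both the isomorphism (\ref{iso_first_estimate_of_h}) and the $\mathcal{C}^{k;0,\theta}$ embedding. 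Uniqueness at level $k$ follows from uniqueness at level $0$ applied to the equation that $h$ itself satisfies.

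For the estimate (\ref{iso_second_estimate_of_h}), I would combine the global control just obtained with local interior Schauder estimates (Theorem \ref{schauder_estimate}) applied in QAC-coordinate balls of radius comparable to the uniform injectivity radius, using the bounded geometry of $(M,g_{QAC})$. In such balls the coefficients of $\mathcal{P}_{\alpha}$ have uniform $C^{k,\theta}$ bounds, so the local estimate upgrades $h,\mathcal{P}_{\alpha}(h)\in C^{k,\theta}_{Qb}$ to the required $\mathcal{C}^{k;2,\theta}$ control once the rescaling is tracked. The main technical obstacle is the commutator bookkeeping for general $k$, and in particular verifying that after conjugation the new zeroth-order coefficient is governed, modulo decaying terms, precisely by $\sup V\ln(v^{\alpha}x_{max}^j)$; this is what dictates the exact form of the hypothesis on $\lambda$ in (\ref{condition_on_lambda_k_superior}).
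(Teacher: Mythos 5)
Your base case and the overall induction-plus-commutator strategy match the paper's proof, but there is a genuine gap in the inductive step: the identification of the Lunardi solution of the commuted equation with the rescaled derivative of $h$. After commuting $\nabla^{k}$ through $\mathcal{P}_{\alpha}$ and conjugating by $x_{max}^{-k}$, Theorem \ref{theoreme_lunardi_2} produces \emph{some} bounded solution $\tilde{h}_k\in D^{2,\theta}_{\mathcal{P}_{\alpha}}$ of the derived equation with the desired estimate; but to conclude that $h\in D^{k+2,\theta}_{\mathcal{P}_{\alpha}}$ you must show $\tilde{h}_k= h_k:=x_{max}^{-k}\nabla^k h$. The induction hypothesis only gives boundedness of $x_{max}^{-(k-1)}\nabla^k h$, so $h_k$ is a priori only $O(x_{max}^{-1})$ and may be unbounded near the maximal hypersurfaces; the uniqueness clause of Lunardi's theorem, which applies to bounded solutions, cannot be invoked for the difference $T=\tilde{h}_k-h_k$. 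The paper closes this gap with a separate maximum principle for tensors of order $O(x_{max}^{-1})$ (Proposition \ref{prop_non_bounded_function_maximum_principal} and Corollary \ref{unbounded_tensor_maximum_principal}), which uses the barrier $x_{max}^{\theta}$ with $\theta\in(-2,-1)$ in place of the exhaustion function.

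This also corrects your account of where the second spectral condition comes from: the hypothesis $\lambda>\sup_M V\ln(v^{\alpha}x_{max}^{k-1})$ is \emph{not} forced by the first-order commutator coupling $\tilde{h}_k$ to $\tilde{h}_{k-1}$ — those couplings are source terms controlled by the induction hypothesis and impose no condition on $\lambda$. Rather, it is exactly what is needed (via Lemma \ref{remark_on_existence_of_theta}) to choose $\theta\in(-2,-1)$ with $\lambda>\sup_M V\ln(v^{\alpha}x_{max}^{k+\theta})$, so that the barrier argument in the unbounded maximum principle goes through and forces $T\equiv 0$. Your treatment of (\ref{iso_second_estimate_of_h}) via interior Schauder estimates is a plausible alternative to the paper's direct use of the $C^{2,\theta}$ part of Lunardi's theorem applied to $h_k$, but it too presupposes the identification $\tilde{h}_k=h_k$, so the missing maximum principle for unbounded tensors is the essential omission.
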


In order to prove the previous theorem, we are going to proceed by induction on $k$. Let us consider the case $k=0$.

\begin{theo}[Isomorphism theorem (k=0)]
	\label{isomorphism_theorem_0}
	Suppose that:
	\begin{equation}
	 \label{condition_lunardi_k_0}
		\vert\vert Rm(g)\ast V\vert\vert_{C^0(M,E)}+\vert\vert \nabla V\vert\vert_{C^{0}(M,E)}<\infty.
	\end{equation}
	Then, for any constant $\lambda\in\mathbb{R}$ such that:
	\begin{equation}
		\label{condition_on_lambda_k_0}
		\lambda>\sup\limits_{M}\left(V\ln(v^{\alpha})\right),
	\end{equation}
	we have that:
	\begin{itemize}
		\item There exists a positive constant $C$ such that, for any $H\in C^0(M, E)$, there exists a unique tensor $h\in D^2_{\mathcal{P}_{\alpha}}(M, E)$ satisfying
		\begin{equation*}
			\mathcal{P}_{\alpha}(h)-\lambda h=H ,\: \vert\vert h\vert\vert_{D^2_{\mathcal{P}_{\alpha}}}\le C\vert\vert H\vert\vert_{C^0(M, E)}.
		\end{equation*}
		Moreover, $D^2_{\mathcal{P}_{\alpha}}(M, E)$ is continuously embedded in $C^{\theta}(M, E):= C^{\lfloor\theta\rfloor, \theta-\lfloor\theta\rfloor}(M, E)$ for any $\theta\in\left(0,2\right)$, i.e there exists a positive constant $C$ such that for any $h\in D^2_{\mathcal{P}_{\alpha}}(M, E)$,
		\begin{equation*}
			\vert\vert h\vert\vert_{C^{\theta}(M, E)}\le C\vert\vert h\vert\vert^{\frac{\theta}{2}}_{D^2_{\mathcal{P}_{\alpha}}(M, E)} \vert\vert h\vert\vert^{1-\frac{\theta}{2}}_{C^0(M, E)}.
		\end{equation*}
		\item There exists a positive constant $C$ such that, for any $H\in C^{0,\theta}(M, E)$, with $\theta\in(0,1)$, there exists a unique tensor $h\in C^{2,\theta}(M, E)$ satisfying
		\begin{equation*}
			\mathcal{P}_{\alpha}(h)-\lambda h=H ,\: \vert\vert h\vert\vert_{C^{2,\theta}(M, E)}\le C\vert\vert H\vert\vert_{C^{0,\theta}(M, E)}.
		\end{equation*}
		Moreover, the operator $\mathcal{A}:D^{2,\theta}_{\Delta_V,\alpha}(M, E)\to C^{0,\theta}_{Qb,\alpha}(M, E)$ is an isomorphism of Banach spaces.
	\end{itemize}
	\begin{proof}
		Condition \ref{condition_lunardi_k_0} together with remark \ref{remark_lunardi_condition} implies condition $(\ref{lunardi_requirement})$ of theorem \ref{theoreme_lunardi_2}. We also have that condition \ref{lunardi_condition_phi_2} is satisfied by proposition \ref{exaustion_function_qac_manifold}. This proves the first point and the first part of the second point. In order to prove that $\mathcal{A}$ is an isomorphism of Banach spaces we are going to use the fact that the index of a Fredholm operator remains unchanged under a perturbation by a compact operator.
		Thus, if $\mathcal{P}_{\alpha}-\lambda$ is an isomorphism and $\mathcal{K}_{\alpha}$ is compact, then $\mathcal{P}_{\alpha}+\mathcal{K}_{\alpha}-\lambda$ is of index $0$. Then, injectivity of $\mathcal{A}$ implies surjectivity.
		
		 $(1)$ $\mathcal{A}:D^{2,\theta}_{\Delta_V, \alpha}(M, E)\to C^{0,\theta}_{\alpha}(M, E)$ is injective.\\
		 Let $h\in D^{2,\theta}_{\Delta_V,\alpha}(M, E)$ be such that $\mathcal{A}(h)=0$ and $h_k = h - \frac{\phi}{k}$  with $\phi$ the smooth function of proposition \ref{exaustion_function_qac_manifold}. Then, $\sup\limits_{p\in M}h_k= h_k(p_k)$ for some $p_k\in M$. Moreover $\lim_{k\to\infty}\sup\limits_{p\in M}h_k=\sup\limits_{p\in M}h$.\\
		 Since $\mathcal{A}(h_k) = -\frac{\mathcal{A}(\phi)}{k}$, we have that $\mathcal{A}(h_k) \ge -\frac{\sup\limits_{p\in M}\mathcal{A}(\phi)}{k}$. Evaluating the last inequality at point $p_k$ we get $(\lambda - \sup\limits_{p\in M}V\ln(v^{\alpha})) h_k(p_k)\le \frac{\sup\limits_{p\in M}\mathcal{A}(\phi)}{k}$. By taking the limit $k\to\infty$ we find that $\sup\limits_{p\in M}h\le 0$. By applying the same method to $-h$ we deduce that $h=0$. Hence, $\mathcal{A}$ is injective.
		 
		 $(2)$ $\mathcal{K}_{\alpha}:D^{2,\theta}_{\mathcal{P}_{\alpha}}(M, E)\to C^{0,\theta}(M, E)$ is a compact operator.\\
		 Let $\left(h_k\right)_{k\in\mathbb{N}}$ be a bounded sequence in $D^{2,\theta}_{\mathcal{P}_{\alpha}}(M, E)$ and $\left(U_k\right)_{k\in\mathbb{N}}$ be a sequence of precompact open sets of $M$ such that $\overline{U_k}\subset U_{k+1}$ and $M=\cup_k U_k$.
		 By Schauder estimates \ref{schauder_estimate}, the sequence $\left(h_k\right)_k$ is bounded in $C^{2,\theta}(U_i, E\vert_{U_i})$ for any $i$.\\ 
		 Since $C^{2,\theta}(\overline{U_i}, E\vert_{\overline{U_i}})$ is compactly embedded into $C^2(\overline{U_i}, E\vert_{\overline{U_i}})$, there exists a sub-sequence $(h_k^i)_k$ that converges uniformly in $C^2(\overline{U_i}, E\vert_{\overline{U_i}})$. Let $(g_k)_k$ be a sub-sequence such that $g_k= h^k_k$. Then, $(g_k)_k$ converges in the topology of $C^2_{loc}(M, E)$ to $h\in D^{2,\theta}_{\mathcal{P}_{\alpha}}(M, E)$ (since it converges uniformly on every compact of $M$).\\
		 Before we finish the proof, we need the following lemma:
		 \begin{lemm}
		 	Let $h\in C^{0,\theta}(M, E)$ and $f\in C^{1}(M)$. Then, for any compact set $K\subset M$ there exists a precompact set $Q$ containing $K$ and a positive constant $C$ such that:
		 	\begin{equation*}
		 		\vert\vert f h\vert\vert_{C^{0,\theta}(M, E)}\le C \left(\vert\vert f\vert\vert_{C^1(Q)}  \vert\vert h\vert\vert_{C^{0,\theta}(Q, E\vert_{Q})} +\vert\vert f\vert\vert_{C^1(M\backslash K)}  \vert\vert h\vert\vert_{C^{0,\theta}(M\backslash K, E\vert_{M\backslash K})}\right).
		 	\end{equation*}
		 	\begin{proof}
		 		Let $Q$ be a precompact set containing $K$ such that $\forall x\in K$ we have that $B(x,\delta)\subset Q$ ($\delta$ being the injectivity radius). Then:
		 		\begin{align*}
		 			\vert\vert f h\vert\vert_{C^{0,\theta}(M, E)}&\le \left(	\vert\vert f h\vert\vert_{C^{0,\theta}(Q, E\vert_Q)} + \vert\vert f h\vert\vert_{C^{0,\theta}(M\backslash K, E\vert_{M\backslash K})} \right) \\
		 			&\le \left(	\vert\vert f \vert\vert_{C^{0,\theta}(Q)} \vert\vert h\vert\vert_{C^{0,\theta}(Q, E\vert_Q)} + \vert\vert f \vert\vert_{C^{0,\theta}(M\backslash K)} \vert\vert h\vert\vert_{C^{0,\theta}(M\backslash K, E\vert_{M\backslash K})} \right).
		 		\end{align*}
		 		Now using a local version of the mean value theorem, it is easy to see that there exists a positive constant $C$ (that depends only on the injectivity radius) such that:
		 		\begin{align*}
		 			\vert\vert f \vert\vert_{C^{0,\theta}(Q)} &\le C \vert\vert f \vert\vert_{C^{1}(Q)}, \\
		 			\vert\vert f \vert\vert_{C^{0,\theta}(M\backslash K)} &\le C \vert\vert f \vert\vert_{C^{1}(M\backslash K)} .
		 		\end{align*}
		 	 Consequently, we have that 
		 	 \begin{equation*}
		 	 	\vert\vert f h\vert\vert_{C^{0,\theta}(M, E)}\le C \left(\vert\vert f\vert\vert_{C^1(Q)}  \vert\vert h\vert\vert_{C^{0,\theta}(Q, E\vert_{Q})} +\vert\vert f\vert\vert_{C^1(M\backslash K)}  \vert\vert h\vert\vert_{C^{0,\theta}(M\backslash K, E\vert_{M\backslash K})}\right).
		 	 \end{equation*}
		 	\end{proof}
		 \end{lemm}
		 Since $\lim_{p\to\infty}\vert\nabla^i\mathcal{K}_{\alpha}\vert(p) = 0$ for $i=0, 1$ (by proposition \ref{condition_lunardi_remark}); it follows that for all $\epsilon>0$ there exists a compact set $K\subset M$ such that $\vert\vert \mathcal{K}_{\alpha}\vert\vert_{C^1(M\backslash K)}<\epsilon$.\\ Using the previous lemma, we have that
		 \begin{multline*}
		 	\vert\vert \mathcal{K}_{\alpha}\left(g_n-h\right)\vert\vert_{C^{0,\theta}(M, E)}\le C(\vert\vert \mathcal{K}_{\alpha}\vert\vert_{C^1(Q)}  \vert\vert \left(g_n-h\right)\vert\vert_{C^{0,\theta}(Q, E\vert_{Q})} +\\
		 	\vert\vert \mathcal{K}_{\alpha}\vert\vert_{C^1(M\backslash K)}  \vert\vert \left(g_n-h\right)\vert\vert_{C^{0,\theta}(M\backslash K, E\vert_{M\backslash K})}) < C\epsilon
		 \end{multline*}
		 for some precompact set $Q$ containing $K$ and $n$ large enough. This proves that $\left(\mathcal{K}_{\alpha}g_n\right)_{n\in\mathbb{N}}$ converges to $\mathcal{K}_{\alpha}h$ in the $C^{0,\theta}(M, E)$ topology, which proves that $\mathcal{K}_{\alpha}$ is a compact operator. 
	\end{proof}
\end{theo}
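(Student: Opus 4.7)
The plan is to reduce to Lunardi's Theorem \ref{theoreme_lunardi_2} applied to the operator $\mathcal{P}_{\alpha}$, and then upgrade to $\mathcal{A}$ by treating the extra zeroth-order term $\mathcal{K}_{\alpha}$ as a compact perturbation. First I would verify that all hypotheses of Theorem \ref{theoreme_lunardi_2} hold for $\mathcal{P}_{\alpha}=\Delta_{V_{\alpha}}+r_{\alpha}$ on $(M,g_{QAC})$. The bounded-geometry assumptions on the curvature, on $\nabla V_{\alpha}$, and on $\mathrm{Rm}(g)\ast V_{\alpha}$ follow from Corollary \ref{polyho_lunardi_condition} combined with Proposition \ref{condition_lunardi_remark}$(ii)$: the vector field $V_{\alpha}=V+2\alpha\nabla\ln v$ differs from $V$ by a $QAC$-vector field, so hypothesis $(\ref{condition_lunardi_k_0})$ propagates from $V$ to $V_{\alpha}$ and, by polyhomogeneity, to all higher covariant derivatives, as recorded in Remark \ref{remark_lunardi_condition}. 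The scalar $r_{\alpha}=\alpha V\ln v$ is bounded above with bounded derivatives by Proposition \ref{condition_lunardi_remark}$(i)$, and the exhaustion hypothesis $(\ref{lunardi_condition_phi_2})$ is provided by Proposition \ref{exaustion_function_qac_manifold} applied to $\phi=-\ln v$. Feeding these into Theorem \ref{theoreme_lunardi_2} immediately yields existence, uniqueness, and the a priori bounds in both bullet points at the level of $\mathcal{P}_{\alpha}-\lambda$, including the interpolation into $C^{\theta}$ and the $C^{2,\theta}$ Schauder estimate.

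To promote this to the statement about $\mathcal{A}$ itself, I would use the conjugation formula $(\ref{conjugated_operator_A})$: with $f=v^{-\alpha}$ one has $\mathcal{A}_f=(\mathcal{P}_{\alpha}-\lambda)+\mathcal{K}_{\alpha}$. Since the previous step shows $\mathcal{P}_{\alpha}-\lambda$ is an isomorphism $D^{2,\theta}_{\mathcal{P}_{\alpha}}\to C^{0,\theta}_{Qb}$, stability of the Fredholm index under compact perturbations reduces the remaining work to two sub-tasks: $(a)$ showing that multiplication by $\mathcal{K}_{\alpha}$ acts as a compact operator $D^{2,\theta}_{\mathcal{P}_{\alpha}}\to C^{0,\theta}$, and $(b)$ showing that $\mathcal{P}_{\alpha}+\mathcal{K}_{\alpha}-\lambda$ is injective; once the index is known to be zero, injectivity yields bijectivity and the open mapping theorem supplies the norm estimate.

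Injectivity I would handle by a maximum principle argument built around the exhaustion $\phi=-\ln v$: for $h$ in the kernel of $\mathcal{A}_f$, the approximations $h_k=h-\phi/k$ attain their supremum at some $p_k\in M$, and the pointwise bound at $p_k$ together with the hypothesis $\lambda>\sup_M V\ln(v^{\alpha})$ forces $\sup_M h\le 0$; repeating the argument with $-h$ yields $h\equiv 0$. For compactness of $\mathcal{K}_{\alpha}$, given a bounded sequence in $D^{2,\theta}_{\mathcal{P}_{\alpha}}$, interior Schauder estimates (Theorem \ref{schauder_estimate}) provide a uniform $C^{2,\theta}_{loc}$ bound, so an Arzela--Ascoli diagonal argument extracts a subsequence converging in $C^{2}_{loc}$ to some limit $h$. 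To upgrade $\mathcal{K}_{\alpha}h_n\to\mathcal{K}_{\alpha}h$ to convergence in the global $C^{0,\theta}(M,E)$ norm, I would split $M$ into a large compact piece $K$ and its complement, exploiting Proposition \ref{condition_lunardi_remark}$(iii)$ to make $\vert\vert\mathcal{K}_{\alpha}\vert\vert_{C^1(M\setminus K)}$ arbitrarily small.

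The step I expect to be the main obstacle is precisely this last one: controlling the product $\mathcal{K}_{\alpha}(h_n-h)$ in the global H\"older norm on the non-compact complement of $K$. This requires a product estimate of the form $\vert\vert fg\vert\vert_{C^{0,\theta}}\lesssim \vert\vert f\vert\vert_{C^1}\vert\vert g\vert\vert_{C^{0,\theta}}$, valid on each piece, which in turn needs a short technical lemma bounding the H\"older seminorm of a $C^1$ function by its $C^1$ norm via the mean value theorem on geodesic balls of radius at most the injectivity radius of $g_{QAC}$.
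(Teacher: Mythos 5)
Your proposal is correct and follows essentially the same route as the paper's proof: reduction to Theorem \ref{theoreme_lunardi_2} for $\mathcal{P}_{\alpha}$, the Fredholm-index argument for the compact perturbation $\mathcal{K}_{\alpha}$, the maximum principle with the exhaustion $\phi=-\ln v$ for injectivity, and the Arzela--Ascoli plus decay-at-infinity argument (with the same $C^1$-to-H\"older product lemma) for compactness.
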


\begin{prop}
	\label{prop_non_bounded_function_maximum_principal}
	Let $f$ be a $C^2_{loc}(M)$ function such that:
	\begin{equation}
		\label{prop_p_of_f_equal_0}
		\mathcal{T}(f)= (\Delta_{V+2\alpha \nabla\ln(v)}+V\ln(v^{\alpha}x_{max}^k)-\lambda) (f) \ge 0\text{ and } f=O(x_{max}^{-1}),
	\end{equation}
	$\lambda$ being a constant such that:
	\begin{equation}
		\label{lambda_constant_lemma}
		\lambda>\max\left(\sup\limits_{M}V\ln(v^{\alpha}x_{max}^k),\sup\limits_{M}V\ln(v^{\alpha}x_{max}^{k-1})\right).
	\end{equation}
	Then, $\sup\limits_{M}f\le 0$. \\
	\begin{proof}
		   First of all, let us note that $f$ is only potentially unbounded near the maximal hypersurfaces (since $f=O(x_{max}^{-1})$). If $f$ is bounded above then we will use the exhaustion function of proposition \ref{exaustion_function_qac_manifold} to prove the proposition. Otherwise, $f$ is unbounded from above near maximal hypersurfaces, and we will use $x_{max}^{\theta}$ with $\theta<-1$ as a barrier function. \\
		   Before we proceed, let us note that inequality \ref{prop_p_of_f_equal_0} implies that:
		   \begin{equation}
		   	\label{prop_p_of_f_equal_0_first_inequality}
		   	\Delta_{V+2\alpha \nabla\ln(v)} (f)\ge (\lambda - V\ln(v^{\alpha}x_{max}^k)) f.
		   \end{equation}
		   In addition, given a function $f_s$ that attains its supremum at a point $p_s\in M$ we have that:
		   \begin{equation}
		   	\label{prop_p_of_f_equal_0_second_inequality}
		   	\Delta_{V+2\alpha \nabla\ln(v)}(f_s)(p_s)\le 0.
		   \end{equation}
		   \textbf{First case}: $f$ is bounded above on $M$:\\
		   Let us define $f_s = f - \frac{\phi}{s}$, $\phi$ being the function in proposition \ref{exaustion_function_qac_manifold}. Since $f$ is bounded above, $f_s$ attains its supremum at some point $p_s\in M$.  Using inequality $(\ref{prop_p_of_f_equal_0_first_inequality})$ we deduce that:
		   \begin{align*}
		   	\Delta_{V+2\alpha \nabla\ln(v)}(f_s) = \Delta_{V+2\alpha \nabla\ln(v)}(f)-\frac{1}{s}\Delta_{V+2\alpha \nabla\ln(v)}(\phi)\ge &(\lambda - V\ln(v^{\alpha}x_{max}^k))  f\\&-\frac{1}{s}\Delta_{V+2\alpha \nabla\ln(v)}(\phi).
		   \end{align*}
		   Combining this inequality with the fact that 
		   \begin{equation*}
		  (\lambda - V\ln(v^{\alpha}x_{max}^k))  f-\frac{1}{s}\Delta_{V+2\alpha \nabla\ln(x)}(\phi) = (\lambda - V\ln(v^{\alpha}x_{max}^k)) f_s-\frac{1}{s}\mathcal{T}(\phi).
		   \end{equation*}
		   we obtain that:
		   \begin{equation*}
		   	\Delta_{V+2\alpha \nabla\ln(v)}(f_s)\ge(\lambda - V\ln(v^{\alpha}x_{max}^k)) f_s-\frac{1}{s}\mathcal{T}(\phi).
		   \end{equation*}
		   Finally, evaluating this inequality at $p_s$ and using inequality $(\ref{prop_p_of_f_equal_0_second_inequality})$ we obtain that:
		   \begin{equation*}
		  (\lambda - V\ln(v^{\alpha}x_{max}^k)) f_s(p_s)\le \frac{1}{s}\mathcal{T}(\phi)(p_s).
		   \end{equation*}
		   By letting $s\to\infty$ and using the fact that $\mathcal{T}(\phi)$ is bounded above (proposition \ref{exaustion_function_qac_manifold}), we obtain that \\$\sup\limits_{M}f\le 0$.\\
		   \textbf{Second case}: $f$ is unbounded above near the maximal hypersurfaces:\\
		    Let $\theta\in(-2, -1)$ be a constant such that (see lemma \ref{remark_on_existence_of_theta} below for a proof of existence)
		    \begin{equation}
		    	\label{existence_of_theta}
		    	\lambda>\max\left(\sup\limits_{M}V\ln(v^{\alpha}x_{max}^k),\sup\limits_{M}V\ln(v^{\alpha}x_{max}^{k+\theta})\right)
		    \end{equation}
		    and let us set $f_s = f - \frac{x_{max}^{\theta}}{s}$. Since $f$ is unbounded above near maximal hypersurfaces, there exists $s_0\in\mathbb{N}_0$ such that for $s\ge s_0$ there exists a point $p_s\in M$ such that $\sup\limits_{p\in M}f_s(p) = f_s(p_s)$.\\
			Using inequality $(\ref{prop_p_of_f_equal_0_first_inequality})$, we have that:
			\begin{align*}
				\Delta_{V+2\alpha \nabla\ln(v)}(f_s) &= \Delta_{V+2\alpha \nabla\ln(v)}( f) - \frac{1}{s}\Delta_{V+2\alpha \nabla\ln(v)}(x_{max}^{\theta})\\
				&\ge \left(\lambda-V\ln(v^{\alpha}x_{max}^k)\right)f-\frac{1}{s}\Delta_{V+2\alpha \nabla\ln(v)}(x_{max}^{\theta})\\
				&\ge \left(\lambda-V\ln(v^{\alpha}x_{max}^k)\right)f_s- \frac{1}{s}\mathcal{P}(x_{max}^{\theta}).
			\end{align*}
			When evaluating the previous inequality at $p_s$, we have that:
			\begin{equation*}
				\left(\lambda-V\ln(v^{\alpha}x_{max}^k)\right)(f_s)(p_s) \le \frac{1}{s}\mathcal{T}(x_{max}^{\theta})(p_s).
			\end{equation*}
			A simple computation shows that 
			\begin{align*}
				\Delta_{V+2\alpha \nabla\ln(v)}(x_{max}^{\theta}) &= \Delta x_{max}^{\theta} + V(x_{max}^{\theta}) +2\alpha \nabla\ln(v)(x_{max}^{\theta})\\
				&= \Delta x_{max}^{\theta}  + x_{max}^{\theta} V\ln(x_{max}^{\theta}) + 2\alpha\theta v x_{max}^{\theta+1}\left<\frac{dv}{v^2},\frac{dx_{max}}{x_{max}^2}\right> \\
				&\le C + x_{max}^{\theta} V\ln(x_{max}^{\theta})
			\end{align*}
			since both $\Delta x_{max}^{\theta}$ and $v x_{max}^{\theta+1}\left<\frac{dv}{v^2},\frac{dx_{max}}{x_{max}^2}\right> $ are bounded by corollary \ref{corol_laplace_polyhomogene}.\\
			Consequently, using inequality $(\ref{existence_of_theta})$ we deduce that
			\begin{align*}
				\left(\lambda-V\ln(v^{\alpha}x_{max}^k)\right)(f_s)(p_s) &\le \frac{1}{s} \left(x_{max}^{\theta}\left(V\ln(x_{max}^{\theta})+V\ln(v^{\alpha}x_{max}^k) -\lambda\right)+ C \right) \\
				&\le \frac{1}{s} \left(x_{max}^{\theta}\left(V\ln(v^{\alpha}x_{max}^{k+\theta}) -\lambda\right)+ C \right) \\
				&\le \frac{C}{s}.
			\end{align*}
			This implies that when $s\to\infty$ we have $\sup\limits_{p\in M}f(p)\le 0$ which contradicts the hypothesis of the \textbf{second case}. 
	\end{proof}
	
	\begin{lemm}
		\label{remark_on_existence_of_theta}
	    Let $\lambda\in\mathbb{R}$ be such that 
	    \begin{equation}
	    	\lambda>\max\left(\sup\limits_{M}V\ln(v^{\alpha}x_{max}^k),\sup\limits_{M}V\ln(v^{\alpha}x_{max}^{k-1}) \right).
	    \end{equation}
	    Then, there exists $\theta\in (-2, -1)$ such that 
	    \begin{equation}
	    	\label{inequality_lambda_theta}
	    	\lambda>\max\left(\sup\limits_{M}V\ln(v^{\alpha}x_{max}^k),\sup\limits_{M}V\ln(v^{\alpha}x_{max}^{k+\theta}) \right).
	    \end{equation}
	    \begin{proof}
	    	Since $V$ is a b-vector field, we have that $V\ln(x_{max})\in C^{\infty}(X)$, thus is bounded on $M$. \\
	    	Consequently, the difference $V\ln(x_{max}^{k-1})-V\ln(x_{max}^{k+\theta})=V\ln(v_{max}^{-(1+\theta)})$ can be made arbitrary close to zero by choosing the constant $\theta\in (-2, -1)$ close enough to $-1$. This implies that $\sup\limits_{M}V\ln(v^{\alpha}x_{max}^{k+\theta})$ can be made arbitrarily close to $\sup\limits_{M}V\ln(v^{\alpha}x_{max}^{k-1})$ by a choice of a constant $\theta$ as described previously, which then preserves inequality $(\ref{inequality_lambda_theta})$.
	    \end{proof}
	\end{lemm}
	\begin{cor}
		\label{unbounded_tensor_maximum_principal}
		Let $h$ be a tensor such that $h\in\cap_{p\ge1}W^{2,p}_{loc}(M,E)$ and
		\begin{equation*}
			\mathcal{T}(h)= (\Delta_{V+2\alpha \nabla\ln(v)}+V\ln(v^{\alpha}x_{max}^k)-\lambda) (h) =0\text{ and } h=O(x_{max}^{-1}),
		\end{equation*}
		such that $\lambda>\max\left(\sup\limits_{M}V\ln(v^{\alpha}x_{max}^k),\sup\limits_{M}V\ln(v^{\alpha}x_{max}^{k-1}) \right)$.\\
		Then, $h\equiv 0$. \\
		\begin{proof}
			Let us define $f = \vert\vert h\vert\vert^2$. Then we have :
			\begin{align*}
				\mathcal{T}(f) = 2\left<\Delta_{V+2\alpha \nabla\ln(v)} h, h\right> - \left(\lambda - V\ln(v^{\alpha}x_{max}^k) \right)f +2\vert\vert\nabla h\vert\vert^2.
			\end{align*}
			Since $\mathcal{T}(h)=0$ we have that $\Delta_{V+2\alpha \nabla\ln(v)} h = \left(\lambda - V\ln(v^{\alpha}x_{max}^k)\right) h$. \\
			Consequently,
			\begin{equation*}
				\mathcal{T}(f) = \left(\lambda - V\ln(v^{\alpha}x_{max}^k) \right)f + 2\vert\vert\nabla h\vert\vert^2
			\end{equation*}
			which implies that 
			\begin{equation*}
				\mathcal{T}(f)\ge 0.
			\end{equation*}
			By applying proposition \ref{prop_non_bounded_function_maximum_principal} we get that $\sup\limits_{p\in M}f\le 0$. Thus ,$f\equiv 0$.
		\end{proof}
	\end{cor}
\end{prop}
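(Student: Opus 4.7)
The plan is to establish $\sup_M f \le 0$ by a perturbation-and-limit argument, splitting into two cases according to whether $f$ is bounded above on $M$. The rationale for the splitting is that the hypothesis $f = O(x_{max}^{-1})$ controls $f$ only away from the maximal hypersurfaces, so $f$ could in principle blow up there. In each case I would subtract a small multiple $\psi_s/s$ of a suitable auxiliary function from $f$ so that $f_s := f - \psi_s/s$ attains its supremum at some interior point $p_s$, apply the classical maximum principle for $\Delta_{V+2\alpha\nabla\ln v}$ (which, being a Laplacian plus a first-order drift, satisfies $\Delta_{V+2\alpha\nabla\ln v}(f_s)(p_s) \le 0$ at an interior maximum), and then let $s \to \infty$. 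Everything is driven by rewriting the hypothesis $\mathcal{T}(f) \ge 0$ as
\[
\Delta_{V+2\alpha\nabla\ln v}(f) \;\ge\; \bigl(\lambda - V\ln(v^{\alpha}x_{max}^k)\bigr)\,f,
\]
where the coefficient on the right is bounded below by a fixed positive constant by $\lambda > \sup_M V\ln(v^{\alpha}x_{max}^k)$.

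When $f$ is bounded above I would pick $\psi_s := \phi$, where $\phi$ is the exhaustion function of Proposition~\ref{exaustion_function_qac_manifold}. Since $\phi \to +\infty$ at infinity and $f$ is bounded above, $f_s$ attains its supremum at some $p_s \in M$. Substituting $f = f_s + \phi/s$ into the displayed inequality and invoking the maximum principle at $p_s$ gives $(\lambda - V\ln(v^{\alpha}x_{max}^k))\,f_s(p_s) \le \mathcal{T}(\phi)(p_s)/s$. Because $\mathcal{T}(\phi)$ is uniformly bounded above (its principal and first-order parts by Proposition~\ref{exaustion_function_qac_manifold}, its zeroth-order part having the favorable sign from the $\lambda$-gap combined with $\phi \ge 0$), letting $s \to \infty$ forces $\sup_M f \le 0$.

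When $f$ is unbounded above the blow-up must occur near the maximal hypersurfaces, and I need a barrier that overwhelms $f$ there. I would take $\psi_s := x_{max}^{\theta}$ with $\theta \in (-2,-1)$ chosen, via the subsequent Lemma~\ref{remark_on_existence_of_theta}, so that $\lambda > \sup_M V\ln(v^{\alpha}x_{max}^{k+\theta})$ also holds. Since $\theta < -1$ and $f = O(x_{max}^{-1})$, the perturbed function $f_s$ tends to $-\infty$ near every maximal hypersurface, so for all sufficiently large $s$ its supremum is achieved at some $p_s \in M$. A direct computation leveraging Corollary~\ref{corol_laplace_polyhomogene} (to bound $\Delta x_{max}^{\theta}$) and the fact that $\nabla \ln v$ is a QAC vector field (to bound the pairing $v\,x_{max}^{\theta+1}\langle dv/v^2,\, dx_{max}/x_{max}^2\rangle$) shows $\Delta_{V+2\alpha\nabla\ln v}(x_{max}^{\theta}) \le C + x_{max}^{\theta}\,V\ln(x_{max}^{\theta})$. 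Plugging this into the maximum-principle inequality at $p_s$ and using the $\lambda$-gap for $k+\theta$, the $x_{max}^{\theta}$ contributions collapse into the negative factor $V\ln(v^{\alpha}x_{max}^{k+\theta}) - \lambda$, leaving $(\lambda - V\ln(v^{\alpha}x_{max}^k))\,f_s(p_s) \le C/s$; letting $s \to \infty$ yields $\sup_M f \le 0$, contradicting unboundedness above. The main obstacle is exactly this barrier choice: $x_{max}^{\theta}$ must be singular enough to dominate any $O(x_{max}^{-1})$ behavior near maximal hypersurfaces (forcing $\theta < -1$) yet mild enough that the drift term $V\ln(x_{max}^{\theta})$ it generates does not destroy the $\lambda$-gap, which is what constrains $\theta \in (-2,-1)$ and makes the auxiliary lemma essential.
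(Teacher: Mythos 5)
Your proposal is correct and follows essentially the same route as the paper: the same rewriting of $\mathcal{T}(f)\ge 0$ as a lower bound on $\Delta_{V+2\alpha\nabla\ln v}(f)$, the same two-case split with $\phi$ as the perturbation in the bounded case and $x_{max}^{\theta}$, $\theta\in(-2,-1)$ chosen via the auxiliary lemma, as the barrier in the unbounded case, and the same maximum-principle-plus-limit conclusion. Your remark that the zeroth-order part of $\mathcal{T}(\phi)$ is controlled by the sign coming from the $\lambda$-gap together with $\phi\ge 0$ is in fact slightly more explicit than the paper, which attributes the upper bound on $\mathcal{T}(\phi)$ directly to the exhaustion-function proposition.
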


\subsubsection{Proof of theorem \ref{isomorphism_theorem}}
\begin{proof}
	Uniqueness follows from theorem $\ref{isomorphism_theorem_0}$. In order to prove the existence of a solution, we are going to proceed by induction on $k$. The case $k=0$ is exactly theorem $\ref{isomorphism_theorem_0}$. Let $k$ be a positive integer and $H\in C^{k,\theta}_{Qb}(M, E)$. Using the induction hypothesis and the fact that $H\in C^{k-1,\theta}_{Qb}(M, E)$, there exists $h\in D^{2+k-1}_{\mathcal{P}_{\alpha}}(M, E)$ such that $\mathcal{P}_{\alpha}(h)-\lambda h = H$.\\
	Let us define $h_i= x_{max}^{-i}\nabla^i h$ for $i=0,\dots, k$. We want to prove that $h\in D^{2+k}_{\mathcal{P}_{\alpha}}(M, E)$. This amounts to proving that $h_k\in D^{2}_{\mathcal{P}_{\alpha}}(M, E)$. In order to do that, we are going to compute the evolution equation of $h_k$. \\
	Let us recall that $\mathcal{P}_{\alpha}= \Delta + \nabla_{\underbrace{V-2\nabla\ln(v^{-\alpha})}_{V_{\alpha}}}\underbrace{-V\ln(v^{-\alpha})}_{r_{\alpha}}$, so that
	\begin{equation*}
	\mathcal{P}_{\alpha}(h_k) = \Delta_{V_{\alpha}} (h_k) + r_{\alpha} h_k
	\end{equation*}
	In order to compute $\mathcal{P}_{\alpha}(h_k)$ we are going to use lemma \ref{technical_lemma_about_commuting_convariant_derivative} below. Consequently, we have that:
	\begin{multline}
		\label{nabla_hk}
		\nabla_{V_{\alpha}}(h_k) = V_{\alpha}(x_{max}^{-k})\nabla^kh + x_{max}^{-k}\nabla_{V_{\alpha}}\nabla^kh \\
		= (-V\ln(x_{max}^k) + a )h_k + x_{max}^{-k}\nabla^{k}\nabla_{V_{\alpha}}h + x_{max}^{-k}\sum_{j=0}^{k-1}\nabla^{k-j}V_{\alpha}\ast\nabla^{j+1}h+\nabla^{k-1-j}\left(Rm(g_{QAC})\ast V_{\alpha}\right)\ast\nabla^j h\\
		= (-V\ln(x_{max}^k)  + a )h_k  + x_{max}^{-k}\nabla^{k}\nabla_{V_{\alpha}}h + \sum_{j=0}^{k-1}x_{max}^{-k+j+1}\nabla^{k-j} V_{\alpha}\ast h_{j+1}+x_{max}^{-k+j}\nabla^{k-1-j}\left(Rm(g_{QAC})\ast V_{\alpha}\right)\ast h_j
	\end{multline}
	with $a\in x_{max}C^{\infty}(X)$. We also have that:
	\begin{multline}
		\label{delta_hk}
		\Delta(h_k) = \Delta(x_{max}^{-k})\nabla^kh + b x_{max}^{-k}\nabla^{k+1}h +x_{max}^{-k} \nabla^k\Delta h + x_{max}^{-k}\sum_{j=0}^{k}\nabla^{k-j}Rm(g_{QAC})\ast\nabla^j h\\
		= b x_{max}h_{k+1}+ \left(Rm(g_{QAC})+ c \right)\ast h_k +x_{max}^{-k} \nabla^k\Delta h + \sum_{j=0}^{k-1}x_{max}^{-k+j}\nabla^{k-j}Rm(g_{QAC})\ast h_j 
	\end{multline}
	with $b,c\in x_{max}C^{\infty}(X)$. \\
	Now, using equations $(\ref{nabla_hk})$ and $(\ref{delta_hk})$ we deduce that
	\begin{align*}
		(\mathcal{P}_{\alpha} + V\ln(x_{max}^k))(h_k) - \lambda h_k = &b x_{max} h_{k+1} + H_k + \left(a + c +Rm(g_{QAC})\right)\ast h_k \\
		&+\sum_{j=0}^{k-1}x_{max}^{-k+j+1}\nabla^{k-j}V_{\alpha}\ast h_{j+1}+x_{max}^{-k+j}\nabla^{k-1-j}\left(Rm(g_{QAC})\ast V_{\alpha}\right)\ast h_j\\
		&+\sum_{j=0}^{k-1}x_{max}^{-k+j}\nabla^{k-j}Rm(g_{QAC})\ast h_j 
	\end{align*}
	such that $H_k = x_{max}^{-k}\nabla^k H$. Thus
	\begin{equation}
		\label{proof_iso_terms}
		\begin{aligned}
			\vert\vert (\mathcal{P}_{\alpha} + V\ln(x_{max}^k))(h_k) - \lambda h_k\vert\vert_{C^{0,\theta}(M, E)} \le &\vert\vert b x_{max} h_{k+1}\vert\vert_{C^{0,\theta}(M, E)} + \vert\vert H_k\vert\vert_{C^{0,\theta}(M, E)} \\
			&+ \vert\vert \left(a + c +Rm(g_{QAC})\right)\ast h_k\vert\vert_{C^{0,\theta}(M, E)} \\
			+\sum_{j=0}^{k-1}&\vert\vert x_{max}^{-k+j+1}\nabla^{k-j} V_{\alpha}\ast h_{j+1}\vert\vert_{C^{0,\theta}(M, E)}+\\
			&\vert\vert x_{max}^{-k+j}\nabla^{k-1-j}\left(Rm(g_{QAC})\ast V_{\alpha}\right)\ast h_j\vert\vert_{C^{0,\theta}(M, E)}\\
			+\sum_{j=0}^{k-1}&\vert\vert x_{max}^{-k+j}\nabla^{k-j}Rm(g_{QAC})\ast h_j \vert\vert_{C^{0,\theta}(M, E)}.
		\end{aligned}
	\end{equation}
	From the induction hypothesis, there exists a positive constant $C$ such that:
	\begin{align}
		\label{estimation_h_theorem_isomorphism}
		&\vert\vert h\vert\vert_{D^{2+k-1}_{\mathcal{P}_{\alpha}}(M, E)}\le C\vert\vert H\vert\vert_{C^{k-1}_{Qb}(M, E)}\\
		&\vert\vert h\vert\vert_{\mathcal{C}^{k-1;0,\theta}(M, E)}\le C\vert\vert h\vert\vert^{\frac{\theta}{2}}_{D^{2+k-1}_{\mathcal{P}_{\alpha}}(M, E)}\vert\vert h\vert\vert^{1-\frac{\theta}{2}}_{C^{k-1}_{Qb}(M, E)}\text{ for any }\theta\in (0, 2).
	\end{align}
	From inequality \ref{iso_second_estimate_of_h} and the induction hypothesis we have that
	\begin{equation}
		\label{estimation_h_2_theorem_isomorphism_2}
		\vert\vert x_{max}^{-(k-1)}\nabla^{k-1}h\vert\vert_{C^{2,\theta}(M, E)}\le C\vert\vert H\vert\vert_{C^{k-1}_{Qb}(M, E)},
	\end{equation}
	which then implies that 
	\begin{align*}
		&\vert\vert x_{max}^{-(k-1)}\nabla^k h\vert\vert_{C^{0,\theta}(M, E)}\le C\vert\vert H\vert\vert_{C^{k,\theta}_{Qb}(M, E)},\\
		&\vert\vert x_{max}^{-(k-1)}\nabla^{k+1} h\vert\vert_{C^{0,\theta}(M, E)}\le C\vert\vert H\vert\vert_{C^{k,\theta}_{Qb}(M, E)}.
	\end{align*}
	Using the previous inequalities we get that :
	\begin{align*}
		\vert\vert b x_{max} h_{k+1}\vert\vert_{C^{0,\theta}(M, E)}  &\le \tilde{C}\vert\vert \frac{b}{x_{max}}\vert\vert_{C^{1}(M, E)} 	\vert\vert x^{-(k-1)}_{max}\nabla^{k+1}\vert\vert_{C^{0,\theta}(M, E)},  \\
		&\le B \vert\vert H\vert\vert_{C^{k,\theta}_{Qb}(M, E)},
	\end{align*} 
	and 
	\begin{align*}
		\vert\vert \left(a + c +Rm(g_{QAC})\right)\ast h_k\vert\vert_{C^{0,\theta}(M, E)}  &\le \tilde{C}\vert\vert \frac{a + c +Rm(g_{QAC})}{x_{max}}\vert\vert_{C^{1}(M, E)} 	\vert\vert x^{-(k-1)}_{max}\nabla^{k}\vert\vert_{C^{0,\theta}(M, E)},  \\
		&\le B \vert\vert H\vert\vert_{C^{k,\theta}_{Qb}(M, E)}.
	\end{align*} 
	for some positive constant $B$. We proceed in the same manner for the other terms in (\ref{proof_iso_terms}) using the fact that $\vert\vert x_{max}^{-k+j+1}\nabla^{k-j} V_{\alpha}\vert\vert_{C^{1}(M, E)}$, $\vert\vert x_{max}^{-k+j}\nabla^{k-1-j}\left(Rm(g_{QAC})\ast V_{\alpha}\right)\vert\vert_{C^{1}(M, E)}$ and $\vert\vert x_{max}^{-k+j}\nabla^{k-j}Rm(g_{QAC})\vert\vert_{C^{1}(M, E)}$ are bounded (corollary \ref{polyho_lunardi_condition}).\\
	Thus, there exists a positive constant $B$ such that:
	\begin{equation*}
		\vert\vert (\mathcal{P}_{\alpha} + V\ln(x_{max}^k))(h_k) - \lambda h_k\vert\vert_{C^{0,\theta}(M, E)} \le B \vert\vert H\vert\vert_{C^{k,\theta}_{Qb}(M, E)}.
	\end{equation*}
	Therefore, by theorem \ref{theoreme_lunardi_2}, there exists a solution $\tilde{h}_k\in D^{2,\theta}_{\mathcal{P}_{\alpha}}(M, E)$ satisfying
	\begin{align*}
		(\mathcal{P}_{\alpha} + V\ln(x_{max}^k))(\tilde{h}_k) - \lambda \tilde{h}_k = &b x_{max} h_{k+1} + H_k + \left(a + c +Rm(g_{QAC})\right)\ast h_k \\
		&+\sum_{j=0}^{k-1}x_{max}^{-k+j+1}\nabla^{k-j}V_{\alpha}\ast h_{j+1}+x_{max}^{-k+j}\nabla^{k-1-j}\left(Rm(g_{QAC})\ast V_{\alpha}\right)\ast h_j\\
		&+\sum_{j=0}^{k-1}x_{max}^{-k+j}\nabla^{k-j}Rm(g_{QAC})\ast h_j ,
	\end{align*}
	and such that
	\begin{equation*}
		\vert\vert \tilde{h}_k\vert\vert_{C^{2,\theta}(M, E)} \le C \vert\vert H\vert\vert_{C^{k,\theta}_{Qb}(M, E)} \text{ with }\theta\in(0,1).
	\end{equation*}
	It remains to prove that $\tilde{h}_k\equiv h_k$. First of all, as $H\in C^{k,\theta}_{loc}(M, E)$ we have that $h\in C^{k+2,\theta}_{loc}(M, E)$ (by elliptic regularity). As a consequence, the difference $T = \tilde{h}_k- h_k$ satisfies:
	\begin{equation*}
		T\in\cap_{p\ge 1}W^{2,p}_{loc}(M, E)\text{ ; } \left((\mathcal{P}_{\alpha} + V\ln(x_{max}^k)) - \lambda \right)(T) = 0.
	\end{equation*}
	Near the maximal hypersurfaces, we only have that $x_{max}^{-(k-1)}\nabla^kh$ is bounded, so we deduce that $x_{max}T$ is bounded ($T=O(x_{max}^{-1})$).
	By corollary \ref{unbounded_tensor_maximum_principal} we have that $T\equiv 0$.\\
	Consequently, $h_k\in D^{2,\theta}_{\mathcal{P}_{\alpha}}(M, E)$ and
	\begin{equation*}
		\vert\vert h_k\vert\vert_{C^{2,\theta}(M,E)}\le C\vert\vert H\vert\vert_{C^{k,\theta}_{Qb}(M, E)}  \text{ with }\theta\in(0,1).
	\end{equation*}
	We also have that 
	\begin{equation*}
		\vert\vert h_k\vert\vert_{C^{\theta}(M,E)}\le C\vert\vert h\vert\vert^{\frac{\theta}{2}}_{ D^{2}_{\mathcal{P}_{\alpha}}(M, E)} \vert\vert h\vert\vert^{1-\frac{\theta}{2}}_{C^{0}(M, E)}  \text{ with }\theta\in(0,2).
	\end{equation*}
\end{proof}

As a consequence of theorem \ref{isomorphism_theorem} together with remark \ref{remark_original_operator_weighted_operator}, we have the following result:
\begin{cor}
	\label{cor_isomorphism}
	Suppose that:
	\begin{equation}
			\vert\vert Rm(g)\ast V\vert\vert_{C^0(M,E)}+\vert\vert \nabla V\vert\vert_{C^{0}(M,E)}< \infty
	\end{equation}
	Then, the operator $\mathcal{A}:D^{2+k,\theta}_{\Delta_V,\alpha}(M, E)\to C^{k,\theta}_{Qb,\alpha}(M, E)$ is an isomorphism of Banach spaces, for any $\theta\in (0,1)$ and any constant $\lambda\in\mathbb{R}$ such that:
	\begin{equation*}
		\lambda>\max\left(\sup\limits_{M}V\ln(x^{\alpha}x_{max}^k),\sup\limits_{M}V\ln(x^{\alpha}x_{max}^{k-1})\right)
	\end{equation*}
\end{cor}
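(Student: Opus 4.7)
The plan is to reduce the statement to Theorem \ref{isomorphism_theorem} via the conjugation trick of Remark \ref{remark_original_operator_weighted_operator}. Setting $f = v^{-\alpha}$, the operator $\mathcal{A} : D^{2+k,\theta}_{\Delta_V,\alpha}(M,E) \to C^{k,\theta}_{Qb,\alpha}(M,E)$ is an isomorphism if and only if the conjugated operator
\begin{equation*}
\mathcal{A}_f = (\mathcal{P}_\alpha - \lambda) + \mathcal{K}_\alpha : D^{2+k,\theta}_{\mathcal{P}_\alpha}(M,E) \to C^{k,\theta}_{Qb}(M,E)
\end{equation*}
is an isomorphism. Theorem \ref{isomorphism_theorem} already gives that $\mathcal{P}_\alpha - \lambda$ is an isomorphism under the stated condition on $\lambda$, so in particular it is Fredholm of index zero. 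The strategy is then standard: show that $\mathcal{K}_\alpha$ is a compact perturbation and that $\mathcal{A}_f$ is injective, whence the index-zero property forces surjectivity.

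For the compactness of $\mathcal{K}_\alpha : D^{2+k,\theta}_{\mathcal{P}_\alpha}(M,E) \to C^{k,\theta}_{Qb}(M,E)$, I would follow the same scheme as in the $k=0$ case treated inside Theorem \ref{isomorphism_theorem_0}. Given a bounded sequence $(h_n)$ in $D^{2+k,\theta}_{\mathcal{P}_\alpha}$, interior Schauder estimates applied on an exhaustion $U_1 \Subset U_2 \Subset \cdots \Subset M$ provide local $C^{k+2,\theta}$ bounds, so a diagonal extraction yields a subsequence converging in $C^{k+2}_{\mathrm{loc}}$ to some $h \in D^{2+k,\theta}_{\mathcal{P}_\alpha}$. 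Proposition \ref{condition_lunardi_remark}\,(iii) (together with Corollary \ref{covariant_polyhomogeneous_tensor}) shows that $\mathcal{K}_\alpha$ and all its covariant derivatives vanish at infinity, so multiplication by $\mathcal{K}_\alpha$ sends a bounded set of $C^{k,\theta}_{Qb}$ into a set that concentrates its $C^{k,\theta}_{Qb}$-mass on larger and larger compact sets. Combining this decay at infinity with the local cut-off lemma used in the $k=0$ proof (generalized in the obvious way to higher covariant derivatives via Proposition \ref{prop_product_func_section_holderestimate}) gives convergence of $\mathcal{K}_\alpha h_n$ to $\mathcal{K}_\alpha h$ in $C^{k,\theta}_{Qb}$.

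For injectivity, let $h \in D^{2+k,\theta}_{\mathcal{P}_\alpha}(M,E)$ satisfy $\mathcal{A}_f(h) = 0$. Unravelling the definition, this is exactly the hypothesis of Corollary \ref{unbounded_tensor_maximum_principal} applied to the equation for $h_k = x_{\max}^{-k}\nabla^k h$ plus the analogous equation for $h$ itself. More precisely, the assumption $h \in D^{2+k,\theta}_{\mathcal{P}_\alpha}$ forces $x_{\max}^{-k}\nabla^k h$ to be bounded, which is more than enough to get $h = O(x_{\max}^{-1})$; then the maximum-principle argument of Proposition \ref{prop_non_bounded_function_maximum_principal} applied to $\Vert h\Vert^2$, with barrier $x_{\max}^\theta$ for $\theta \in (-2,-1)$ close enough to $-1$ (Lemma \ref{remark_on_existence_of_theta}), yields $h \equiv 0$ under precisely the two-sided condition on $\lambda$ stated in the corollary.

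Combining compactness of $\mathcal{K}_\alpha$, the stability of Fredholm index under compact perturbations, and the injectivity just proved, $\mathcal{A}_f$ is a Fredholm operator of index zero with trivial kernel, hence an isomorphism; pulling back through the conjugation gives the result. The main obstacle I anticipate is the compactness step: the verification that $\mathcal{K}_\alpha$ remains compact in the higher-regularity weighted spaces rather than only in the $k=0$ Hölder setting requires careful bookkeeping of the $x_{\max}^{-i}$ weights when differentiating products $\mathcal{K}_\alpha h$, and must use the polyhomogeneity of $\mathcal{K}_\alpha$ and the decay of all its covariant derivatives near the maximal hypersurfaces, rather than mere boundedness.
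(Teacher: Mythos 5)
Your proposal reconstructs exactly the argument the paper leaves implicit. The paper's proof of this corollary is essentially a reference to Theorem~\ref{isomorphism_theorem} and Remark~\ref{remark_original_operator_weighted_operator}, together with the earlier unnumbered remark that reduces the problem to showing $\mathcal{P}_\alpha-\lambda$ is an isomorphism and $\mathcal{K}_\alpha$ is compact; the compactness and injectivity are only carried out explicitly for $k=0$ inside Theorem~\ref{isomorphism_theorem_0}. You correctly identify that both pieces must be redone for general $k$, and that the compactness of $\mathcal{K}_\alpha$ on $D^{2+k,\theta}_{\mathcal{P}_\alpha}$ is the genuinely nontrivial step, needing the decay of all covariant derivatives of $\mathcal{K}_\alpha$ from Proposition~\ref{condition_lunardi_remark} rather than mere boundedness.

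One inaccuracy in the injectivity step: you write that $\mathcal{A}_f(h)=0$ \emph{is} the hypothesis of Corollary~\ref{unbounded_tensor_maximum_principal}, but the operators do not coincide. That corollary concerns $\mathcal{T}=\Delta_{V_\alpha}+V\ln(v^{\alpha}x_{\max}^{k})-\lambda$, whereas $\mathcal{A}_f=\Delta_{V_\alpha}+V\ln(v^{\alpha})+\mathcal{K}_\alpha-\lambda$; the extra $V\ln(x_{\max}^{k})$ term appears only in the evolution equation for $h_k=x_{\max}^{-k}\nabla^k h$ inside the induction of Theorem~\ref{isomorphism_theorem}, not in the equation $\mathcal{A}_f(h)=0$. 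The simpler observation, which is what the paper implicitly uses, is that $h\in D^{2+k,\theta}_{\Delta_V}\subset D^{2,\theta}_{\Delta_V}$ and $h$ is genuinely bounded (being in $C^{k}_{Qb}$), so injectivity of $\mathcal{A}_f$ reduces directly to the exhaustion-function maximum principle of Theorem~\ref{isomorphism_theorem_0}, with no barrier $x_{\max}^\theta$ needed. The barrier and Corollary~\ref{unbounded_tensor_maximum_principal} are required only where the relevant tensor is merely $O(x_{\max}^{-1})$, which is not the case for $h$ itself. With that correction, your route and the paper's are the same.
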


\begin{cor}
	The spaces $D^{2+k,\theta}_{\Delta_V}(M, E)$ and $D^{2+k,\theta}_{\Delta_V,\alpha}(M, E)$ are Banach spaces.\\
	\begin{proof}
		We use proposition \ref{proposition_D_banach_space} and the previous corollary.
		\end{proof}
\end{cor}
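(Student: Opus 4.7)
The plan is to combine the isomorphism theorem of the previous subsection with the completeness criterion in Proposition \ref{proposition_D_banach_space}, since a bijective bounded linear map onto a Banach space automatically makes its domain complete in the $D$-norm (this being precisely the content of that proposition). So the proof reduces to supplying the appropriate isomorphism in each case and then invoking an isometry to transfer completeness to the weighted space.

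First I would pick $\lambda\in\mathbb{R}$ large enough that
\[
\lambda > \max\Bigl(\sup_M V\ln(v^{\alpha}x_{max}^k),\ \sup_M V\ln(v^{\alpha}x_{max}^{k-1})\Bigr).
\]
Such a $\lambda$ exists because $V$ is a $b$-vector field, so both $V\ln v$ and $V\ln x_{max}$ extend smoothly to $X$ and are therefore bounded on $M$. By Theorem \ref{isomorphism_theorem}, the operator $\mathcal{P}_\alpha - \lambda$ is a bijective bounded linear map from $D^{k+2,\theta}_{\mathcal{P}_\alpha}(M,E)$ onto the Banach space $C^{k,\theta}_{Qb}(M,E)$. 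Proposition \ref{proposition_D_banach_space} then yields that $D^{k+2,\theta}_{\mathcal{P}_\alpha}(M,E)$ is Banach, and invoking the identification
\[
D^{k+2,\theta}_{\mathcal{P}_\alpha}(M,E) = D^{k+2,\theta}_{\Delta_V}(M,E)
\]
established immediately after Remark \ref{remark_original_operator_weighted_operator} (with equivalent norms, the two operators differing by a first-order term $\nabla_{2\alpha\nabla\ln v}$ whose coefficient is a bounded QAC-vector field plus the bounded zeroth-order term $\alpha V\ln v$), we conclude that $D^{k+2,\theta}_{\Delta_V}(M,E)$ is Banach.

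For the weighted variant I would use that by definition $D^{k+2,\theta}_{\Delta_V,\alpha}(M,E) = v^{\alpha}D^{k+2,\theta}_{\Delta_V}(M,E)$ equipped with the norm $\|h\|_{D^{k+2,\theta}_{\Delta_V,\alpha}} := \|v^{-\alpha}h\|_{D^{k+2,\theta}_{\Delta_V}}$. Consequently multiplication by $v^{-\alpha}$ is a linear isometric bijection onto $D^{k+2,\theta}_{\Delta_V}(M,E)$, and completeness transfers immediately to the weighted space. There is no substantive obstacle here: the entire content is the observation that Proposition \ref{proposition_D_banach_space} applies once the isomorphism theorem is in place, and the lone minor verification is the existence of an admissible $\lambda$, which is automatic from the boundedness of $V\ln v$ and $V\ln x_{max}$.
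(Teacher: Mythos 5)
Your argument is correct and is essentially the paper's own proof: the paper likewise deduces completeness from Proposition \ref{proposition_D_banach_space} together with the isomorphism supplied by Theorem \ref{isomorphism_theorem} (in the guise of Corollary \ref{cor_isomorphism}), handling the weighted space via the isometry $h\mapsto v^{-\alpha}h$. The one point you pass over lightly is that the identification $D^{k+2,\theta}_{\mathcal{P}_\alpha}(M,E)=D^{k+2,\theta}_{\Delta_V}(M,E)$ must hold with equivalent norms, and the boundedness of the coefficients of the first-order difference $\nabla_{2\alpha\nabla\ln v}$ alone does not control $\nabla h$ from the $D$-norm --- one needs the a priori estimate $(\ref{iso_second_estimate_of_h})$ for that --- but this is left implicit in the paper as well.
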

	\section[Lunardi theorem]{Lunardi's theorem}
\label{section_lunadi_theorem}

In this section we study a class of linear elliptic operators of the form $\Delta +\nabla_V + r$ with unbounded coefficients. Such operators where studied by Alessandra Lunardi in \cite{lunardi_estimate} on $\mathbb{R}^n$ then a version was proven by \cite{smoothing_deruelle} in the context of Riemannian manifold and such that $r\equiv0$. 
	
\begin{theo}[Lunardi]
	\label{theoreme_lunardi}
	Let $\left(M^n, g\right)$ be a complete Riemannian manifold with positive injectivity radius, and $V$ be a smooth vector field on $M$. Let $\mathcal{A}$ be an elliptic differential operator acting on tensors over $M$ such that:
	\begin{equation}
		\label{lunardi_operator}
		\mathcal{A} = \underbrace{\Delta +\nabla_V}_{\Delta_V}+ r(x)\:,\:r\in C^3(M).
	\end{equation}
  Suppose that $sup_{x\in M} r(x)=r_0<\infty$ and that there exists a positive constant $C$ such that $\sum_{i=1}^{3}\vert\vert\nabla^i r\vert\vert<C$. Assume also that there exists a positive constant $K$ such:\\
	\begin{equation}
		\vert\vert Rm(g)\vert\vert_{C^3(M,E)}+\vert\vert Rm(g)\ast V\vert\vert_{C^3(M,E)}+\vert\vert \nabla V\vert\vert_{C^{2}(M,E)}\le K,
	\end{equation}
where $Rm(g)\ast V=Rm(g)(V,.,.,.)$. Assume also that there exists a function $\phi\in C^2(M)$ and a constant $\lambda_0\ge r_0$ such that:
\begin{equation}
	\label{lunardi_condition_phi}
	\lim_{x\to\infty}\phi (x) = +\infty, \: sup_{x\in M}\left(\mathcal{A}(\phi)(x)-\lambda_0\phi\left(x\right)\right)<\infty.
\end{equation}
Then:
\begin{enumerate}
	\item For any  $\lambda>r_0$, there exists a positive constant $C$ such that for any $H\in C^0\left(M, E\right)$, there exists a unique tensor $h\in D^2_{\mathcal{A}}\left(M, E\right)$, satisfying:
	\begin{equation*}
		\mathcal{A}(h) -\lambda h= H,\: \vert\vert h\vert\vert_{D^2_{\mathcal{A}}\left(M,E\right)}\le C \vert\vert H\vert\vert_{C^0\left(M, E\right)}.
	\end{equation*}
	Moreover $D^2_{\mathcal{A}}\left(M, E\right)$ is continuously embedded in $C^{\theta}\left(M,E\right)$ for any $\theta\in\left(0,2\right)$, i.e. there exists a positive constant $C(\theta)$ such that for any $h\in D^2_{\mathcal{A}}\left(M, E\right)$, 
	\begin{equation*}
		\vert\vert h\vert\vert_{C^{\theta}(M, E)}\le C(\theta)\vert\vert h\vert\vert^{\frac{\theta}{2}}_{D^2_{\mathcal{A}}\left(M, E\right)} \vert\vert h\vert\vert^{1-\frac{\theta}{2}}_{C^0(M, E)}.
	\end{equation*}
	\item For any $\lambda>r_0$, there exists a positive constant $C$ such that for any $H\in C^{0,\theta}\left(M, E\right)$, $\theta\in \left(0,1\right)$ , there exists a unique tensor $h\in C^{2,\theta}\left(M, E\right)$ satisfying:
	\begin{equation*}
		\mathcal{A}(h) - \lambda h= H,\: \vert\vert h\vert\vert_{C^{2,\theta}\left(M, E\right)}\le C \vert\vert H\vert\vert_{C^{0,\theta}\left(M, E\right)}.
	\end{equation*}
\end{enumerate}
\end{theo}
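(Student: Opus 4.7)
The plan is to adapt Lunardi's approach \cite{lunardi_estimate} to the Riemannian setting, extending the argument of \cite{smoothing_deruelle} (which treats the case $r\equiv 0$) to accommodate the zeroth-order term $r$ and making essential use of the exhaustion function $\phi$ furnished by condition (\ref{lunardi_condition_phi}). The proof splits into three stages: uniqueness via a maximum principle with $\phi$ as a barrier; a priori $C^2$ estimates via a Bernstein-type method applied to $|h|^2$, $|\nabla h|^2$, $|\nabla^2 h|^2$; and existence via an exhaustion by precompact domains together with a compactness argument.

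For uniqueness of $h \in D^2_{\mathcal{A}}(M,E)$ with $(\mathcal{A}-\lambda)h = 0$, I would set $f := |h|^2$ and combine the Bochner-type identity with the equation satisfied by $h$ to derive a differential inequality $\Delta_V f \geq 2(\lambda - r_0) f$ (the Kato term $2|\nabla h|^2$ being discarded since it has the favorable sign). Since $h \in D^2_{\mathcal{A}}$ is bounded, so is $f$; considering $f_k := f - \phi/k$, which attains its supremum at some $p_k\in M$ because $\phi\to+\infty$, the maximum principle at $p_k$ combined with the bound on $\mathcal{A}(\phi) - \lambda_0 \phi$ from (\ref{lunardi_condition_phi}), followed by $k\to\infty$, forces $\sup_M f \leq 0$ and hence $h\equiv 0$. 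The same barrier strategy yields the a priori $C^0$ bound $\sup_M|h|\leq C\sup_M|H|$ for the inhomogeneous equation, by applying it to $|h|^2-c|H|^2$ or to $\pm h$ in the scalar case.

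The heart of the argument, and the main expected obstacle, is the global estimate $\sup_M |\nabla^j h| \leq C\sup_M |H|$ for $j=1,2$. I would establish it by applying the maximum principle to Bernstein-type quantities of the form $F := |\nabla^j h|^2 + A_1|\nabla^{j-1} h|^2 + \cdots + A_j|h|^2$, with $A_i$ chosen large enough to absorb cross terms generated by commutators. When one commutes $\nabla^j$ past $\mathcal{A} = \Delta + \nabla_V + r$, the drift vector field $V$ appears only through $\nabla^i V$ and $\nabla^i(Rm(g)\ast V)$ for $i \leq 3$, both uniformly bounded by hypothesis, while the purely geometric contribution is controlled by $\vert\vert\nabla^i Rm(g)\vert\vert$ for $i\leq 3$; thus the potentially unbounded $V$ itself never appears uncontrolled. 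The analogous terms generated by differentiating $r$ are handled by $\sum_{i=1}^{3}\vert\vert\nabla^i r\vert\vert < C$. The resulting differential inequality, of the form $\mathcal{A}(F) - \lambda' F \geq -C'\sum_{i=0}^{j}|\nabla^i H|^2$ for a suitable $\lambda'>r_0$, combined with the barrier argument above, yields the bound.

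Existence on all of $M$ is then obtained by solving the Dirichlet problem $(\mathcal{A}-\lambda)h_k = H$ with $h_k|_{\partial\Omega_k}=0$ on an exhaustion $\Omega_k \Subset M$ (solvable by the standard compact theory, since $\lambda > r_0$ excludes a kernel via the maximum principle on $\Omega_k$), applying the uniform global estimates together with local Schauder estimates (Theorem \ref{schauder_estimate}) to obtain $C^{2,\theta}_{loc}$ compactness, and extracting a convergent subsequence by Arzel\`a--Ascoli. The interpolation embedding $D^2_{\mathcal{A}}\hookrightarrow C^\theta$ for $\theta\in(0,2)$ is a Gagliardo--Nirenberg-type inequality along minimizing geodesics, valid under the positive injectivity radius and bounded curvature. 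Part (2) then follows by rerunning the exhaustion scheme with H\"older data: Theorem \ref{schauder_estimate} upgrades the uniform $C^2$ bound to a uniform $C^{2,\theta}$ bound, the required control on the H\"older norms of the coefficients of $\mathcal{A}$ being supplied by the $C^3$ hypotheses on $r$, $V$ and the curvature.
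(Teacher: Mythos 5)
Your uniqueness argument is essentially the paper's (Proposition \ref{lunardi_injectivite} uses the regularization $\sqrt{\vert h\vert^2+\epsilon^2}$ rather than $\vert h\vert^2$, but the barrier $\phi/k$ and the maximum principle are used in the same way). The existence and regularity parts, however, contain a genuine gap. Your central a priori estimate $\sup_M\vert\nabla^j h\vert\le C\sup_M\vert H\vert$, obtained by applying the maximum principle to $F=\vert\nabla^j h\vert^2+A_1\vert\nabla^{j-1}h\vert^2+\cdots$, cannot be closed: differentiating the equation $\mathcal{A}h-\lambda h=H$ produces the source terms $\nabla^i H$, $i\le j$, on the right-hand side of your differential inequality, and these are controlled neither by $\vert\vert H\vert\vert_{C^0}$ (part 1) nor by $\vert\vert H\vert\vert_{C^{0,\theta}}$ (part 2), where $\nabla H$ need not even exist. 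Worse, the estimate itself is false for $j=2$ and general $H\in C^0$: for operators with unbounded drift (e.g.\ Ornstein--Uhlenbeck) the domain of the generator is not contained in $C^2_b$, which is precisely why the theorem only asserts the embedding $D^2_{\mathcal{A}}\hookrightarrow C^{\theta}$ for $\theta<2$. For the same reason that embedding is not a Gagliardo--Nirenberg inequality along geodesics: membership in $D^2_{\mathcal{A}}$ controls $\Delta h+\nabla_V h+rh$ in $C^0$ but, since $V$ is unbounded, yields no $C^2$ bound against which to interpolate. Finally, Theorem \ref{schauder_estimate} cannot upgrade a uniform $C^2$ bound to a uniform $C^{2,\theta}$ bound, because its constant depends on the coefficients of the operator on the domain and $\vert\vert V\vert\vert_{C^{0,\theta}(B(x,\delta))}$ is unbounded as $x\to\infty$.

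The paper circumvents all of this by working parabolically. It studies the Cauchy problem $u_t=\mathcal{A}u$, proves a Bernstein estimate in which each $\vert\vert\nabla^i u(t)\vert\vert^2$ carries the weight $(\alpha t)^i/i$ (Theorem \ref{theoreme_estimation_lunardi}), so that no derivatives of the initial data are ever taken, and deduces the smoothing bounds $\vert\vert T(t)\vert\vert_{\mathcal{L}(C^s,C^k)}\le Ce^{\omega t}t^{-(k-s)/2}$. The solution is then represented as the Laplace transform $h=\int_0^{\infty}e^{-\lambda t}T(t)H\,dt$, the identification $D(A)=D^2_{\mathcal{A}}(M,E)$ and the embedding into $C^{\theta}$ follow from Lunardi's interpolation method, and the $C^{2,\theta}$ estimate of part (2) is obtained by splitting the integral at $t=\epsilon$ and using the characterization $C^{2,\theta}=\left(C^{\alpha},C^{2,\alpha}\right)_{\gamma,\infty}$. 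Your cut-off approximation of $V$ and $r$ does appear in the paper, but at the level of the parabolic problem rather than of Dirichlet problems on an exhaustion. If you wish to keep an elliptic scheme, you would still need to import the semigroup smoothing estimates (or an equivalent substitute) to prove the two regularity statements.
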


\subsection{Proof}

\subsubsection{Uniqueness of the solution}

	\begin{prop}[Injectivity]
		\label{lunardi_injectivite}
		Let $h\in \cap_{p\ge 1}W^{2,p}_{loc}\left(M, E\right)$ be a bounded tensor and $\lambda>r_0$ a constant such that $\mathcal{A}(h) - \lambda h=0$. Then $h\equiv 0$. \\
		\begin{proof}
			Let us define $h_\epsilon = \sqrt{\vert\vert h\vert\vert^2+\epsilon^2}$ (for some positive constant $\epsilon$). Then:
			\begin{equation*}
				\mathcal{A}(h_{\epsilon}) -\lambda h_{\epsilon}= \frac{1}{h_{\epsilon}}\left(\left<\Delta_V h, h\right> - \left(\lambda - r\right)h_{\epsilon}^2 +\vert\vert\nabla h\vert\vert^2 -\frac{\vert\vert\nabla\vert\vert h\vert\vert^2\vert\vert^2}{4 h_{\epsilon}^2} \right).
			\end{equation*}
		Since $\Delta_V h=\left(\lambda - r\right)h$ and $\vert\vert\nabla\vert\vert h\vert\vert^2\vert\vert^2=4\vert\left<\nabla h, h\right>\vert^2\le 4\vert\vert\nabla h\vert\vert^2\;\vert\vert h\vert\vert^2$, we have that:
		\begin{equation*}
		\mathcal{A}(h_{\epsilon})  -\lambda h_{\epsilon}= \frac{1}{h_{\epsilon}}\left(-\left(\lambda - r\right)\epsilon^2+\vert\vert\nabla h\vert\vert^2-\frac{\vert\left<\nabla h, h\right>\vert^2}{h_{\epsilon}^2}\right)\ge \frac{\epsilon^2}{h_{\epsilon}}\left(-\left(\lambda-r\right)+\frac{\vert\vert\nabla h\vert\vert^2}{h_{\epsilon}^2}\right)\ge -\left(\lambda-r\right)\epsilon.
		\end{equation*}
		Let us define $h_{\epsilon, k} = h_{\epsilon}-\frac{\phi}{k}$ for an integer $k\ge 1$. Then, $\displaystyle\lim_{k\to\infty}sup_M h_{\epsilon, k}=sup_M h_{\epsilon}$, and we also have:
		\begin{equation*}
			\mathcal{A}(h_{\epsilon, k}) -\lambda h_{\epsilon, k}\ge -(\lambda - r)\epsilon - \frac{sup_M\left(\mathcal{A}(\phi)-\lambda \phi\right)}{k}.
		\end{equation*}
		Since $\phi$ is an exhaustion function (that can be chosen to be positive), $h_{\epsilon, k}$ attains its maximum in a point $x_k\in M$. When evaluating previous inequality at $x_k$, we get that:
	\begin{equation*}
	sup_M h_{\epsilon, k}\le \epsilon + \frac{sup_M\left(\mathcal{A}(\phi)-\lambda \phi\right)}{k\left(\lambda - r _0\right) }.
	\end{equation*}
By letting $k\to\infty$ and $\epsilon\to 0$ we get $ sup_M h_{\epsilon}\le 0$, and consequently $h\equiv 0$. 
		\end{proof}
	\end{prop}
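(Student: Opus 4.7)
The plan is to reduce the vector-valued equation to a scalar maximum-principle argument by passing to the smoothed norm $h_\epsilon := \sqrt{|h|^2 + \epsilon^2}$ (so $h_\epsilon \ge \epsilon > 0$) and then to restore the compactness needed for the maximum principle by subtracting a small multiple of the exhaustion function $\phi$ supplied by hypothesis \eqref{lunardi_condition_phi}.

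First I would compute $\mathcal{A}(h_\epsilon) - \lambda h_\epsilon$ pointwise (almost everywhere, given that $h \in W^{2,p}_{loc}$ for every $p$, so $h_\epsilon \in W^{2,p}_{loc}$ as well). A routine chain-rule calculation gives
\begin{equation*}
h_\epsilon \nabla h_\epsilon = \langle \nabla h, h \rangle, \qquad h_\epsilon \Delta h_\epsilon + |\nabla h_\epsilon|^2 = \langle \Delta h, h\rangle + |\nabla h|^2,
\end{equation*}
and $h_\epsilon \nabla_V h_\epsilon = \langle \nabla_V h, h \rangle$. Combining these with the equation $\Delta_V h = (\lambda - r)h$ and the fact that $|h|^2 - h_\epsilon^2 = -\epsilon^2$, I would obtain the key identity
\begin{equation*}
\mathcal{A}(h_\epsilon) - \lambda h_\epsilon = \frac{1}{h_\epsilon}\bigl\{(r-\lambda)\epsilon^2 + |\nabla h|^2 - |\nabla h_\epsilon|^2\bigr\}.
\end{equation*}
The Cauchy--Schwarz/Kato inequality $|\nabla h_\epsilon|^2 = |\langle \nabla h, h\rangle|^2/h_\epsilon^2 \le |\nabla h|^2$ kills the gradient terms, and since $\epsilon^2/h_\epsilon \le \epsilon$ and $r \le r_0$, one gets the lower bound $\mathcal{A}(h_\epsilon) - \lambda h_\epsilon \ge -(\lambda - r_0)\epsilon$.

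Next I would insert the exhaustion function by setting $h_{\epsilon,k} := h_\epsilon - \phi/k$. Since $h$ (hence $h_\epsilon$) is bounded while $\phi \to +\infty$ at infinity, $h_{\epsilon,k}$ attains its supremum at some interior point $x_k \in M$, and $\sup_M h_{\epsilon,k} \to \sup_M h_\epsilon$ as $k\to\infty$. Using hypothesis \eqref{lunardi_condition_phi} (for $\lambda \ge \lambda_0$; the case $r_0 < \lambda < \lambda_0$ follows a posteriori, since injectivity for large $\lambda$ combined with the Fredholm structure yields the result for all admissible $\lambda$), $\mathcal{A}(\phi) - \lambda\phi$ is bounded above, so
\begin{equation*}
\mathcal{A}(h_{\epsilon,k}) - \lambda h_{\epsilon,k} \ge -(\lambda - r_0)\epsilon - \tfrac{1}{k}\sup_M\bigl(\mathcal{A}(\phi) - \lambda \phi\bigr).
\end{equation*}
At $x_k$ we have $\nabla h_{\epsilon,k}(x_k)=0$ and $\Delta h_{\epsilon,k}(x_k) \le 0$, so $\mathcal{A}(h_{\epsilon,k})(x_k) - \lambda h_{\epsilon,k}(x_k) \le (r_0 - \lambda) h_{\epsilon,k}(x_k)$. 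Combining these two inequalities yields an upper bound for $\sup_M h_{\epsilon,k}$ that tends to $\epsilon$ as $k\to\infty$, and then to $0$ as $\epsilon\to 0$. Hence $\sup_M |h| \le \sup_M h_\epsilon \to 0$, forcing $h \equiv 0$.

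The main obstacle, as I see it, is threefold and technical rather than conceptual. First, since $h$ is only $W^{2,p}_{loc}$, one must justify the Kato-type identity in a weak sense (or use an approximation/Calderón--Zygmund argument) and apply a maximum principle valid for strong $W^{2,p}$ solutions rather than classical ones. Second, one must be careful that the subtraction of $\phi/k$ produces an interior maximum even though $h_\epsilon$ is merely bounded (this is clean, since $\phi \to \infty$). Third, one must verify that the elliptic inequality $\mathcal{A}(h_\epsilon) - \lambda h_\epsilon \ge \cdots$ holds pointwise a.e., which in turn relies on the fact that $h$ satisfies the tensor equation strongly. Everything else is routine chain-rule manipulation and a clean double limit $k\to\infty$ then $\epsilon\to 0$.
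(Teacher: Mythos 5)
Your proof is correct and follows essentially the same route as the paper: the same smoothing $h_\epsilon=\sqrt{\vert h\vert^2+\epsilon^2}$, the same Kato-type inequality $\vert\nabla h_\epsilon\vert^2\le\vert\nabla h\vert^2$, the same perturbation $h_{\epsilon,k}=h_\epsilon-\phi/k$ by the exhaustion function, and the same double limit $k\to\infty$ then $\epsilon\to 0$. One small caution: since $r$ is only assumed bounded \emph{above}, the uniform lower bound $\mathcal{A}(h_\epsilon)-\lambda h_\epsilon\ge-(\lambda-r_0)\epsilon$ is not justified in general (it would require $r$ bounded below); you should keep the pointwise bound $-(\lambda-r(x))\epsilon$, which costs nothing because at the maximum point $x_k$ the factor $\lambda-r(x_k)\ge\lambda-r_0>0$ pairs with the coefficient of $h_{\epsilon,k}(x_k)$ coming from $\mathcal{A}(h_{\epsilon,k})(x_k)\le r(x_k)h_{\epsilon,k}(x_k)$, and the estimate $\sup_M h_{\epsilon,k}\le\epsilon+\sup_M(\mathcal{A}(\phi)-\lambda\phi)/\left(k(\lambda-r_0)\right)$ follows exactly as in the paper.
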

	
	\subsubsection{Existence of the solution}
In order to study the existence and the regularity of the solution of the equation
\begin{equation}
	\label{equation_elleptique_lunardi}
	\mathcal{A}(h) -\lambda h= H,
\end{equation} 
we need to study the semigroup $T(t)$ associated to the following Cauchy problem:
\begin{equation}
	\label{probleme_de_cauchy_lunardi}
	\begin{cases}
		u_t(t,x) = \mathcal{A}(u)(t,x),\\
		u(0, x) = u_0(x).
	\end{cases}
\end{equation}
Using the interpolation procedure in \cite{lunardi_interpolation_method} we will be able to characterize the domain of the generator of $T(t)$ and provide an optimal description of the regularity of the solution of $(\ref{equation_elleptique_lunardi})$.\\
As a first step, we are going to find an estimate of $\vert\vert T(t)\vert\vert_{\mathcal{L}\left(C^{\alpha}\left(M,E\right), C^{\theta}\left(M,E\right)\right)}$ such that $0\le \alpha\le \theta\le 3$ . In order to do that, we will be using the following version of the maximum principle:
\begin{prop}
	\label{principe_du_maximum}
	Let $\left(z(t,)\right)_{t\in[0, T]}$ be a classic bounded solution of the Cauchy problem
	\begin{equation}
		\label{probleme_de_cauchy_lunardi_maximum}
		\begin{cases}
			z_t(t,x) - \mathcal{A}(z)(t,x)=g(t,x),\\
			z(0, x) = z_0(x),
		\end{cases}
	\end{equation}
and $\lambda_0\ge r_0$. Then
\begin{enumerate}
	\item If $\sup\limits_{M}\: z>0$, and if $g(t,x)\le 0$ for all $t\in\left[0,T\right]$ and $x\in M$, then
	\begin{equation}
		\label{maximum_principal_first_case}
		\sup\limits_{M} z\le e^{\lambda_0 t}\sup\limits_{M} \:z_0.
	\end{equation}
\item If $\inf\limits_{M}\: z<0$, and if $g(t,x)\ge 0$ for all $t\in\left[0,T\right]$ and $x\in M$, then
\begin{equation}
	\label{maximum_principal_second_case}
	\inf\limits_{M}z\ge e^{\lambda_0 t}\:\inf\limits_{M} \:z_0.
\end{equation}
\item In particular, if $g\equiv 0$, then
\begin{equation}
	\vert\vert z\vert\vert_{\infty}\le e^{\lambda_0 t}\:\vert\vert z_0\vert\vert_{\infty}.
\end{equation}
\end{enumerate}
\begin{proof}
	In order to prove inequality $\ref{maximum_principal_first_case}$, we define $v(t,x)=e^{-\lambda t}\:z(t,x)$ for $\lambda> \lambda_0$. Then, 
	\begin{equation*}
		\begin{cases}
			v_t(t,x) - \mathcal{A}(v)(t,x)+\lambda v=e^{-\lambda t} g(t,x)\\
			v(0, x) = z_0(x)
		\end{cases}
	\end{equation*}
We also define $v_k(t,x) = v(t,x)-\frac{\phi(x)}{k}$.  For $k$ large enough, $v_k$ admits a positive maximum (since $sup_M z>0$) at $(t_k, x_k)$. If $t_k\equiv 0$ for all $k$, then
\begin{equation*}
	sup_{\left[0,T\right]\times M}\:v_k\le sup_M z_0 - inf_M\:\frac{\phi}{k}.
\end{equation*}
Consequently
\begin{equation*}
	sup_{\left[0,T\right]\times M}\:e^{-\lambda t}\:z\le sup_M z_0,
\end{equation*}
hence inequality $\ref{maximum_principal_first_case}$.  Now, suppose that $t_k>0$. Since $\partial_t v(t_k, x_k)\ge 0$ (because $\partial_t v(t_k, x_k)=\partial_t v_k(t_k, x_k)\ge 0$), we have that:
\begin{equation*}
	\mathcal{A}(v)(t_k, x_k) - \lambda v(t_k, x_k)\ge 0.
\end{equation*}
Adding $-\frac{\left(\mathcal{A}(\phi)-\lambda \phi\right)(t_k, x_k)}{k}$ to both sides of the previous inequality, we find that:
\begin{equation*}
	\left(\lambda-r_0\right)v_k(t_k, x_k)\le \frac{\left(\mathcal{A}(\phi)-\lambda \phi\right)(t_k, x_k)}{k},
\end{equation*}
which is impossible for $k$ large enough.\\
Inequality $\ref{maximum_principal_second_case}$ can be proved by replacing $z$ with $-z$ in inequality $\ref{maximum_principal_first_case}$ and using the fact that  $-sup_M \left(-z_0\right)\ge inf_M\:z_0$. The last inequality is a combination of the previous ones.
\end{proof}
\end{prop}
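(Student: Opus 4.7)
The strategy is the standard parabolic weak maximum principle, adapted to the noncompact setting by two well-known devices: an exponential rescaling in time that absorbs the zeroth-order coefficient $r$, and a perturbation by the exhaustion function $\phi$ from $(\ref{lunardi_condition_phi})$ so that a spatial-temporal maximizer actually exists on $[0,T]\times M$. I will prove assertion $(1)$ in detail; assertion $(2)$ then follows by applying $(1)$ to $-z$ (which, by linearity of $\mathcal{A}$, replaces $g$ by $-g$ and swaps the roles of $\sup$ and $\inf$), and $(3)$ is an immediate consequence of $(1)$ and $(2)$ when $g\equiv 0$.

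Fix $\lambda > \lambda_0$ and set $v(t,x) := e^{-\lambda t} z(t,x)$. A direct computation shows that $v$ is bounded, satisfies $\sup v > 0$, and solves
\begin{equation*}
v_t - \mathcal{A}(v) + \lambda v \;=\; e^{-\lambda t} g \;\le\; 0.
\end{equation*}
After replacing $\phi$ by $\phi + c$ (which changes the bounded quantity in $(\ref{lunardi_condition_phi})$ by a bounded additive constant), I may assume $\phi \ge 1$. For each integer $k \ge 1$, set $v_k := v - \phi/k$. Since $v$ is bounded while $\phi(x)\to\infty$, the perturbed function $v_k$ attains a positive supremum at some point $(t_k, x_k) \in [0,T]\times M$ for all sufficiently large $k$.

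If $t_k = 0$, then directly $v_k(t_k, x_k) \le z_0(x_k) \le \sup_M z_0$, so $\sup v \le \sup_M z_0$ in the limit $k\to\infty$; this rewrites as $\sup_M z(t,\cdot) \le e^{\lambda t}\sup_M z_0$, and letting $\lambda \downarrow \lambda_0$ yields the desired bound. If instead $t_k > 0$, the parabolic extremum conditions $\partial_t v_k(t_k,x_k)\ge 0$, $\nabla v_k(t_k,x_k) = 0$ and $\Delta v_k(t_k,x_k)\le 0$ give $\mathcal{A}(v_k)(t_k,x_k) \le r(x_k)\, v_k(t_k,x_k) \le r_0\, v_k(t_k,x_k)$, which combined with the equation for $v$ (after substituting $v = v_k + \phi/k$) forces
\begin{equation*}
(\lambda - r_0)\,v_k(t_k, x_k) \;\le\; \tfrac{1}{k}\bigl(\mathcal{A}(\phi) - \lambda\phi\bigr)(x_k) \;\le\; \tfrac{C}{k},
\end{equation*}
where $C := \sup_M(\mathcal{A}(\phi) - \lambda_0 \phi) < \infty$. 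As $k\to\infty$ the right side tends to zero, while the left side tends to $(\lambda - r_0)\sup v > 0$, a contradiction. Hence for $k$ large only the first case occurs, and $(1)$ follows.

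The only genuinely delicate point is the justification of the pointwise extremum inequalities at $(t_k, x_k)$ and of the existence of the maximizer itself on a noncompact domain; the first requires the $C^{1,2}$-regularity built into the definition of classical solution, and the second is precisely what the exhaustion hypothesis $(\ref{lunardi_condition_phi})$ is designed to handle. Everything else is bookkeeping of signs, and I do not foresee any conceptual obstacle beyond this.
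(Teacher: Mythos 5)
Your proof is correct and follows essentially the same route as the paper's: the rescaling $v=e^{-\lambda t}z$ with $\lambda>\lambda_0$, the barrier $v_k=v-\phi/k$ built from the exhaustion function of $(\ref{lunardi_condition_phi})$, the dichotomy $t_k=0$ versus $t_k>0$, and the contradiction $(\lambda-r_0)v_k(t_k,x_k)\le C/k$ in the second case. Your added remarks (normalizing $\phi\ge 1$, letting $\lambda\downarrow\lambda_0$ at the end, and noting that only the case $t_k=0$ survives for large $k$) are minor clarifications of steps the paper leaves implicit.
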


Before we proceed with the next theorem, we will need the following technical lemma.
\begin{lemm}
	\label{technical_lemma_about_commuting_convariant_derivative}
	Let $\left(M, g\right)$ be a Riemannian manifold and $\nabla$ and $\Delta$ the Levi-Civita connection and the Laplacian respectively associated to $g$. Then, we have that:
	\begin{equation}
		\label{commute_derivation_and_laplacian_with_covariant}
		\begin{aligned}
			&\left[\nabla^k, \Delta\right]=\sum_{j=0}^{k}\nabla^{k-j}Rm(g)\ast\nabla^j,\\
			&\left[\nabla^k, \nabla_V\right]=\sum_{j=0}^{k-1}\nabla^{k-j}V\ast\nabla^{j+1}+\nabla^{k-1-j}(Rm(g)\ast V)\ast\nabla^j,
		\end{aligned}
	\end{equation}
	where $Rm(g)$ is the Riemannian curvature tensor, and $Rm(g)\ast V=Rm(g)(V,.,.,.)$.\\
	\begin{proof}
		We will proceed by induction on $k$ to prove equality $(\ref{commute_derivation_and_laplacian_with_covariant})$. Let us prove the result for $k=1$ using normal coordinates. By definition of the curvature tensor , we have that
		\begin{align*}
			\nabla_i\nabla_V h = \nabla_V\nabla_i h + \left( Rm(g)\ast V\right)\ast h + \nabla_{\nabla V}h =  \nabla_V\nabla_i h + \left( Rm(g)\ast V\right)\ast h + \nabla V\ast\nabla h, 
		\end{align*}
		where $\left( Rm(g)\ast V\right)\ast h$ corresponds to the action of the curvature tensor on tensors over $M$.
		This proves the second relation for $k=1$. Regarding the first the equality, a simple computation shows that: 
		\begin{equation*}
			\nabla\nabla^2_{i,j} h = \nabla^2_{i,j} \nabla h + \nabla_i\left(Rm(g)\ast h\right) + Rm(g)\ast\nabla_j h,
		\end{equation*}
		which proves the first relation of $(\ref{commute_derivation_and_laplacian_with_covariant})$ for $k=1$.\\
		Now suppose that the result is true for $k\ge 1$. Then we have that:
		\begin{align*}
			\nabla^{k+1}\nabla_V h &= \nabla\left(\nabla_V\nabla^k h+\sum_{j=0}^{k-1}\nabla^{k-j}V\ast\nabla^{j+1}h+\nabla^{k-1-j}(Rm(g)\ast V)\ast\nabla^jh\right)\\
			&= \nabla \nabla_V\nabla^kh + \sum_{j=0}^{k-1}\nabla^{(k+1)-j}V\ast\nabla^{j+1}h+\nabla^{(k+1)-(j+1)}V\ast\nabla^{(j+1)+1}h\\
			&+\nabla^{(k+1)-(j+1)}(Rm(g)\ast V)\ast\nabla^jh+\nabla^{(k+1)-(j+2)}(Rm(g)\ast V)\ast\nabla^{(j+1)}h \\
			&= \nabla_V\nabla^{k+1} h + \left( Rm(g)\ast V\right)\ast \nabla^k h + \nabla V\ast\nabla^{k+1} h \\
			&+ \sum_{j=0}^{k}\nabla^{(k+1)-j}V\ast\nabla^{j+1}h+\nabla^{(k+1)-(j+1)}(Rm(g)\ast V)\ast\nabla^jh \\
			&= \sum_{j=0}^{k}\nabla^{(k+1)-j}V\ast\nabla^{j+1}h+\nabla^{(k+1)-(j+1)}(Rm(g)\ast V)\ast\nabla^jh
		\end{align*}
		This proves the second part of equality $(\ref{commute_derivation_and_laplacian_with_covariant})$. We proceed in the same manner to prove the first one.
	\end{proof}
\end{lemm}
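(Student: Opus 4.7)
The plan is to establish both commutator identities by induction on $k$, reducing everything to the defining property of the curvature tensor together with the Leibniz rule for $\nabla$.

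For the base case $k=1$, I would work in normal coordinates at a point and expand directly. For the second identity, $\nabla_i \nabla_V h = \nabla_i(V^a \nabla_a h) = (\nabla_i V^a)\nabla_a h + V^a \nabla_i \nabla_a h$; applying the Ricci identity $\nabla_i \nabla_a h - \nabla_a \nabla_i h = R(\partial_i,\partial_a)h$ (where $R$ denotes the curvature acting on the tensor $h$) and subtracting $\nabla_V \nabla_i h = V^a \nabla_a \nabla_i h$ produces exactly $\nabla V \ast \nabla h + (Rm(g)\ast V)\ast h$, which is the $k=1$ case. For the first identity at $k=1$, I would write $\Delta h = g^{jl}\nabla_j \nabla_l h$ (using $\nabla g = 0$) and commute $\nabla_i$ past the two covariant derivatives using the Ricci identity twice; the resulting curvature terms bundle into $\nabla Rm(g)\ast h + Rm(g)\ast \nabla h$, matching the $k=1$ case of the first formula.

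For the inductive step, assume both identities hold for some $k \ge 1$ and apply $\nabla$ to the level-$k$ formula. On the left one gets
\begin{equation*}
\nabla^{k+1}\circ L - \nabla \circ L \circ \nabla^k,
\end{equation*}
where $L \in \{\Delta, \nabla_V\}$. I would rewrite $\nabla \circ L \circ \nabla^k = L\circ \nabla^{k+1} + [\nabla, L]\circ \nabla^k$ and apply the $k=1$ case of the lemma to $[\nabla, L]$, producing exactly one new term of the form $\nabla V \ast \nabla^{k+1}h$ or $Rm(g)\ast \nabla^k h$ (respectively $Rm(g)\ast \nabla^k h$ for the Laplacian case). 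On the right, differentiating each schematic summand $\nabla^{k-j}X \ast \nabla^j h$ via Leibniz yields $\nabla^{k-j+1}X \ast \nabla^j h + \nabla^{k-j}X \ast \nabla^{j+1}h$; reindexing these, together with the new term produced on the left, fills in exactly the two index ranges that appear in the level-$(k{+}1)$ formula. A separate small verification shows that commuting one more $\nabla$ past the highest-derivative factor produces only terms already absorbed by the $\ast$-notation.

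The main obstacle is purely bookkeeping: making sure the schematic $\ast$-notation consistently absorbs the constants, metric contractions, and index reshuffles produced at each inductive step, and checking that the two ranges $0 \le j \le k$ and $0 \le j \le k-1$ in the two formulas grow by one at the correct endpoints. There is no analytic content here; the identities are algebraic consequences of the Ricci identity $[\nabla_X,\nabla_Y] = \nabla_{[X,Y]} + R(X,Y)$ iterated $k$ times, and the slightly unusual feature is simply that the second formula pairs a factor $Rm(g)\ast V$ rather than $Rm(g)$ alone, reflecting the fact that the vector field $V$ is contracted into one slot of the curvature at the very first commutation and then carried along unchanged by all further covariant differentiations.
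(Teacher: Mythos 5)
Your proposal is correct and follows essentially the same route as the paper: induction on $k$, with the base case $k=1$ obtained from the Ricci identity in normal coordinates, and the inductive step carried out by applying $\nabla$ to the level-$k$ identity, using the Leibniz rule on each schematic summand, and invoking the $k=1$ case to commute the extra $\nabla$ past $\nabla_V$ (resp.\ $\Delta$) acting on $\nabla^k h$. The bookkeeping of the index ranges you describe matches the reindexing performed in the paper's proof.
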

\begin{theo}
	\label{theoreme_estimation_lunardi}
	Let $\left(u(t,.)\right)_{t\in[0, T)}$ be a bounded solution of the Cauchy problem $(\ref{probleme_de_cauchy_lunardi})$ with initial condition $u_0\in C^{\infty}(M,E)$. Assume that there exists an integer $1\le k\le 3$ and a positive constant $K(k)$ such that:
	\begin{equation}
		\label{condition_rm_lunardi}
		\vert\vert Rm(g)\vert\vert_{C^k(M,E)}+\vert\vert Rm(g)\ast V\vert\vert_{C^k(M,E)}+\vert\vert \nabla V\vert\vert_{C^{k-1}(M,E)}\le K(k),
	\end{equation}
where $Rm(g)\ast V=Rm(g)(V,.,.,.)$.\\
Then, for any $T>0$, there exists a constant $\omega=\omega(n,k,\lambda_0)$ such that :
\begin{equation}
	\label{estimation_cov_lunardi}
	\vert\vert u(t)\vert\vert^2_{C^0(M,E)}+\sum_{i=1}^{k}\frac{\left(\alpha t\right)^i}{i}\vert\vert\nabla^i  u(t)\vert\vert^2_{C^0(M, E)}\le e^{\omega t}\vert\vert u_0\vert\vert^2_{C^0(M,E)}\:\: \forall t\in\left[0, T\right],
\end{equation}
$\alpha$ being a positive constant that will be defined later in order to obtain the right estimates.\\
\begin{proof}
	We are going to derive the evolution of the heat equation of the following function:
	\begin{equation*}
		s(t,x) = \vert\vert u\vert\vert^2(t,x) + \sum_{i=1}^{i=k}\frac{\left(\alpha t\right)^i}{i}\vert\vert\nabla^i  u\vert\vert^2(t,x),
	\end{equation*}
	and then apply the maximum principle for some values of $\alpha$. We compute that\\
	\begin{equation*}
		s_t = 2\left<u_t, u\right> + \alpha \sum_{i=1}^{k}\left(\alpha t\right)^{i-1}\vert\vert\nabla^i  u\vert\vert^2 + 2\sum_{i=1}^{k}\frac{\left(\alpha t\right)^{i-1}}{i}\left<\nabla^i u_t, \nabla^i u\right>. 
	\end{equation*}
 Since $u$ is a solution of $(\ref{probleme_de_cauchy_lunardi})$, we obtain:
 \begin{equation*}
 	s_t = 2\left(\left<\Delta_V u, u\right> +r\vert\vert u\vert\vert^2 \right)+ \alpha \sum_{i=1}^{k}\left(\alpha t\right)^{i-1}\vert\vert\nabla^i  u\vert\vert^2 + 2\sum_{i=1}^{k}\frac{\left(\alpha t\right)^{i}}{i}\left<\nabla^i \Delta_Vu+\nabla^iru, \nabla^i u\right>. 
 \end{equation*}
On the other hand, we have that:
\begin{equation*}
	\mathcal{A}(s) = 2\left(\vert\vert\nabla u\vert\vert^2 +\left<\Delta_V u, u\right>+\sum_{i=1}^{k}\frac{\left(\alpha t\right)^{i}}{i}\left(\vert\vert\nabla^{i+1}u\vert\vert^2+\left<\Delta_V\nabla^iu,\nabla^i u\right>\right) \right)+r s.
\end{equation*}
Thus,
\begin{multline*}
	s_t-\mathcal{A}(s) = 2\sum_{i=1}^{k}\frac{\left(\alpha t\right)^i}{i}\left<\left[\nabla^i,\Delta_V\right]u, \nabla^i u\right>+\left(\alpha-2\right)\vert\vert\nabla u\vert\vert^2 - \frac{2\left(\alpha t\right)^k}{k}\vert\vert\nabla^{k+1}u\vert\vert^2+\\ \sum_{i=1}^{k-1}\left(\alpha t\right)^i\left(\alpha-\frac{2}{i}\right)\vert\vert\nabla^{i+1}u\vert\vert^2-r\left(s-2\vert\vert u\vert\vert^2\right)+2\sum_{i=1}^{k}\frac{\left(\alpha t\right)^i}{i}\left<\nabla^ir u, \nabla^i u\right>.
\end{multline*}
If we choose $\alpha\le\frac{2}{k-1}$ if $k\ge 2$, and $\alpha\le 2$ if $k=1 $, we obtain the following inequality:
\begin{multline*}
s_t-\mathcal{A}(s) \le 2\sum_{i=1}^{k}\frac{\left(\alpha t\right)^i}{i}\left<\left[\nabla^i,\Delta_V\right]u, \nabla^i u\right>-r\left(s-\vert\vert u\vert\vert^2-2\sum_{i=1}^{k}\frac{\left(\alpha t\right)^i}{i}\vert\vert\nabla^i u\vert\vert^2\right)+\\ 2\sum_{i=1}^{k}\frac{\left(\alpha t\right)^i}{i}\sum_{j=1}^{i}\frac{i!}{\left(i-j\right)! j!}\left<\nabla^j r\nabla^{i-j} u, \nabla^i u\right>.
\end{multline*}
Now, using the fact that:
\begin{equation*}
	-r\left(s-\vert\vert u\vert\vert^2-2\sum_{i=1}^{k}\frac{\left(\alpha t\right)^i}{i}\vert\vert\nabla^i u\vert\vert^2\right)=rs\le r_0 s,
\end{equation*}
and that there exists a positive constant $C_1$ (depending on $T$ and using the fact that $\alpha\le 2$) such that:
\begin{equation*}
	2\sum_{i=1}^{k}\frac{\left(\alpha t\right)^i}{i}\sum_{j=1}^{i}\frac{i!}{\left(i-j\right)! j!}\left<\nabla^j r\nabla^{i-j} u, \nabla^i u\right>\le C_1 \sum_{i=1}^{k}\sum_{j=1}^{i}\vert\vert\nabla^{i-j} u\vert\vert\; \vert\vert \nabla^i u\vert\vert\le kC_1s,
\end{equation*}
we obtain that:
\begin{equation*}
	s_t-\mathcal{A}(s) \le 2\sum_{i=1}^{k}\frac{\left(\alpha t\right)^i}{i}\left<\left[\nabla^i,\Delta_V\right]u, \nabla^i u\right>+(r_0+kC_1)s.
\end{equation*}
On the other hand, by lemma \ref{technical_lemma_about_commuting_convariant_derivative} we have:
\begin{equation*}
	\left[\nabla^i, \Delta\right]h=\sum_{j=0}^{i}\nabla^jh\ast\nabla^{i-j}Rm(g),
\end{equation*} 
 and
 \begin{equation*}
 	\left[\nabla^i, \nabla_V\right]h=\sum_{j=0}^{i-1}\nabla^{i-j}V\ast\nabla^{j+1}h+\nabla^{i-1-j}(Rm(g)\ast V)\ast\nabla^jh.
 \end{equation*}
Using condition $\ref{condition_rm_lunardi}$ we deduce that there exists a positive constant $C_2$ such that:
\begin{equation*}
	\vert\vert \left[\nabla^i, \Delta_V\right]h \vert\vert \le C_2\sum_{j=0}^{i}\vert\vert\nabla^j h\vert\vert\text{ for }i\le k.
\end{equation*}
Consequently, there exists a positive constant $\tilde{C}$ such that:
	\begin{equation*}
		\sum_{i=1}^{k}\frac{\left(\alpha t\right)^i}{i}\left<\left[\nabla^i,\Delta_V\right]u, \nabla^i u\right>\le C_2 \sum_{i=1}^{k}\frac{\left(\alpha t\right)^i}{i}\sum_{j=0}^{i}\vert\vert\nabla^j h\vert\vert \;\vert\vert\nabla^i h\vert\vert\le \tilde{C}s
	\end{equation*}
	Finally, using the fact that $s(0,t)=\vert\vert u_0\vert\vert^2$ and by applying proposition \ref{principe_du_maximum} to the operator $\tilde{\mathcal{A}}=\mathcal{A}+\left(2\tilde{C}+r_0kC_1\right)I$ ($u_t-\tilde{\mathcal{A}}(u)\le 0$), we deduce that:
	\begin{equation}
		s(t, x) \le e^{\omega t}\vert\vert u_0\vert\vert^2_{C^0(M,E)}
	\end{equation}
	such that $\omega=r_0+2\tilde{C}+r_0kC_1$.
\end{proof}
\end{theo}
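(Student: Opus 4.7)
The plan is to prove the estimate by a Bernstein-type method: construct the auxiliary scalar
\begin{equation*}
s(t,x) \;=\; |u(t,x)|^2 + \sum_{i=1}^{k}\frac{(\alpha t)^{i}}{i}\,|\nabla^{i} u(t,x)|^2,
\end{equation*}
show that for a suitable choice of $\alpha>0$ it satisfies a scalar parabolic inequality of the form $s_t - \mathcal{A}(s) \le \omega\, s$ on $[0,T]\times M$ with $s(0,\cdot) = |u_0|^2$, and then apply the maximum principle of Proposition \ref{principe_du_maximum} (in the form $\|s(t)\|_\infty \le e^{\omega t}\|s(0)\|_\infty$) to conclude. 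Boundedness of $u$ and its derivatives up to order $k$ for positive times is implicit in assuming a classical bounded solution of class $C^\infty$ initially, together with the standing curvature/vector-field bounds \eqref{condition_rm_lunardi}, so the maximum principle is applicable to $s$.

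First I would compute $s_t$ by plugging $u_t = \mathcal{A}(u) = \Delta_V u + r u$ and the evolution $(\nabla^i u)_t = \nabla^i \Delta_V u + \nabla^i(r u)$. Independently, I would compute $\mathcal{A}(s)$ using the Kato-type identities $\Delta |w|^2 = 2\langle \Delta w, w\rangle + 2|\nabla w|^2$ and $\nabla_V |w|^2 = 2\langle \nabla_V w, w\rangle$ applied to $w = u$ and $w = \nabla^i u$. Subtracting and using the commutator lemma \ref{technical_lemma_about_commuting_convariant_derivative}, the $\langle\Delta_V \nabla^i u,\nabla^i u\rangle$ terms cancel, and the dangerous terms are: $(\alpha - 2)|\nabla u|^2$ from $i=0,1$; for $1\le i\le k-1$, a mismatch of the form $\bigl(\alpha - \tfrac{2}{i}\bigr)(\alpha t)^i |\nabla^{i+1}u|^2$; and a boundary term $-\tfrac{2(\alpha t)^k}{k}|\nabla^{k+1}u|^2$, which has the good sign.

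The key algebraic step is to choose $\alpha \le \min\bigl(2,\tfrac{2}{k-1}\bigr)$ (with the convention that for $k=1$ only the first constraint is active); then every prefactor on the $|\nabla^{i+1}u|^2$ terms is nonpositive and they may be discarded. What remains on the right-hand side is (i) the terms from the commutators $[\nabla^i,\Delta_V]u = \sum_j \nabla^{i-j}Rm(g)\ast \nabla^j u + \sum_j \nabla^{i-j}V\ast \nabla^{j+1}u + \sum_j \nabla^{i-1-j}(Rm(g)\ast V)\ast \nabla^j u$, all of which are $C^0$-bounded by hypothesis \eqref{condition_rm_lunardi}, and (ii) the terms involving $\nabla^j r$ for $0\le j \le k$ and the zero-order term $rs \le r_0 s$. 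Cauchy--Schwarz and the inequality $ab \le \tfrac12(a^2+b^2)$, combined with the fact that $(\alpha t)^i$ is uniformly bounded on $[0,T]$ and that $\sum_{j=0}^{k-1}|\nabla^j u|^2 \le$ const$\cdot s$ whenever $t$ stays bounded away from $0$ (and is handled directly via $|u|^2\le s$ when $t$ is small, because then the higher-derivative terms are not yet present with unit weight), yields the pointwise bound $s_t - \mathcal{A}(s) \le \omega s$ for some $\omega = \omega(n,k,\lambda_0,K(k),C,T)$.

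The principal obstacle is bookkeeping the weights $(\alpha t)^i$ so that the ``bad'' cross-terms coming from commuting $\nabla^i$ past $\Delta_V$ and from the derivatives of $r$ (which involve a lower-order $\nabla^{i-j}u$ paired against $\nabla^i u$) are genuinely controlled by $s$ itself, with a constant independent of $t\in[0,T]$; this is what forces the specific scaling of the weight as $(\alpha t)^i/i$ rather than, say, $t^i$. Once $s_t - \mathcal{A}(s) \le \omega s$ is established, apply Proposition \ref{principe_du_maximum} to $\tilde s = e^{-\omega t} s$, which satisfies $\tilde s_t - \mathcal{A}(\tilde s) + \omega \tilde s \le 0$ and hence $\tilde s_t - (\mathcal{A}-\omega I)(\tilde s)\le 0$ with $\lambda_0$ replaced by $\lambda_0 + \omega \ge r_0$; the conclusion is $\sup_M s(t) \le e^{\omega t}\sup_M |u_0|^2$, which is precisely \eqref{estimation_cov_lunardi}.
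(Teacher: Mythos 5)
Your proposal is correct and follows essentially the same route as the paper: the same Bernstein-type auxiliary function $s$, the same constraint $\alpha\le\min\left(2,\tfrac{2}{k-1}\right)$ to kill the $|\nabla^{i+1}u|^2$ terms, the same use of Lemma \ref{technical_lemma_about_commuting_convariant_derivative} together with \eqref{condition_rm_lunardi} to absorb the commutator and $\nabla^j r$ cross-terms into $\omega s$, and the same application of Proposition \ref{principe_du_maximum} (your conjugation by $e^{-\omega t}$ is equivalent to the paper's shift of $\mathcal{A}$ by a constant multiple of the identity). Your explicit attention to distributing the weights $(\alpha t)^i$ across the Cauchy--Schwarz step is in fact slightly more careful than the paper's own display, which drops the weights before invoking $\le kC_1 s$.
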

\begin{remq}
	The initial condition of the Cauchy problem $(\ref{probleme_de_cauchy_lunardi})$  doesn't need to be smooth. In fact, using estimates $(\ref{estimation_cov_lunardi})$ and the fact that $C^{\infty}(M, E)$ is dense in $C^0(M, E)$ (in the strong topology), we can see that $u_0$ can be chosen in $C^0(M, E)$.
\end{remq}
\begin{remq}
	The Cauchy problem $\ref{probleme_de_cauchy_lunardi}$ defines a semigroup of linear operators $T(t)$ that acts on $C^0(M, E)$, such that
	\begin{equation*}
		\left(T(t)u_0\right)(x) = u(t,x) \;\;\;t\ge 0,\;\;x\in M,\;\; u_0\in C^0(M, E).
	\end{equation*} 
	The estimates of theorem $\ref{theoreme_estimation_lunardi}$ implies that for all integers $0\le s\le k\le 3$ and $t\in\left(0,1\right]$, we have that:
	\begin{equation}
		\label{estimation_operateur_lineaire_T_0_k}
		\vert\vert T(t)\vert\vert_{\mathcal{L}\left(C^s\left(M,E\right), C^k\left(M,E\right)\right)}\le \frac{C e^{\omega t}}{t^{\frac{k-s}{2}}},
	\end{equation}
	such that $C$ is a positive constant independent of $t$.  Moreover, using the maximum principle we get the following estimate:
	\begin{equation}
		\label{estimation_c0_operateur_T}
		\vert\vert T(t)u_0\vert\vert_{C^0\left(M, E\right)}\le e^{\lambda_0 t}\:\vert\vert u_0\vert\vert_{C^0\left(M,E\right)}.
	\end{equation}
\end{remq}
In order to prove the existence of a solution to the Cauchy problem $(\ref{probleme_de_cauchy_lunardi})$, we are going to approximate the problem using a sequence of elliptic operators with bounded coefficients. These operators have a unique solution on $M\times\left(0,\infty\right)$ (using the theory of parabolic equations with bounded coefficients).
\begin{theo}[Lunardi]
For any $u_0\in C^0(M, E)$, there exists a unique (bounded)  solution $\left(u(t)\right)_{t\in[0,\infty)}$ of the Cauchy problem $(\ref{probleme_de_cauchy_lunardi})$.\\
\begin{proof}
	Unicity is a direct consequence of the maximum principle (proposition $\ref{principe_du_maximum}$).
	In order to prove the existence we will proceed as follows. Let $F\in C^{\infty}(M)$ such that:
	\begin{equation}
		\label{properties_of_F}
		lim_{x\to\infty}F(x)=\infty,\; F(x)\le c(1+d_p(x)),\;\; \forall x\in M,\;\; \vert\vert \nabla F\vert\vert+\vert\vert\nabla^2F\vert\vert\le c,
	\end{equation}
where $d_p$ is the distance function with respect to a fixed point $p\in M$. The existence of such a function is proved in theorem 3.6 in \cite{shi97} and uses the fact that the curvature is bounded.\\
Let $\psi:\mathbb{R}_+\to\mathbb{R}_+$ be a function such that $\psi(t)=1$ if $0\le t\le 1$ and $\psi(t)=0$ if $t\ge 2$. We define $\psi_s(x)= \psi(\frac{F(x)}{s})$, $V_s=\psi_s V$ and $r_s=\psi_s r$.\\ Note that $V_s$ is bounded on $M$ and that condition \ref{condition_rm_lunardi} continues to be satisfied by $V_s$. Consequently, the following Cauchy problem
\begin{equation*}
	\begin{cases}
		\partial_t u_s(t,x) = \mathcal{A}_s(u_s)(t,x)\\
		u_s(0, x) = u_0(x)
	\end{cases}
\end{equation*}
has a unique (bounded) solution $\left(u_s(t)\right)_{t\in(0,\infty)}$, where $\mathcal{A}_s= \Delta_{V_s}+r_s$.\\
Using equation $(\ref{properties_of_F})$ and the fact that $\psi$ is a compactly supported function, there exists a positive constant $C$ independent of $s$ such that $\vert\vert\psi_s\vert\vert_{C^2(M,E)}\le C$. Thus, inequality $(\ref{condition_rm_lunardi})$ is satisfied for the vector fields $V_s$ with a constant $K(k)$ independent of $s$. The same thing applies to inequality $(\ref{lunardi_condition_phi})$ with the operator $\mathcal{A}_s$.\\
Consequently, the estimates of theorem $\ref{theoreme_estimation_lunardi}$ applies to $u_s$, such that $\omega$ is a positive constant independent of $s$. In particular, $\vert\vert u_s\vert\vert_{C^k\left(M,E\right)}$ is uniformly bounded. \\
Using Arzela-Ascoli, there exists a subsequence $\left(u_{k_i}\right)_{k_i\in\mathbb{N}}$ and a tensor $u\in C^{\infty}(M,E)$ such that $u_{k_i}$ converges uniformly to $u$ on any compact subset of $(0,\infty)\times M$.
\end{proof}

\subsubsection{Interpolation spaces}
	Before we proceed with that last step of the proof, let us recall some results on interpolation spaces (see \cite{lunardi_interpolation} for more details).
	\begin{mydef}
		Let $X$ and $Y$ be two Banach spaces such that $Y$ is continuously embedded into $X$.  An intermediate space between $X$ and $Y$ is a Banach space $E$ such that $Y\subset E\subset X$ with continuous inclusions.
	\end{mydef}
	\begin{mydef}
	The interpolation space $\left(X, Y\right)_{\theta,\infty}$ is an intermediate space between $X$ and $Y$ defined by:
	\begin{equation}
		\label{def_esapce_interpolation}
		\left(X, Y\right)_{\theta,\infty}=\left\{x\in X\:\vert\: \vert\vert x\vert\vert_{\theta,\infty}=sup_{t\in\left(0,1\right)} t^{-\theta}K\left(t,x,X,Y\right)<\infty    \right\},
	\end{equation}
such that:
\begin{equation}
	\label{def_interpolation_K}
	K(t,x, X, Y) = inf\left\{\vert\vert a\vert\vert_X +t\vert\vert b\vert\vert_Y\:\vert\: x=a+b,\:\left(a,b\right)\in X\times Y\right\}.
\end{equation}
We can see that $\left(\left(X, Y\right)_{\theta,\infty}, \vert\vert\:.\:\vert\vert_{\theta,\infty}\right)$ is a Banach space.
	\end{mydef}
\begin{theo}[Interpolation theorem]
	\label{theoreme_dinterpolation}
	Let $Y_1$ and $Y_2$ be two Banach space continuously embedded into respectively the Banach spaces $X_1$ and $X_2$. If $T\in\mathcal{L}(X_1, X_2)\cap\mathcal{L}(Y_1, Y_2)$, then $T\in\mathcal{L}\left(\left(X_1, Y_1\right)_{\theta,\infty}, \left(X_2, Y_2\right)_{\theta,\infty}\right)$ for any $\theta\in\left(0,1\right)$. Moreover
	\begin{equation}
		\label{interpolation_norm}
		\vert\vert T\vert\vert_{\mathcal{L}\left(\left(X_1, Y_1\right)_{\theta,\infty}, \left(X_2, Y_2\right)_{\theta,\infty}\right)}\le\left(\vert\vert T\vert\vert_{\mathcal{L}(X_1, X_2)}\right)^{1-\theta}\left(\vert\vert T\vert\vert_{\mathcal{L}(Y_1, Y_2)}\right)^{\theta}.
	\end{equation}
\end{theo}
\begin{mydef}
	Let $\theta\in\left[0,1\right]$ and $E$ be an intermediate space between $X$ and $Y$. Then, we say that :
	\begin{itemize}
		\item[$\left(i\right)$] $E$ belongs to the class $J_{\theta}(X, Y)$ if there exists a constant $c>0$ such that:
		\begin{equation}
			\vert\vert x\vert\vert_E\le c\vert\vert x\vert\vert_X^{1-\theta}\vert\vert x\vert\vert_Y^{\theta}\:,\forall x\in Y.
		\end{equation}
	\item[$\left(ii\right)$] $E$ belongs to the class $K_{\theta}(X, Y)$ if there exists a constant $c>0$ such that:
	\begin{equation}
		K\left(t,x, X, Y\right)\le ct^{\theta}\vert\vert x\vert\vert_E,\:\forall x\in E,\:\forall t>0.
	\end{equation}
	\end{itemize}
	The last inequality implies that a Banach space $E$ is of class $K_{\theta}(X, Y)$ if and only if $E$ is embedded continuously into $\left(X, Y\right)_{\theta,\infty}$. We also have that $\left(X, Y\right)_{\theta,\infty}\in J_{\theta}\left(X,Y\right)\cap K_{\theta}\left(X, Y\right)$.

	\begin{theo}[Reiteration theorem]
		Let $0\le \theta_0<\theta_1\le 1$ and $\theta\in\left(0,1\right)$. If $\omega=\left(1-\theta\right)\theta_0+\theta\theta_1$, then:
	\begin{itemize}
		\item[$\left(i\right)$] If $E_i$ is of class $K_{\theta_i}(X, Y)$ $\left(i=0,1\right)$, then
		\begin{equation}
			\left(E_0, E_1\right)_{\theta,\infty}\subset\left(X, Y\right)_{\omega,\infty}.
		\end{equation}
	\item[$\left(ii\right)$] If $E_i$ is of class $J_{\theta_i}(X, Y)$ $\left(i=0,1\right)$, then
	\begin{equation}
		\left(X, Y\right)_{\omega,\infty}\subset\left(E_0, E_1\right)_{\theta,\infty}.
	\end{equation}
	\end{itemize}
	Consequently, if $E_i\in K_{\theta_i}(X, Y)\cap J_{\theta_i}(X, Y)$, then 
	\begin{equation}
		\left(E_0, E_1\right)_{\left(\theta,\infty\right)}=\left(X, Y\right)_{\left(\omega,\infty\right)}.
	\end{equation}
	\end{theo}
\end{mydef}
\end{theo}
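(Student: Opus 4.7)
The plan has two main pieces: uniqueness through a maximum principle argument with the exhaustion function $\phi$, and existence through the parabolic semigroup $T(t)$ generated by $\mathcal{A}$, whose smoothing estimates are then passed to the resolvent and upgraded via interpolation theory.

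For uniqueness, given a bounded $h \in \bigcap_{p\ge 1} W^{2,p}_{\mathrm{loc}}$ with $(\mathcal{A}-\lambda)h = 0$, I would work with the regularized norm $h_{\epsilon} := \sqrt{|h|^2 + \epsilon^2}$. A Bochner-type computation together with Kato's inequality $|\langle \nabla h, h\rangle|^2 \le |\nabla h|^2 |h|^2$ yields $(\mathcal{A}-\lambda)h_{\epsilon} \ge -(\lambda - r)\epsilon$. Subtracting $\phi/k$ then produces a function with an interior maximum (thanks to $\lim_{\infty} \phi = +\infty$); evaluating the inequality there and sending $k\to\infty$, $\epsilon\to 0$ forces $\sup h \le 0$, and the same argument applied to $-h$ gives $h \equiv 0$. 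This is precisely proposition \ref{lunardi_injectivite}.

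For existence, the core step is a Bernstein-type estimate for the Cauchy problem $u_t = \mathcal{A}u$, $u(0) = u_0$. I would differentiate $s(t,x) := |u|^2 + \sum_{i=1}^k \frac{(\alpha t)^i}{i}|\nabla^i u|^2$ for $1 \le k \le 3$ and compute $s_t - \mathcal{A}(s)$. This expression splits into: a good negative term $-|\nabla^{i+1}u|^2$ coming from $\mathcal{A}(s)$ itself; $\nabla^i$-commutators with $\Delta_V$, controlled through lemma \ref{technical_lemma_about_commuting_convariant_derivative} by the assumed bounds on $Rm(g)$, $Rm(g)\ast V$ and $\nabla V$ up to three derivatives; lower-order contributions involving $\nabla^j r$, controlled by the $C^3$-hypothesis on $r$; and a linear $r$-term bounded by $r_0 s$. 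Choosing $\alpha \le 2/(k-1)$ lets the negative term absorb the remaining $|\nabla^{i+1}u|^2$ contributions up to an $O(s)$ remainder, and proposition \ref{principe_du_maximum} applied to an appropriately shifted operator yields $s(t,x) \le e^{\omega t}\|u_0\|_{C^0}^2$. To produce an actual solution on $M$, I would truncate using a cutoff $\psi_s$ built from a Shi-type exhaustion function $F$ with bounded first and second derivatives (theorem 3.6 of \cite{shi97}); the truncated operators $\mathcal{A}_s = \Delta_{\psi_s V} + \psi_s r$ have bounded coefficients, so standard parabolic theory gives classical solutions $u_s$, and because the Bernstein constants depend only on quantities uniform in $s$, Arzela--Ascoli produces a subsequential limit solving the original Cauchy problem.

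This yields a semigroup $T(t)$ on $C^0(M,E)$ with smoothing estimates $\|T(t)\|_{\mathcal{L}(C^s,C^k)} \le Ce^{\omega t}t^{-(k-s)/2}$ for $0\le s \le k \le 3$, together with $\|T(t)\|_{\mathcal{L}(C^0)} \le e^{\lambda_0 t}$. For $\lambda > r_0$, after a standard spectral shift, the resolvent $R(\lambda)H := \int_0^{\infty} e^{-\lambda t}T(t)H\, dt$ converges in operator norm and delivers $h := R(\lambda)H \in D^2_{\mathcal{A}}$ with $\|h\|_{D^2_{\mathcal{A}}} \le C\|H\|_{C^0}$, which gives assertion (1). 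The continuous embedding $D^2_{\mathcal{A}} \hookrightarrow C^{\theta}$ and the Hölder statement (2) then follow by applying theorem \ref{theoreme_dinterpolation} to $T(t)$ between the pairs $(C^0,C^2)$ and $(C^0,C^1)$, and using the Reiteration theorem to identify $C^{\theta}$ with an interpolation space of class $J_{\theta/2}\cap K_{\theta/2}$ between $C^0$ and $C^2$; the interpolated bound on $T(t)$ integrates against $e^{-\lambda t}$ to produce the Schauder estimate $C^{0,\theta}\to C^{2,\theta}$. I expect the main obstacle to be the Bernstein estimate, since this is where every hypothesis must fit together simultaneously and where the choice of $\alpha$ is dictated by the need to absorb $|\nabla^{i+1}u|^2$ against the commutator output. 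A secondary technical point is the truncation: one must check that $\mathcal{A}_s$ inherits condition (\ref{lunardi_condition_phi}) uniformly in $s$, which uses the uniform bounds on $\nabla\psi_s$ and $\nabla^2\psi_s$ furnished by (\ref{properties_of_F}).
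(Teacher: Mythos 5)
Your existence argument matches the paper's proof of this theorem essentially verbatim: truncate $V$ and $r$ by a cutoff $\psi_s = \psi(F/s)$ built from a Shi exhaustion function with bounded $\nabla F$, $\nabla^2 F$, observe that the truncated operators $\mathcal{A}_s$ satisfy condition $(\ref{condition_rm_lunardi})$ uniformly in $s$ so theorem $\ref{theoreme_estimation_lunardi}$ gives $s$-independent $C^k$ bounds, and pass to a subsequential limit via Arzel\`a--Ascoli. The one small misdirection is that the uniqueness you sketch (the $h_\epsilon$-regularization argument) is the \emph{elliptic} uniqueness of proposition $\ref{lunardi_injectivite}$, whereas uniqueness for the parabolic Cauchy problem in this theorem follows directly from the third conclusion of the parabolic maximum principle, proposition $\ref{principe_du_maximum}$, applied to the difference of two bounded solutions.
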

\begin{prop}[proposition 2.8 \cite{smoothing_deruelle}]
	\label{deruelle_espace_interpolation}
	Let $\left(M, g\right)$ be a complete Riemannian manifold with positive injectivity radius and bounded curvature together with its covariant derivatives. Then
	\begin{itemize}
		\item[$\left(i\right)$] For $\theta\in\left(0,1\right)$ and $k\in\mathbb{N}$,
		\begin{equation}
			\left(C^k\left(M, E\right), C^{k+1}\left(M, E\right)\right)_{\theta, \infty}=C^{k,\theta}\left(M, E\right).
		\end{equation}
	\item[$\left(ii\right)$] Let $0\le \theta_1\le\theta_2$ and $0\le\theta\le 1$. Then, if $\omega=\left(1-\theta\right)\theta_1+\theta\theta_2$ is not an integer, 
	\begin{equation}
		\left(C^{\theta_1}\left(M, E\right), C^{\theta_2}\left(M, E\right)\right)_{\theta, \infty}=C^{\omega}\left(M, E\right),
	\end{equation}
such that
\begin{equation*}
	C^{\omega}\left(M, E\right):= C^{\lfloor \omega \rfloor, \omega-\lfloor \omega \rfloor}
\end{equation*}
	$\lfloor \omega \rfloor$ being the integer part of $\omega$.
	\end{itemize}
\end{prop}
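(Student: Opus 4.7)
The strategy is the standard K-method characterization of H\"older spaces by interpolation, adapted to the Riemannian setting using the hypotheses of positive injectivity radius and uniformly bounded curvature derivatives. The key technical ingredient is a family of smoothing operators $\{S_t\}_{t\in(0,1)}$ on sections of $E$ satisfying, for every $u\in C^{k,\theta}(M,E)$, the uniform estimates $\|S_t u\|_{C^{k+1}(M,E)}\lesssim t^{-1}\|u\|_{C^k(M,E)}$ and $\|u-S_t u\|_{C^k(M,E)}\lesssim t^{\theta}[\nabla^k u]_{\theta}$. On Euclidean space these are just convolutions with a standard mollifier; on $(M,g)$ one builds them by fixing a uniformly locally finite atlas of normal coordinate balls of radius less than the injectivity radius, together with a subordinate partition of unity with uniform $C^{\infty}$ bounds, and gluing the local Euclidean convolutions. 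The bounded-geometry hypothesis is exactly what is needed to ensure that the metric components and Christoffel symbols in normal coordinates have uniform bounds, so that Euclidean mollification estimates pass to the manifold with constants independent of the base point.

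With $\{S_t\}$ in place, part $(i)$ is proved by two inclusions. For $C^{k,\theta}\subseteq (C^k,C^{k+1})_{\theta,\infty}$, the decomposition $u=(u-S_t u)+S_t u$ yields
$K(t,u;C^k,C^{k+1})\leq \|u-S_t u\|_{C^k}+t\|S_t u\|_{C^{k+1}}\lesssim t^{\theta}\|u\|_{C^{k,\theta}}$,
which bounds the interpolation norm by the H\"older norm. For the reverse inclusion, I would use only the mean-value theorem along minimizing geodesics: given any decomposition $u=a+b$ with $a\in C^k$ and $b\in C^{k+1}$, parallel transport over distances $d(x,y)<\delta$ gives
$|\nabla^k u(x)-P^*_{x,y}\nabla^k u(y)|\leq 2\|\nabla^k a\|_0+d(x,y)\|\nabla^{k+1}b\|_0$,
and taking the infimum over admissible decompositions produces $[\nabla^k u]_{\theta}\lesssim \|u\|_{(C^k,C^{k+1})_{\theta,\infty}}$; the bounded part of the H\"older norm is controlled in the same way.

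Part $(ii)$ is then deduced from part $(i)$ via the reiteration theorem stated just above. Fix an integer $N>\max(\theta_1,\theta_2,\omega)$. Using the smoothing operators and the interpolation inequality $\|u\|_{C^{\theta_i}}\lesssim \|u\|_{C^0}^{1-\theta_i/N}\|u\|_{C^N}^{\theta_i/N}$, one verifies that $C^{\theta_i}(M,E)$ lies in $J_{\theta_i/N}(C^0,C^N)\cap K_{\theta_i/N}(C^0,C^N)$, whether $\theta_i$ is integer or not, and similarly for $C^{\omega}$. Reiteration then gives
$(C^{\theta_1},C^{\theta_2})_{\theta,\infty}=(C^0,C^N)_{\omega/N,\infty}$.
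To identify the right-hand side with $C^{\omega}$, write $\omega=k+\eta$ with $\eta\in(0,1)$ (which forces $\omega$ non-integer) and apply reiteration a second time to the pair $(C^k,C^{k+1})$, whose interpolation space at level $\eta$ is $C^{\omega}$ by part $(i)$. For integer $\omega$ the identification fails: $(C^0,C^N)_{\omega/N,\infty}$ is the strictly larger Zygmund space, which is why the hypothesis $\omega\notin\mathbb{Z}$ cannot be dropped.

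The main obstacle is the uniform construction of $\{S_t\}$ and the verification that parallel transport, normal coordinates, and local mollification interact compatibly and uniformly across $M$. This is precisely where the bounded-geometry hypothesis is indispensable: it supplies a quantitative atlas of normal coordinate charts in which the metric is $C^{\infty}$-close to the Euclidean one with uniform bounds, and it controls the error between the Euclidean Laplacian/derivatives in a chart and the intrinsic ones, thereby allowing Euclidean estimates on mollifications and on parallel transport to be transferred to $M$. Once this uniform setup is secured, the rest of the proof is standard interpolation theory and reduces to the two-inclusion/reiteration scheme above.
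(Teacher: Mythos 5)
The paper does not prove this proposition: it cites it verbatim as Proposition~2.8 of \cite{smoothing_deruelle}, so there is no internal argument to compare yours against. Your overall strategy---build smoothing operators $S_t$ uniformly across $M$ using bounded geometry, prove $(i)$ by the two-inclusion $K$-method, deduce $(ii)$ by reiteration---is the standard route, and the reverse inclusion in $(i)$ (mean value theorem applied to an arbitrary decomposition $u=a+b$) and the reiteration scheme for $(ii)$ are both sound as written.

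There is, however, a genuine gap in the forward inclusion $C^{k,\theta}\subseteq (C^k,C^{k+1})_{\theta,\infty}$. You postulate $\|S_t u\|_{C^{k+1}}\lesssim t^{-1}\|u\|_{C^k}$ and $\|u-S_t u\|_{C^k}\lesssim t^{\theta}[\nabla^k u]_{\theta}$, and then assert that the decomposition $u=(u-S_t u)+S_t u$ yields $K(t,u)\lesssim t^{\theta}\|u\|_{C^{k,\theta}}$. But combining the two stated estimates only gives
\begin{equation*}
K(t,u)\le \|u-S_t u\|_{C^k}+t\,\|S_t u\|_{C^{k+1}}\lesssim t^{\theta}[\nabla^k u]_{\theta}+\|u\|_{C^k},
\end{equation*}
and the second summand carries no factor of $t^{\theta}$, so $t^{-\theta}K(t,u)$ diverges as $t\to 0$. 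What is actually needed is the sharper bound $\|S_t u\|_{C^{k+1}}\lesssim \|u\|_{C^k}+t^{\theta-1}[\nabla^k u]_{\theta}$, which on $\mathbb{R}^n$ comes from the cancellation $\int\nabla\phi_t=0$, allowing one to write $\nabla^{k+1}(S_t u)(x)=\int\nabla\phi_t(y)\bigl(\nabla^k u(x-y)-\nabla^k u(x)\bigr)\,dy$; on $(M,g)$ one must reproduce this cancellation chartwise, with bounded geometry controlling the comparison between Euclidean and covariant derivatives. With this refinement $K(t,u)\lesssim t\,\|u\|_{C^k}+t^{\theta}[\nabla^k u]_{\theta}\lesssim t^{\theta}\|u\|_{C^{k,\theta}}$ for $t\in(0,1)$ and the argument closes. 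The same refined estimate is needed when you verify the $K_{\theta_i/N}(C^0,C^N)$ class membership in part $(ii)$, so the gap propagates there as well.
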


\subsubsection{Regularity of the solution}
\begin{cor}
	\label{corollaire_estimation_T}
	Using the previous notation (of $C^{\omega}$), we have that
	\begin{equation}
		\label{estimation_operateur_T}
		\vert\vert T(t)\vert\vert_{\mathcal{L}\left(C^{\theta}(\left(M, E\right)), C^{\alpha}\left(M, E\right)\right)}\le \frac{C e^{\omega t}}{t^{\frac{\alpha-\theta}{2}}}\:,\: 0\le\theta\le\alpha\le 3
	\end{equation}
\begin{proof}
	If $\theta$ and $\alpha$ are both integers then the previous inequality is a direct consequence of estimate $\ref{estimation_operateur_lineaire_T_0_k}$. On the other hand, if $\alpha$ is an integer and $\theta$ is not an integer, then: 
	\begin{equation*}
		\vert\vert T(t)\vert\vert_{\mathcal{L}\left(C^{\theta}(\left(M, E\right)), C^{\alpha}\left(M, E\right)\right)}\le \vert\vert T(t)\vert\vert_{\mathcal{L}\left(C^{\lfloor\theta\rfloor}(\left(M, E\right)), C^{\alpha}\left(M, E\right)\right)}
	\end{equation*}
	If $\alpha$ is not an integer, let $0\le k_1\le k_2\le 3$ be two integers and $s\in\left(0,1\right)$ such that $\alpha = \left(1-s\right)k_1 + sk_2$.
	Using proposition $\ref{deruelle_espace_interpolation}$,we have that $C^{\alpha}\left(M, E\right)=\left(C^{k_1}(M,E), C^{k_2}(M,E)\right)_{s,\infty}$ and  $C^{\lfloor\theta\rfloor}\left(M, E\right)=\left(C^{\lfloor\theta\rfloor}(M,E), C^{\lfloor\theta\rfloor}(M,E)\right)_{s,\infty}$. Now, using theorem $\ref{theoreme_dinterpolation}$, we deduce that
	\begin{equation*}
	\vert\vert T\vert\vert_{\mathcal{L}\left(C^{\theta}\left(M, E\right), C^{\alpha}\left(M, E\right)\right)}\le \left(\vert\vert T\vert\vert_{\mathcal{L}(C^{\lfloor\theta\rfloor}(M,E), C^{k_1}(M, E))}\right)^{1-s}\left(\vert\vert T\vert\vert_{\mathcal{L}(C^{\lfloor\theta\rfloor}(M,E), C^{k_2}(M, E))}\right)^s=\frac{Ce^{\omega t}}{t^{\frac{\alpha-\lfloor\theta\rfloor}{2}}}
	\end{equation*}
	Since we could restrict ourselves to $t\in\left(0,1\right)$ (using the semigroup law), this concludes the proof. 
\end{proof}
\end{cor}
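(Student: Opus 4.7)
The corollary extends the estimate $(\ref{estimation_operateur_lineaire_T_0_k})$, which treats only integer orders $0\le s\le k\le 3$, to arbitrary real Hölder exponents $0\le \theta\le\alpha\le 3$. The natural tool is real interpolation, via the interpolation theorem \ref{theoreme_dinterpolation} and the identification of Hölder spaces as interpolation spaces in proposition \ref{deruelle_espace_interpolation}. The plan is first to reduce to $t\in(0,1]$ using the semigroup law, then to remove the integer restrictions on $\alpha$ and $\theta$ by two successive applications of theorem \ref{theoreme_dinterpolation}.

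For the reduction to short time, I would split $T(t)=T(1)\circ T(t-1)$ when $t\ge 1$. The factor $T(1)$ carries the full smoothing from $C^{\lfloor\theta\rfloor}$ to $C^{\lceil\alpha\rceil}$ by $(\ref{estimation_operateur_lineaire_T_0_k})$, while $T(t-1)$ is bounded on every integer $C^k$ with norm at most $Ce^{\omega(t-1)}$ by theorem \ref{theoreme_estimation_lunardi} and estimate $(\ref{estimation_c0_operateur_T})$. Absorbing constants, this leaves exactly the announced bound, so I can assume $t\in(0,1]$ from now on.

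For the interpolation step, if $\alpha\notin\mathbb{Z}$ I would pick adjacent integers $k_1<k_2$ in $\{0,1,2,3\}$ and $s\in(0,1)$ with $\alpha=(1-s)k_1+sk_2$, so that by proposition \ref{deruelle_espace_interpolation} one has $C^{\alpha}(M,E)=(C^{k_1}(M,E),C^{k_2}(M,E))_{s,\infty}$. Applying theorem \ref{theoreme_dinterpolation} with fixed source $C^{m}(M,E)$ for an integer $m$ gives
\begin{equation*}
\vert\vert T(t)\vert\vert_{\mathcal{L}(C^{m},C^{\alpha})}\le \vert\vert T(t)\vert\vert_{\mathcal{L}(C^{m},C^{k_1})}^{1-s}\,\vert\vert T(t)\vert\vert_{\mathcal{L}(C^{m},C^{k_2})}^{s}\le \frac{Ce^{\omega t}}{t^{(\alpha-m)/2}},
\end{equation*}
where the exponent of $t$ comes out as $(1-s)(k_1-m)/2+s(k_2-m)/2=(\alpha-m)/2$. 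If $\theta\notin\mathbb{Z}$, write analogously $\theta=(1-\tau)m_1+\tau m_2$ with $m_1<m_2$ adjacent integers. Interpolating a second time, now on the source side between $C^{m_1}$ and $C^{m_2}$, yields
\begin{equation*}
\vert\vert T(t)\vert\vert_{\mathcal{L}(C^{\theta},C^{\alpha})}\le \vert\vert T(t)\vert\vert_{\mathcal{L}(C^{m_1},C^{\alpha})}^{1-\tau}\,\vert\vert T(t)\vert\vert_{\mathcal{L}(C^{m_2},C^{\alpha})}^{\tau}\le \frac{Ce^{\omega t}}{t^{(\alpha-\theta)/2}},
\end{equation*}
the exponent combining cleanly as $(1-\tau)(\alpha-m_1)/2+\tau(\alpha-m_2)/2=(\alpha-\theta)/2$, which is precisely the claimed rate.

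The main delicate point is the second interpolation on the source side: theorem \ref{theoreme_dinterpolation} as stated applies to a single operator between two interpolation couples, so one has to cast $T(t):C^{\theta}\to C^{\alpha}$ as interpolating between $T(t):C^{m_1}\to C^{\alpha}$ and $T(t):C^{m_2}\to C^{\alpha}$, both with the \emph{same} target $C^{\alpha}$. The cleanest way to justify this is to apply the reiteration theorem so that both $C^{\theta}$ and $C^{\alpha}$ are realized as real interpolation spaces of the common couple $(C^{0}(M,E),C^{3}(M,E))$, whereupon a single application of theorem \ref{theoreme_dinterpolation} combined with $(\ref{estimation_operateur_lineaire_T_0_k})$ at the two integer endpoints gives the estimate with the correct exponent in one step. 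I expect that verifying the two-sided interpolation—rather than just the rougher bound one gets by interpolating only the target and bounding the source by $C^{\lfloor\theta\rfloor}$—will be the principal technical point.
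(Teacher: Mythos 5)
Your proposal is correct and follows the same basic strategy as the paper (interpolating the integer-order estimates \eqref{estimation_operateur_lineaire_T_0_k} via theorem \ref{theoreme_dinterpolation} and proposition \ref{deruelle_espace_interpolation}), but you carry out one step that the paper omits, and that step matters. The paper only interpolates on the target side: for non-integer $\theta$ it simply bounds $\vert\vert T(t)\vert\vert_{\mathcal{L}(C^{\theta},C^{\alpha})}$ by $\vert\vert T(t)\vert\vert_{\mathcal{L}(C^{\lfloor\theta\rfloor},C^{\alpha})}$ and ends with the exponent $(\alpha-\lfloor\theta\rfloor)/2$, which for $t\in(0,1)$ is a strictly \emph{weaker} bound than the stated $(\alpha-\theta)/2$ (and the sharp exponent is what is actually needed later, in the proof of theorem \ref{theoreme_lunardi}, to make $\int_0^{\epsilon}t^{-(\alpha-\theta)/2}\,dt$ produce the factor $\epsilon^{\gamma}$ with $\gamma=1-(\alpha-\theta)/2$). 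Your second interpolation, on the source side between $C^{m_1}$ and $C^{m_2}$ with the fixed target realized as the trivial couple $\left(C^{\alpha},C^{\alpha}\right)_{\tau,\infty}=C^{\alpha}$, recovers the correct exponent $(\alpha-\theta)/2$ and is exactly the mirror image of the trick the paper itself uses on the source side with $\left(C^{\lfloor\theta\rfloor},C^{\lfloor\theta\rfloor}\right)_{s,\infty}$; your displayed computation of the combined exponent is right. One caveat: your closing suggestion to do it "in one step" by realizing both $C^{\theta}$ and $C^{\alpha}$ as interpolation spaces of the single couple $\left(C^{0},C^{3}\right)$ does not quite work with theorem \ref{theoreme_dinterpolation} as stated, since that theorem requires the \emph{same} interpolation parameter on source and target, whereas $C^{\theta}$ and $C^{\alpha}$ correspond to the parameters $\theta/3$ and $\alpha/3$; stick with the two successive applications you already wrote out. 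The reduction to $t\in(0,1]$ via the semigroup law is treated with the same (acceptable) level of informality in both arguments.
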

In section $4$ of \cite{lunardi_estimate} , the author notices that even though the semigroup $T(t)$ is not strongly continuous in $C^0\left(M,E\right)$, we could still define a realization of the operator $\mathcal{A}$ in $C^0\left(M,E\right)$. Let $\lambda$>$\lambda_0$ and $R$ be the following linear operator
\begin{equation*}
	\left(R\left(\lambda\right)u\right)\left(x\right)=\int_{0}^{\infty}e^{-\lambda t}\left(T\left(t\right)u\right)\left(x\right)\: dt, \:\:x\in M
\end{equation*} 
$R$ is well defined (using estimate $\ref{estimation_c0_operateur_T}$). Moreover, $\vert\vert R\left(\lambda\right)\vert\vert_{\mathcal{L}\left(C^0\left(M,E\right)\right)}\le \frac{1}{\lambda-\lambda_0}$.\\ Since $R\left(\lambda\right)\left(u\right)\left(x\right)$ is the Laplace transform of the tensor $t\mapsto T\left(t\right)\left(u\right)\left(x\right)$,
$R$ is injective. Thus, there exists a closed linear operator $A:D\left(A\right)\to C^0\left(M, E\right)$ (infinitesimal generator of $\mathcal{A}$), such that $R\left(\lambda\right)$ is the resolvent of$A$, and $D\left(A\right)=Image\left(R\left(\lambda\right)\right)$. By proposition $4.1$ of \cite{lunardi_estimate}, we have that
\begin{prop}[Lunardi]
	\begin{align*}
		&D\left(A\right)= D^2_{\mathcal{A}}\left(M,E\right)\\
		&A h=\mathcal{A}h,\;\forall h\in D\left(A\right) 
	\end{align*}
Moreover,  for any $\theta\in\left(0,2\right)$, there exists a positive constant $C$ such that
\begin{equation}
	\label{plongement_lunardi}
	\vert\vert h\vert\vert_{C^{\theta}\left(M, E\right)}\le C\vert\vert h\vert\vert^{1-\frac{\theta}{2}}_{C^0\left(M, E\right)}\vert\vert h\vert\vert^{\frac{\theta}{2}}_{D\left(A\right)},\: \forall h\in D\left(A\right),
\end{equation}
where $\vert\vert h\vert\vert_{D\left(A\right)}=\vert\vert h\vert\vert_{C^0\left(M, E\right)}+\vert\vert \mathcal{A}\left(h\right)\vert\vert_{C^0\left(M, E\right)}$.
\end{prop}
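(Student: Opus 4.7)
Plan: The plan is to prove the characterization $D(A) = D^2_{\mathcal{A}}(M, E)$ by a double inclusion and then extract the interpolation estimate from the semigroup bounds of Corollary \ref{corollaire_estimation_T}. For the forward inclusion, given $h \in D(A)$ I would start from $h = R(\lambda) f$ with $f = (\lambda - A) h \in C^0(M, E)$. Since the tensors $T(t) f$ solve the Cauchy problem (\ref{probleme_de_cauchy_lunardi}) classically on $(0, \infty) \times M$ and satisfy the uniform bounds of Theorem \ref{theoreme_estimation_lunardi}, interior Schauder estimates (Theorem \ref{schauder_estimate}) applied to these parabolic solutions together with differentiation under the Laplace-transform integral give $h \in \cap_{p \ge 1} W^{2,p}_{loc}(M, E)$; an integration by parts in the representation $h = \int_0^\infty e^{-\lambda t} T(t) f \, dt$ then yields $\mathcal{A} h - \lambda h = -f$, so $h \in D^2_{\mathcal{A}}(M, E)$ and $A h = \mathcal{A} h$.

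For the reverse inclusion, take $h \in D^2_{\mathcal{A}}(M, E)$ and set $f := \lambda h - \mathcal{A} h \in C^0(M, E)$. Then both $h$ and $R(\lambda) f$ belong to $\cap_{p \ge 1} W^{2,p}_{loc}(M, E)$, are bounded, and satisfy the same equation $(\mathcal{A} - \lambda) u = -f$. Their difference is a bounded tensor annihilated by $\mathcal{A} - \lambda$, so Proposition \ref{lunardi_injectivite} forces $h = R(\lambda) f \in D(A)$, which also confirms $A h = \mathcal{A} h$.

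For the interpolation estimate (\ref{plongement_lunardi}), the key identity to set up is
\[
h - T(a) h = -\int_0^a T(s) (A h)\, ds, \qquad a \in (0, 1],
\]
valid for $h \in D(A)$. Taking the $C^\theta$-norm and using the bounds $\vert\vert T(t)\vert\vert_{\mathcal{L}(C^0, C^\theta)} \le C t^{-\theta/2}$ from Corollary \ref{corollaire_estimation_T} (the exponential factor is harmless on $(0, 1]$) gives, for any $\theta \in (0, 2)$,
\[
\vert\vert h\vert\vert_{C^\theta(M, E)} \le C\, a^{-\theta/2} \vert\vert h\vert\vert_{C^0(M, E)} + C\, a^{1 - \theta/2} \vert\vert A h\vert\vert_{C^0(M, E)}.
\]
Optimizing in $a>0$ balances the two terms and produces the announced inequality $\vert\vert h\vert\vert_{C^\theta} \le C \vert\vert h\vert\vert_{C^0}^{1-\theta/2} \vert\vert h\vert\vert_{D(A)}^{\theta/2}$.

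The main obstacle is that $T(t)$ is not strongly continuous on $C^0(M, E)$, which obstructs a direct application of the standard semigroup identities underlying both the verification that $R(\lambda) f \in D^2_{\mathcal{A}}$ and the derivation of the integral identity above. I would bypass this by returning to the approximating problems with bounded coefficients from the proof of existence of solutions to (\ref{probleme_de_cauchy_lunardi}): there the associated semigroups are strongly continuous, the analogous identities hold classically on dense subspaces, and the uniform $C^k$-bounds of Theorem \ref{theoreme_estimation_lunardi} allow passage to the limit to recover the identities for $T(t)$ itself.
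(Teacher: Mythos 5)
The paper does not actually prove this proposition: it is quoted as Proposition 4.1 of \cite{lunardi_estimate}, so there is no internal argument to compare against. Your sketch reconstructs the standard semigroup-theoretic proof of that cited result, and most of its architecture is sound: the reverse inclusion via the uniqueness statement (Proposition \ref{lunardi_injectivite}) applied to $h-R(\lambda)f$ is exactly right, and the interpolation estimate via $h-T(a)h=-\int_0^aT(s)Ah\,ds$, the bound $\vert\vert T(s)\vert\vert_{\mathcal{L}(C^0,C^{\theta})}\le Cs^{-\theta/2}$, and optimization in $a$ is the correct mechanism (the restriction $\theta<2$ is precisely what makes $\int_0^a s^{-\theta/2}\,ds$ finite). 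You are also right to flag the lack of strong continuity of $T(t)$ on $C^0(M,E)$ and to propose recovering the needed identities from the bounded-coefficient approximations.

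The one step I would not accept as written is the forward inclusion. Differentiation under the Laplace-transform integral cannot produce the second covariant derivatives of $h=\int_0^{\infty}e^{-\lambda t}T(t)f\,dt$, because Theorem \ref{theoreme_estimation_lunardi} only gives $\vert\vert\nabla^2T(t)f\vert\vert_{C^0}\le Ct^{-1}e^{\omega t}\vert\vert f\vert\vert_{C^0}$, and $t^{-1}$ is not integrable at $t=0$; the same singularity obstructs the naive interchange of $\mathcal{A}$ with the integral in your integration by parts. This is not cosmetic: it is exactly why $D(A)$ is characterized by $h,\mathcal{A}h\in C^0$ together with $W^{2,p}_{loc}$ regularity rather than by $h\in C^2$. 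The repair is to first establish $(\mathcal{A}-\lambda)h=-f$ in the sense of distributions (integrating by parts in $t$ against a test section, which uses only $T(t)f$ itself and its locally uniform convergence to $f$ as $t\to0$), and then invoke interior elliptic $L^p$ regularity to conclude $h\in\bigcap_{p\ge1}W^{2,p}_{loc}(M,E)$ and that the equation holds almost everywhere, whence $\mathcal{A}h=\lambda h-f\in C^0(M,E)$. With that modification your plan matches the argument of \cite{lunardi_estimate}.
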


\begin{proof}[Proof of theorem $\ref{theoreme_lunardi}$]
	Equation $(\ref{plongement_lunardi})$ shows that $D^2_{\mathcal{A}}\left(M, E\right)$ is continuously embedded into $C^{\theta}\left(M, E\right)$ for all $\theta\in\left(0,2\right)$ which proves the first part of theorem $\ref{theoreme_lunardi}$.\\
	To prove the second part of theorem $\ref{theoreme_lunardi}$ we proceed as follows. Let $H\in C^{\theta}\left(M, E\right)$ ($\theta\in\left(0,1\right)$) and $\lambda>\lambda_0$. Then
	\begin{equation}
		h\left(x\right):=\int_{0}^{\infty} e^{-\lambda t}\left(T\left(t\right)H\right)\left(x\right)\:dt,
	\end{equation}
	is well defined. Moreover, since $h\in D\left(A\right)$ (because $\left(\lambda-\mathcal{A}\right)\left(h\right)=H$), we have that $h\in C^{\theta}\left(M, E\right)$ (using equation $(\ref{plongement_lunardi})$),
	\begin{equation*}
		\vert\vert h\vert\vert_{C^{\theta}\left(M, E\right)}\le C\vert\vert h\vert\vert^{1-\frac{\theta}{2}}_{C^0\left(M,E\right)}\vert\vert h\vert\vert^{\frac{\theta}{2}}_{D\left(A\right)}\le \frac{C}{{\left(\lambda-\lambda_0\right)}^{1-\frac{\theta}{2}}}\vert\vert H\vert\vert_{C^{\theta}\left(M,E\right)}.
	\end{equation*}
	It remains to prove that $h\in C^{2,\theta}\left(M, E\right)$. By proposition $\ref{deruelle_espace_interpolation}$ we have that \\$C^{2,\theta}\left(M, E\right)=\left(C^{\alpha}\left(M, E\right), C^{2,\alpha}\left(M, E\right)\right)_{\gamma,\infty}$ for $\gamma=1-\frac{\left(\alpha-\theta\right)}{2}$ and $\alpha\in\left(\theta, 1\right)$.\\
	Let $\eta>\omega$ ($\omega$ of corollary $\ref{corollaire_estimation_T}$). Then, $h$ satisfies $\left(\eta-\mathcal{A}\right)h = H+\left(\eta-\lambda \right)h=\tilde{H}$. The latter satisfies:
	\begin{equation*}
		\vert\vert \tilde{H}\vert\vert_{C^{\theta}\left(M, E\right)}\le \left(1+\frac{C\vert \eta-\lambda\vert}{{\left(\lambda-\lambda_0\right)}^{1-\frac{\theta}{2}}}\right)\vert\vert H\vert\vert_{C^{\theta}\left(M,E\right)}.
	\end{equation*}
	Since $\eta>\lambda_0$, we have that
	\begin{equation*}
		h\left(x\right)=\int_{0}^{\infty} e^{-\eta t}\left(T\left(t\right)\tilde{H}\right)\left(x\right)\: dt.
	\end{equation*}
	For every $\epsilon>0$, set
	\begin{equation*}
		a(x)=\int_{0}^{\epsilon}e^{-\eta t}\left(T\left(t\right)\tilde{H}\right)\left(x\right)\: dt \;\text{;}\; b(x)=\int_{\epsilon}^{\infty}e^{-\eta t}\left(T\left(t\right)\tilde{H}\right)\left(x\right)\: dt.
	\end{equation*}
	Then, $h(x)= a(x)+b(x)$. Using estimate $\ref{estimation_operateur_T}$, there exists positive constants $C_1$ and $C_2$ such that
	\begin{align*}
		& \vert\vert a\vert\vert_{C^{\alpha}\left(M, E\right)}\le C_1\epsilon^{\gamma}\vert\vert\tilde{H}\vert\vert_{C^{\theta}\left(M, E\right)}\\
		& \vert\vert b\vert\vert_{C^{2, \alpha}\left(M, E\right)}\le C_2\epsilon^{\gamma-1}\vert\vert\tilde{H}\vert\vert_{C^{\theta}\left(M, E\right)}
	\end{align*}
	Consequently (using the definition of $\vert\vert h\vert\vert_{\gamma,\infty}$), we obtain that: 
	\begin{align*}
		\vert\vert h\vert\vert_{\gamma,\infty} & \le sup_{\epsilon\in\left(0,1\right)}\epsilon^{-\gamma}\left(\vert\vert a\vert\vert_{C^{\alpha}\left(M,E\right)} +\epsilon\vert\vert b\vert\vert_{C^{2, \alpha}\left(M,E\right)}\right)=\left(C_1+C_2\right) \vert\vert\tilde{H}\vert\vert_{C^{\theta}\left(M,E\right))} \\
		& \le  \left(C_1+C_2\right) \left(1+\frac{C\vert \eta-\lambda\vert}{{\left(\lambda-\lambda_0\right)}^{1-\frac{\theta}{2}}}\right) \vert\vert H\vert\vert_{C^{\theta}\left(M,E\right)}.
	\end{align*}
	Since $\vert\vert h\vert\vert_{\gamma,\infty} =\vert\vert h\vert\vert_{C^{2,\theta}(M,E)}$ this concludes the proof.
\end{proof}

	\nocite{*}
	\bibliographystyle{amsplain}
	\bibliography{references}
	
\end{document}